\newtheorem{thm}{Theorem}[section]
\newtheorem*{thmA}{Theorem A}
\newtheorem*{thmB}{Theorem B}  
\newtheorem*{thm-non}{Theorem}
\newtheorem*{exthm}{Theorem: Existence of a renormalization fixed point}
\newtheorem{lem}[thm]{Lemma}
\newtheorem{conj}[thm]{Conjecture}
\theoremstyle{remark}
\newtheorem{rem}{Remark}[section]
\theoremstyle{remark}
\newtheorem{alg}{Algorithm}[section]
\newtheorem{problem}[thm]{Problem}
\theoremstyle{definition}
\numberwithin{equation}{section}
\numberwithin{figure}{section}
\font\nt=cmr7
\def\note#1
\newcommand{\co}{\circ}
\newcommand{\HD}{\operatorname{HD}}
\newcommand{\id}{\operatorname{id}}
\newcommand{\transverse}
 {\kern .7em\makebox[0pt][c]{\raisebox{.2ex}{$|$}}\kern -.6em\cap}
\newcommand{\tangent}
 {\kern .7em\makebox[0pt][c]{\raisebox{.77ex}{$--$}}\kern -.6em\cap}
\newcommand{\eps}{{\varepsilon}}
\newcommand{\cO}{{\mathcal O}}
\newcommand{\cR}{{\mathcal R}}
\newcommand{\cC}{{\mathcal C}}
\newcommand{\cD}{{\mathcal D}}
\newcommand{\cT}{{\mathcal T}}
\newcommand{\cB}{{\mathcal B}}
\newcommand{\cE}{{\mathcal E}}
\newcommand{\cS}{{\mathcal S}}
\newcommand{\cP}{{\mathcal P}}
\newcommand{\cA}{{\mathcal A}}
\newcommand{\cZ}{{\mathcal Z}}
\newcommand{\cN}{{\mathcal N}}
\newcommand{\C}{{\mathbb C}}
\newcommand{\D}{{\mathbb D}}
\newcommand{\R}{{\mathbb R}}
\newcommand{\I}{{\mathbb I}}
\def\BPhi{{\boldsymbol{\BPhi}}}
\def\B0{{\mathbf{0}}}
\newtheorem{Th}{Theorem}
\newtheorem{Prop}[Th]{Proposition}
\newtheorem{Lm}[Th]{Lemma}
\newtheorem{Co}[Th]{Corollary}
\theoremstyle{definition}
\newtheorem{Def}[Th]{Definition}
\newtheorem{Rem}{Remark}
\def\id{\text{Id}}
\newcommand{\ie}{\emph{i.e.}\ }
\newcommand{\hyper}{\mathrm{hyp}}
\def\Empty{}
\newcommand\oplabel[1]{
  \def\OpArg{#1} \ifx \OpArg\Empty {} \else
  	\label{#1}
  \fi}
\newcommand{\comm}[1]{}
\NewDocumentCommand{\ceil}{s O{} m}{%
  \IfBooleanTF{#1} 
    {\left\lceil#3\right\rceil} 
    {#2\lceil#3#2\rceil} 
}
\begin{document}


\bigskip\bigskip

\title[Wild attractors for Fibonacci maps]{Wild attractors for Fibonacci maps\\}

\author{A. Dudko}
\address{Institute of Mathematics\\
  Polish Academy of Sciences\\
  ul. \'Sniadeckich 8\\
  00-656 Warsaw\\
  Poland \\ and B. Verkin Institute for Low Temperature Physics and Engineering \\ 47 Nauky Ave. \\ Kharkiv 61103\\ Ukraine
}
\email[A.~Dudko]{adudko@impan.pl}

\author{D. Gaidashev}


\address{Department of Mathematics\\
  Uppsala University\\
  Box 480 \\
  751 06   Uppsala\\
  Sweden}
\email[D. ~Gaidashev]{gaidash@math.uu.se}

\date{\today}

\begin{abstract}
  Existence of wild attractors - attractors whose basin has a positive Lebesgue measure but is not a residual set - has been one of central themes in one-dimensional dynamics.

It has been demonstrated by H. Bruin {\it et al.} in \cite{BKNS} that Fibonacci maps with a sufficiently flat critical point admit a wild attractor.

We propose a constructive trichotomy that describes possible scenarios for the Lebesgue measure of the Fibonacci attractor based on a computable criterion. We use this criterion, together with a computer-assisted proof of existence of a Fibonacci renormalization $2$-cycle for non-integer critical degrees, to demonstrate that Fibonacci maps do not have a wild attractor when the degree of the critical point is $d=3.8$ (and, conjecturally, for $2< d \le 3.8$), and do admit it when $d=5.1$ (and, conjecturally, for $d \ge 5.1$).
\end{abstract}

\maketitle

\setcounter{tocdepth}{1}
\tableofcontents

\section{Introduction}

\subsection{Wild attractors} One of the key objects in studying dynamical systems is the notion of an \emph{attractor}. Informally speaking, an attractor of a dynamical system is the set attracting "large" number of orbits of this system. More specifically, let $f:X\to X$ be a map, where $X$ is a topological space. For a point $x\in X$ let $\omega(x)$ be the $\omega$-limit set of $x$. For a subset $\cC \subset X$ let $B(\cC)=\{x\in X:\omega(x)\subset \cC\}$ be the basin of attraction of $\cC$. Let $\mu$ be a measure on $X$. A subset $\cC \subset X$ is called a
\begin{itemize}
\item{} \emph{topological attractor} if $B(\cC)$ is a residual set and is minimal with this property,
\item{} \emph{metric attractor} if $B(\cC)$ has positive Lebesgue measure and is minimal with this property.
\end{itemize}
One of possible exotic situations is when metric and topological attractors of a system do not coincide. A metric attractor which is not a topological attractor is called a \emph{wild attractor}. Observe that for a smooth map $f$ with critical points a natural candidate for a metric attractor is the closure of the postcritical set $\cP(f)$ of $f$ (closure of the union of forward orbits of critical points).

The first examples of polynomial maps on a segment having a wild attractor were shown to exist among Fibonacci polynomial maps in \cite{BKNS}. Recall that for any $d\geqslant 2, d\in\mathbb R,$ there exists a unique $c_d\in\mathbb R$ such that the map $P_d(z)=|z|^d+c_d$ has Fibonacci combinatorics of the postcritical set {(see \cite{BKNS} or \cite{LM} for precise definitions)}. Notice that when $d$ is a positive even integer $P_d(z)$ is a polynomial. A Fibonacci map has a unique critical point at the origin. Observe that $P_d$ is non-renormalizable in the classical sense and does not have non-repelling periodic orbits. According to classification of topological attractors \cite{Guck-1979}, it has a unique topological attractor, which is a cycle of segments. Thus, $\overline{\mathcal P(P_d)}=\omega(0)$ cannot be a topological attractor.

\begin{Th}[Bruin-Keller-Novicki-van Strien] There exists $d_0>0$ such that for any $d\geqslant d_0$ the Fibonacci map $|z|^d+c_d$ has a wild attractor.\end{Th}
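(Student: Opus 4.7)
The preceding paragraph already notes that, by Guckenheimer's classification of topological attractors \cite{Guck-1979}, the Fibonacci polynomial $P_d$ admits a unique topological attractor in the form of a cycle of intervals that strictly contains $\omega(0)$. Hence $\omega(0)$ cannot itself be the topological attractor, and to prove the theorem it is enough to show that the Lebesgue measure of the basin $B(\omega(0))$ is positive once $d\geqslant d_0$. The plan is to establish this via the classical principal-nest / random walk argument. First I would construct the canonical Fibonacci principal nest $I_0\supset I_1\supset I_2\supset\dots\ni 0$ of symmetric neighborhoods of the critical point, and decompose the first return map $R_n\colon\Dom(R_n)\subset I_n\to I_n$ into its unimodal central branch, of combinatorial depth equal to the $n$-th Fibonacci number $q_n$ and modeled up to a smooth diffeomorphism on a map of the form $z\mapsto c_n+b_n|z|^d$, together with countably many monotone diffeomorphic non-central branches.

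Next I would introduce the scaling ratios $\lambda_n:=|I_{n+1}|/|I_n|$. Real bounded geometry for Fibonacci combinatorics guarantees that $\lambda_n$ converges, under Fibonacci renormalization, to the scaling $\lambda_*(d)$ of the renormalization fixed point at degree $d$; a crucial consequence of the flat-critical-point hypothesis is that $\lambda_*(d)\to 0$ as $d\to\infty$. I would then encode the dynamics by the sequence of principal-nest levels visited by the orbit: the relevant transition probabilities are read off from the geometry of $R_n$, using that the central branch is modeled on $|z|^d$ (so preimages of small intervals in it scale like a power $1/d$) and that Koebe-type distortion bounds on the monotone non-central branches transport this central-branch estimate into uniform control on the conditional probability of descent from level $n$ to level $n+1$, computed with respect to normalized Lebesgue measure on $I_n$.

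A summability argument then shows that, for $d\geqslant d_0$, the set of points whose orbits eventually lock on to the principal nest --- so that their $\omega$-limit set lies inside $\bigcap_n I_n=\omega(0)$ --- has positive Lebesgue measure; concretely, a sufficient criterion reduces to convergence of a sum comparable to
\[
\sum_{n\geqslant 0}\lambda_n^{1/d},
\]
which follows at once from the geometric convergence $\lambda_n\to\lambda_*(d)<1$ as $d\to\infty$. The main obstacle, and the place where the hypothesis of sufficient flatness of the critical point is essentially used, is the preceding step: securing real a priori bounds for the principal nest which are uniform in $n$ and which persist through all $q_n$ iterations making up the return map $R_n$, so that the central branch retains its universal $|z|^d$-shape and the non-central branches have uniformly bounded Koebe distortion. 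This is the Fibonacci renormalization analysis that the remainder of the present paper re-does in a quantitative, computer-assisted form so as to decide wildness at concrete, finite values of the degree $d$.
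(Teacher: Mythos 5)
The paper does not prove this theorem: it is quoted, with the authorship label, directly from \cite{BKNS}, and the sentence immediately after it (\lq\lq However, the estimate for $d_0$ one can obtain from \cite{BKNS} is very large\rq\rq) makes clear that the whole point of the present paper is to replace that non-effective argument by a computable trichotomy. So there is no \lq\lq paper's own proof\rq\rq\ of this statement to compare your proposal against; what the paper builds instead is the $\eta_n/\zeta_n$ criterion of Theorem A and the computer-assisted estimates of Sections 4--5.

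As a sketch of the \cite{BKNS} strategy your outline has the right ingredients in spirit (principal nest, first-return decomposition, Koebe control of the diffeomorphic branches, random-walk heuristic on the levels), but the quantitative criterion you state does not work. You set $\lambda_n=|I_{n+1}|/|I_n|$ and claim that a sufficient condition is convergence of $\sum_{n\geqslant 0}\lambda_n^{1/d}$, and that this \lq\lq follows at once\rq\rq\ from $\lambda_n\to\lambda_*(d)<1$. But for $d>2$ the Fibonacci renormalization has a bounded-geometry periodic point, so $\lambda_*(d)\in(0,1)$, hence $\lambda_n^{1/d}\to\lambda_*(d)^{1/d}>0$ and your series diverges for every $d$ --- its convergence can never be the criterion. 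If instead you meant the cumulative ratios $|I_n|/|I_0|\sim\lambda_*^n$, then $\sum(\lambda_*^n)^{1/d}$ is a convergent geometric series for every $d>2$, so it still fails to discriminate the large-$d$ regime. The step that is actually decisive --- and that your sketch elides --- is a comparison between two competing positive quantities at each level: the mass that descends (analogous to $\eta_n$ in this paper) versus the mass that escapes $T_n$ never to return (analogous to $\zeta_n$), with the flatness of the $|z|^d$ branch tilting this comparison towards descent as $d$ grows. This is exactly what Theorem A's criterion $\eta_n/\zeta_n>C$ packages, and it is intrinsically a two-sided estimate; no one-sided summability statement in the scaling ratios alone can replace it. Finally, the claim that $\lambda_*(d)\to 0$ as $d\to\infty$ is not obvious and would itself need a proof; \cite{BKNS} rely on a drift estimate for the induced random walk, not on degeneration of the fixed-point geometry.
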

\noindent However, the estimate for $d_0$ one can obtain from \cite{BKNS} is very large. Results of the present paper {suggest} that already $P_{5.1}(z)$ has a wild attractor, however $P_{3.8}(z)$ does not have one.

{We notice that other classes of maps with Fibonacci type combinatorics were studied, for instance, in \cite{Bruin-98} and \cite{BruinTodd-15}}.

\subsection{Renormalization for Fibonacci maps}
{The Fibonacci polynomials $P_d$ are not renormalizable in the usual sense. However, one can study the dynamics of $P_d$ using an appropriate generalized renormalization introduced in \cite{LM}.}

M. Lyubich and J. Milnor {study} in \cite{LM} a class of $C^2$ maps, which differ from a usual dynamical system in that their domain, ${\rm Dom}(F)$ is a proper subset of the range, ${\rm R}(F)=[-1,1]$. Specifically, ${\rm Dom}(F)=J_1 \cup T_1$ with $J_1=[a,b]$ and $T_1=[\alpha,\beta]$ where $-1 <a <b<\alpha < \beta<1$. This class of maps satisfies the following conditions:

\begin{itemize}
\item[$(i)$] $F |_{J_1}$ is a diffeomorphism from $J_1$ onto $[-1,1]$, which may be either orientation preserving, or orientation reversing;
\item[$(ii)$] $F |_{T_1}$ is a unimodal map from  $T_1$ onto $[-1,1]$, with a non-degenerate minimum $x_0=0$ and with $F(\partial aT_1)=1$.
\end{itemize}

The class $\cA$ of such maps is union of two disjoint components $\cA^+$ and $\cA^-$, for which $F_{J_1}$ is orientation preserving or reversing, respectively.

Under the condition $F(x_0) \in J_1$ and $F^2(x_0) \in T_1$, there exists an interval $T_2 \ni x_0$, mapped unimodally into $T_1$ by $F^2$ with both endpoints of $T_2$ mapping to a single endpoint of $T_1$. Also, there is an interval $J_2 \subset T_1$, $J_2 \cap T_2 = \emptyset$, such that $F |_{J_2}$ is a diffeomorphism onto $T_1$. There always two choices of such $J_2$, and one chooses $J_2$ to the left of $T_2$ if $F|_{J_1}$ is orinetation preserving, to the right of $T_2$ otherwise.

\begin{figure}[t]
\begin{center}
 \includegraphics[angle=-90,width=0.8 \textwidth]{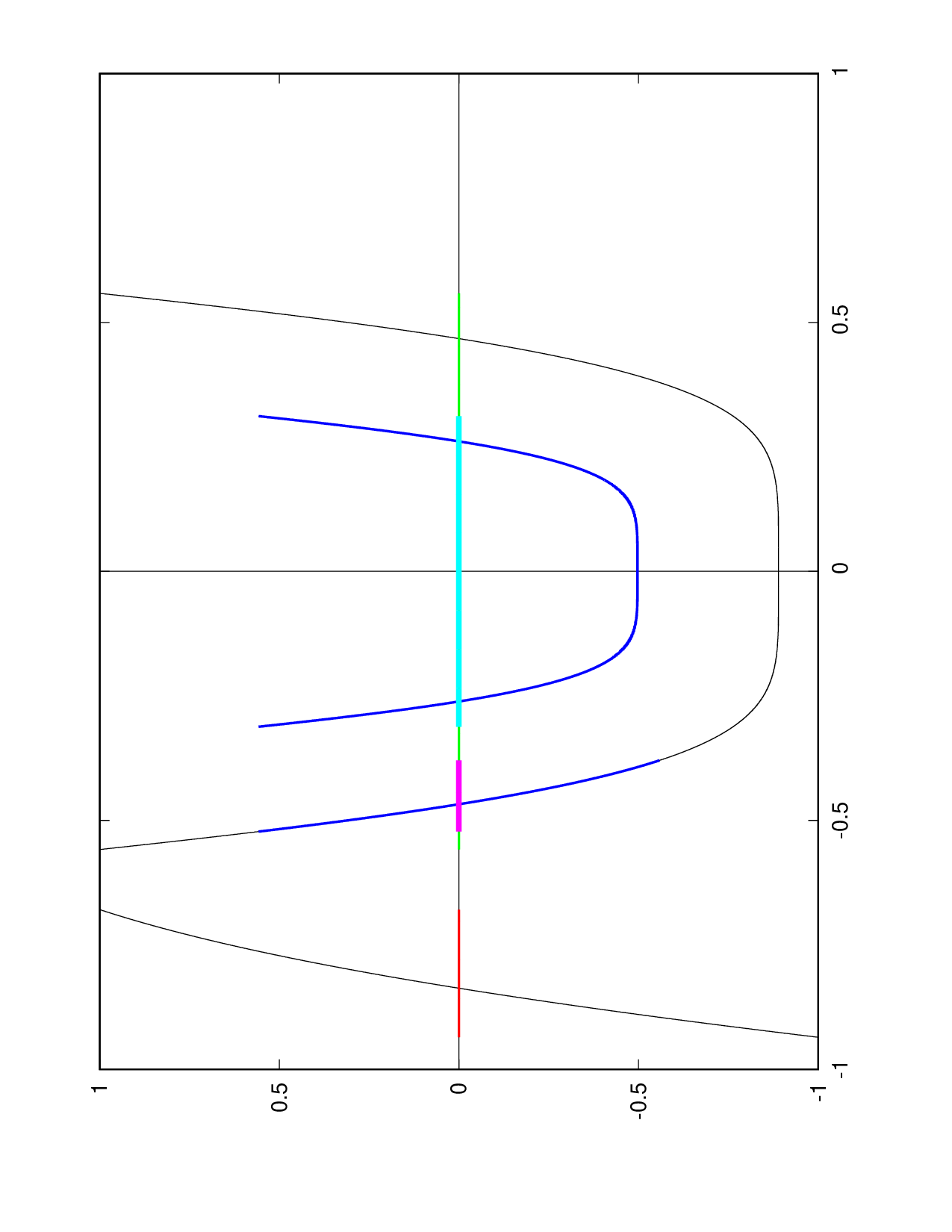}
\end{center}
\caption{A renormalizable Fibonacci map of degree $d=5.0$. The original map is defined over the intervals \textcolor{red}{$J_1$} and  \textcolor{green}{$T_1$}. Its prerenormalization, in blue, is defined over the intervals \textcolor{magenta}{$J_2$}  $\subset$ \textcolor{green}{$T_1$} and  \textcolor{cyan}{$T_2$} $\subset$ \textcolor{green}{$T_1$}. }
\end{figure}

Such map from $J_2 \cup T_2$ onto $T_1$ is referred to as the prerenormalization of $F$, denoted $P F$ or $F_1$. $P F$ rescaled by the affine transformation $s_{T_1}$ that map $T_1$ onto $[-1,1]$, is called the renormalization of $F$, denoted $R F$. Out of two possible choices of $s_{T_1}$ (orientation preserving and orientation reversing), we always choose the one for which $x_0$ is a minimum of $R F$. If $F$ is $n$-times renormalizable, its $n$-th prerenormalization will be denoted $F_n$.

The operator $R$ interchanges the components $\cA^\pm$.

M. Lyubich and J. Milnor have demonstrated in  \cite{LM} that a map $F \in \cA$ is infinitely renormalizable iff it is a Fibonacci map. Furthermore, a they also show that the restriction of a Fibonacci polynomial $P_2$ to the union of segments $[P_2^5(0),P_2^2(0)]$ and $[P_2(0),P_2^4(0)]$ can be extended to a smooth map of class $\mathcal A^-$. Similar is true for the Fibonacci polynomial $P_d$ with $d\geqslant 2$.

The authors of \cite{LM} also consider analytic Fibonacci maps in the Epstein class $\cE$, that is $F \in \cA$, such that $\left(F|_{T_1}\right)^{-1}:[0,1] \mapsto [0,1]$ extends analytically to a map from $\C_{[0,1]}$ to itself, and  $\left(F|_{J_1}\right)^{-1}:[-1,1] \mapsto J_1$ extends analytically to a map from $\C_{[-1,1]}$ to $\C_{J_1}$, where, given a closed interval $I$,
$$\C_I=(\C \setminus \R) \cup I.$$
For such maps, it is shown that their renormalizations eventually become polynomial-like of type $(d,1)$, that is, for sufficiently large $n$, the domain of the analytic extention of $R^n F$ contains two topolgical disks $U_{T_1}$ and $U_{J_1}$, $U_{T_1} \subset V$ and $U_{J_1} \subset V$, such that $F$ maps  $U_{T_1} \cup U_{J_1}$ onto $V$ and $F|_{U_{T_1}}$ is a branched covering of degree $d$, while  $F|_{U_{J_1}}$ is a covering of degree $1$.

The renormalization theory for the Fibonacci maps developped in \cite{LM} has been eventualy used by the authors to show that for $d \le 2$, the geometry of the closure of the critical orbit $\bar \cO$ (a Cantor set) is unbounded, specifically, if $I_n$ is an interval in the $n$-th level cover of the Cantor set $\bar \cO$, and $I_{n+1}' \subset I_n$ is that in the $n+1$-st, then $|I'_{n+1}|/|I_n| \sim a/2^{n/3}$ for some constant $a$, and, consequently, the Hausdorff dimension of $\bar \cO$ is zero. Any two Fibonacci maps with a non-degenerate critical point ($d \le 2$) with the same parameter $a$ are smoothly conjugate on $\bar \cO$.

{We would like to emphasize that in the present paper we study smooth Fibonacci maps on unions of segments. In several papers, complex analytic generalized polynomial-like Fibonacci maps were studied. In particular,} S. van Strien and T. Nowicki have proved in \cite{nowicki1994polynomial} that the renormalization operator for generalized polynomial-like Fibonacci maps, with $d$ an even natural larger than $2$, has a unique periodic orbit of period $2$. {Its geometric properties were studied in \cite{Buff-00}.} This renormalization period $2$ orbit has been shown to be hyperbolic by D. Smania in \cite{Sma}, it is also shown in \cite{Sma} that sufficiently high renormalizations of the infinitely renormalizable Fibonacci map belong to the local stable renormalization manifold at the period $2$ orbit.

{In the context of smooth Fibonacci maps, it is believed that the following is true:
\begin{conj}\label{conj_existence} For every $d>2$ there exists a unique period 2 periodic  point of the generalized renormalization with the Fibonacci combinatorics and critical point of degree $d$.
\end{conj}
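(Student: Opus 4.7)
The plan is to reduce the period-$2$ existence and uniqueness question to a fixed point problem for the square $R^2$ of the renormalization operator on an appropriate Banach space of analytic Fibonacci maps, then combine rigorous computer-assisted validation at discretely chosen parameter values with a continuation argument sweeping the full range $d>2$.

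First I would set the stage in the Epstein class $\cE \cap \cA$ discussed above, specifically in a Banach space of functions analytic on a fixed domain $\Om \supset [-1,1]$ respecting the two-branched structure of the prerenormalization (one univalent inverse branch onto $J_1$ and one degree-$d$ branch onto $T_1$). Within this space $R$ is defined wherever the Fibonacci combinatorial constraints $F(0)\in J_1$, $F^2(0)\in T_1$ hold, and $R^2$ preserves each component $\cA^\pm$. A Fibonacci period-$2$ orbit is then a fixed point of $R^2$ in $\cA^-$ (equivalently in $\cA^+$). The critical exponent $d$ enters through the singular factor $|z|^d$ of the critical branch; factoring this out and working with the smooth conjugator yields a real-analytic family $R^2_d$ of operators in the parameter~$d$.

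Second, for existence at a given $d$ one would produce a very accurate numerical approximation $F_0$ to a candidate fixed point and apply a Newton--Kantorovich or Krawczyk-style test in interval arithmetic: bound $\|R^2_d(F_0)-F_0\|$, bound the norm of $(I-DR^2_d(F_0))^{-1}$ on a small ball around $F_0$, and conclude that a true fixed point exists nearby. This is the computer-assisted step the paper promises for $d=3.8$ and $d=5.1$. To lift finitely many validated samples to all of $(2,\infty)$, I would combine these validations at a discrete grid of $d$-values with the implicit function theorem applied to $R^2_d(F)-F=0$, using hyperbolicity of the fixed point (no eigenvalue of $DR^2_d$ equal to $1$) as the nondegeneracy hypothesis. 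Smania's theorem \cite{Sma} supplies hyperbolicity for even integers, and an analogous computer-assisted spectral estimate at each sampled $d$ would propagate the fixed point continuously between samples. Uniqueness would follow from the same hyperbolicity combined with a global Fibonacci rigidity argument: the stable manifold of the $2$-cycle should have codimension one in $\cA^\pm$ and contain every infinitely Fibonacci-renormalizable map of degree $d$, so a second period-$2$ orbit would yield a second infinitely renormalizable Fibonacci germ of the same combinatorial type and degree, contradicting the classification of \cite{LM}.

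The hard part will be bridging finitely many validated parameter values to the entire half-line $d>2$. Interval arithmetic controls only neighborhoods of fixed $d$, so making these neighborhoods overlap requires uniform Lipschitz control of $R^2_d$ and $DR^2_d$ in $d$, both as $d\downarrow 2$, where the critical point becomes non-rigid and the Cantor geometry degenerates (recall the unbounded ratios $|I'_{n+1}|/|I_n|\sim a/2^{n/3}$ at $d=2$), and as $d\to\infty$, where extreme flatness inflates operator norms and shrinks the polynomial-like domains. A realistic resolution is to partition $(2,\infty)$ into overlapping windows, each handled in its own tailored Banach space with a window-specific validation, and to treat the endpoint $d\to 2^+$ by a separate singular-perturbation analysis rather than by naive interval enclosure.
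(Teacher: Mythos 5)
The statement you have been asked to prove is in fact stated in the paper as a \emph{conjecture}, not a theorem: the authors explicitly write that ``a demonstration of existence, uniqueness and hyperbolicity of a Fibonacci cycle for all $d>2$ is not an objective of this paper,'' and in the Open Problems section they reiterate that ``Conjecture \ref{conj_existence} remains open.'' What the paper actually establishes, via the Contraction Mapping Principle in Section \ref{real_ren}, is existence (and hyperbolicity) of the renormalization $2$-cycle only at the two isolated values $d=3.8$ and $d=5.1$. So there is no ``paper's own proof'' for you to match; the most your proposal can be is a programme.

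As a programme, the skeleton is reasonable and broadly parallels what the paper does locally: set up $R^2$ as a fixed-point problem in a Banach space of analytic maps, validate an approximate fixed point with a Newton--Kantorovich/Krawczyk test in interval arithmetic, and use hyperbolicity to continue. But two steps you rely on are genuine gaps, not just technical chores. First, continuation from a finite grid of validated $d$'s to all of $(2,\infty)$ requires either infinitely many validations or an a priori uniform modulus of continuity of $R^2_d$ and $DR^2_d$ in $d$, together with uniform spectral gaps; you correctly flag that both ends ($d\downarrow 2$ where the Cantor geometry degenerates with unbounded scaling, and $d\to\infty$ where flatness blows up norms and shrinks polynomial-like domains) resist uniform interval enclosure, but you do not resolve this, and no such resolution is known. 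Second, your uniqueness argument invokes a ``global Fibonacci rigidity'' to the effect that every infinitely Fibonacci-renormalizable germ of degree $d$ lies on the stable manifold of the same $2$-cycle; that is itself an unproven rigidity statement (Smania's result in \cite{Sma} gives a local stable manifold for sufficiently high renormalizations at even integer degrees, not a global statement for all real $d>2$), so uniqueness does not follow. In short, your sketch identifies the right obstacles but does not overcome them, which is precisely why the statement is a conjecture and why the paper only proves the two specific cases needed for Theorem B.
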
}
\noindent Although a demonstration of existence, uniqueness and hyperbolicity of a Fibonacci cycle for all $d>2$ is not an objective of this paper, we do obtain existence and hyperbolicity for several instances of $d$, specifically, $d=3.8$ and $d=5.1$, used in our proofs, using computer-assisted methods. { This can be viewed as a step towards proving Conjecture \ref{conj_existence}.} An analytic proof of existence and hyperbolicity of a Fibonacci cycle for general $d$'s deserves, however, a separate study.

\subsection{Avila-Lyubich trichotomy for Fibonacci maps} A natural question is how large is the set attracted by an attractor $A$. In \cite{Avila_Lyubich_08} and \cite{AvilaLyubich-Area-22} A. Avila and M. Lyubich developed methods for studying questions about Hausdorff dimension and Lebesgue measure of attractors of infinitely-renormalizable maps. Though they focused on Feigebnaum maps, they obtained a series of results for Fibonacci maps.

In Section 10 of \cite{Avila_Lyubich_08}, they showed for the basin of attraction $J^r(f)=\mathcal B(\overline{\mathcal P(f)})$ of the postcriticall set of a Fibonacci map $f$ with critical point of degree $d\in \mathbb R$:
\begin{Th}[Avila-Lyubich]\label{thm_Aliva-Lyubich} Assume that Conjecture \ref{conj_existence} is true. Then the following cases for $J^r(f)$ occur:
\begin{itemize}
\item[$(1)$] (Lean case) For an open set of $d$'s containing $(2,2+\epsilon)$ for some $\epsilon>0$ one has $\HD_\hyper(J^r(f))<1$.
\item[$2)$] (Balanced case) For a compact non-empty set of $d$'s, $\HD_\hyper(J^r(f))=\HD(J^r(f))=1$ but the Lebesgue measure of $J^r(f)$ is zero.
\item[$3)$] (Black hole case) For an open set of $d$'s containing $(K,\infty)$ for some $K>2$, $\HD_\hyper(J^r(f))<1$ but $J^r(f)$ has positive Lebesgue measure.
\end{itemize}
\end{Th}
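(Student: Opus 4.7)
The plan is to exploit the renormalization $2$-cycle $F_*$ provided by Conjecture~\ref{conj_existence} to extract a self-similar scaling structure on $J^r(f)$, and to translate this structure via thermodynamic formalism into quantitative information about $\HD_\hyper(J^r(f))$ and the Lebesgue measure of $J^r(f)$. I would begin by linearizing $R^2$ at $F_*$ and isolating the two relevant scaling eigenvalues: a spatial ratio $\lambda=\lambda(d)$ coming from the affine rescaling $s_{T_1}$, and a ``return'' ratio $\mu=\mu(d)$ encoding the first-return geometry of $F_*$ on $T_1\setminus T_2$. Hyperbolicity of the $2$-cycle (Smania) together with smoothness of the $d$-dependence would give that $\lambda$, $\mu$, and all quantities derived from them depend real-analytically on $d$.

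Next, I would realize $J^r(f)$ as a Hofbauer-type Markov tower built from the principal nest around $0$ together with the diffeomorphic branches defined away from $0$. The hyperbolic part of $J^r(f)$ is the Cantor repeller built from the finite branches of this tower, and a Bowen-type pressure equation
\[
P_{\mathrm{tower}}(-s\log|DF|)=0
\]
has a unique solution $s=s(d)$ equal to $\HD_\hyper(J^r(f))$. A separate criterion, in the spirit of the random-walk analysis of \cite{BKNS}, would characterise positive Lebesgue measure of $J^r(f)$: the condition is that the spatial Lyapunov exponent $\chi(d)$ of the natural $F_*$-invariant measure on the symbolic tower be negative, i.e.\ that the contracting central branch win on average against the expanding lateral branches.

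I would then analyse $s(d)$ and $\chi(d)$ along the parameter axis. Near $d=2$, Koebe-type distortion bounds for almost-quadratic critical points give $s(d)<1$ and $\chi(d)>0$, which is the lean regime. For $d$ large, the central branch is very flat, so $\chi(d)<0$; the repelling dimension $s(d)$ nevertheless remains strictly below $1$, yielding the black-hole regime. Real-analyticity of the pair $(s,\chi)$ in $d$ gives openness of these two regimes, and an intermediate-value argument produces a non-empty compact set on which $s(d)=1$ while the Lebesgue measure remains zero; this is the balanced case, in which $\HD(J^r(f))=\HD_\hyper(J^r(f))=1$.

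The main obstacle will be the rigorous construction of the Markov tower and the passage from the scaling exponents at $F_*$ to the metric invariants of the actual map $f$: the Bowen equation requires distortion control along arbitrarily deep cascades of renormalizations, and the equivalence ``$|J^r(f)|>0 \Longleftrightarrow \chi(d)<0$'' demands quasi-invariance estimates for Lebesgue measure under the cascade, analogous to those in \cite{BKNS}. Once these analytic inputs are in place, the trichotomy reduces to the elementary continuity and intermediate-value arguments above.
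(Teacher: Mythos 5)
The paper does not prove this theorem; it quotes it verbatim as a result of Avila and Lyubich from Section~10 of \cite{Avila_Lyubich_08} (and \cite{AvilaLyubich-Area-22}), and then uses it as background for the constructive Theorem~A, which concerns only the measure-theoretic part of the trichotomy. So there is no internal proof in this paper to compare against; you are attempting to reprove a cited result.

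On its own merits, your sketch takes a genuinely different route from Avila--Lyubich. They work with conformal measures of the renormalization periodic orbit and a recursive ``quadratic'' estimate on the quantities $\eta_n$ (probability of reaching level $n$) and $\zeta_n$ (escape rate) --- exactly the machinery that Theorem~\ref{RecursiveThm} reproduces in this paper --- and then characterize the three regimes by the asymptotics of the ratio $\eta_n/\zeta_n$: lean when it tends to zero, balanced when bounded, black hole when it tends to infinity. Your proposal instead builds a Hofbauer-type Markov tower, solves a Bowen pressure equation $P(-s\log|DF|)=0$ for the hyperbolic dimension, and decides the measure alternative via a drift/Lyapunov-exponent criterion $\chi(d)\lessgtr 0$ in the spirit of \cite{BKNS}. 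This is a legitimate and natural alternative framework, closer in flavor to \cite{BKNS} than to \cite{Avila_Lyubich_08}.

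There is, however, a real gap in the final trichotomy argument. You assert $s(d)<1$ near $d=2$ and $s(d)<1$ for $d$ large, and then invoke an ``intermediate-value argument'' to produce a non-empty compact set where $s(d)=1$. Continuity of $s$ cannot force $s(d)=1$ anywhere if $s<1$ at both ends; you would have to establish separately that $s$ actually reaches $1$ on the interface between the lean and black-hole regimes. In the Avila--Lyubich scheme this is structural: the lean/balanced/black-hole cases are mutually exclusive and exhaust the parameter range, with the balanced condition (bounded ratio) forcing $\HD_\hyper=1$ and the black-hole condition (ratio $\to\infty$) simultaneously forcing $\HD_\hyper<1$ and positive Lebesgue measure. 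Your proposal does not encode the mutual exclusivity, nor the implication ``positive measure $\Rightarrow\HD_\hyper<1$'', which is a nontrivial part of the black-hole analysis and does not follow from $\chi<0$ alone. You would also need to justify the equivalence ``$|J^r(f)|>0\iff\chi(d)<0$'': \cite{BKNS} gives sufficiency of a drift-type condition for positive measure, not the converse, and the zero-measure conclusion in the lean case in this setting really rests on the $\eta_n/\zeta_n$ recursion rather than on a Lyapunov-exponent sign.
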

See \cite{Lyubich-ICM-95} for the early history of the question.

\subsection{Acknowledgement}
The first author acknowledges the funding by Long-term program of support of the Ukrainian research teams at the Polish Academy of Sciences carried out in collaboration with the U.S. National Academy of Sciences with the financial support of external partners. The first author was partially supported by National Science
Centre, Poland, Grant OPUS21 "Holomorphic dynamics, fractals, thermodynamic formalism",
2021/41/B/ST1/00461.

The authors are grateful to Mikhail Lyubich for motivating the study of Fibonacci attractors and useful comments.

\section{Trichotomy for attractors of Fibonacci maps: Statement of the main result}
Throughout this section let $d>2$ be a fixed number. Let $\{F,G\}=\{(f,g),(f,-g)\}$ be the period two periodic cycle of Fibonacci renormalization with a critical point of degree $d$. Here $f:T_1=[-\lambda,\lambda]\to T_0=[-1,1]$ is an even unimodal map with $f(t)=1$, and $g:J_1=[i,j] \to T_0$ is an increasing homeomorphism, $i<f(0)<j$.

In what follows, we will introduce most of our definitions and results for the periodic point $F$. Clearly, all statements have corresponding counterparts for the periodic point $G$.

For $n \geqslant 0$, let $T_{n+1}$ and $J_{n+1}$ be the domains of the n-th prerenormalization $F_n=(f_n,g_n)$ of $F$, so that
\begin{align*}
  T_{n+1}&=\lambda^n T_1,  \\
  J_{n+1}&=\pm\lambda^n J_1,\\
  f_n(z)& =\pm\lambda^n f(z/\lambda^n), \\
  g_n(z)& =\pm\lambda^n g(\pm z/\lambda^n),
\end{align*}
where $\lambda$ is the scaling ratio (depending on $d$). Denote by
\begin{equation*}
   \cP =\{F^k(0):k\in\mathbb Z_+\}
\end{equation*}
the postcritical set of $F$, and
\begin{equation*}
   \cC =\overline{\cP},
\end{equation*}
the attractor of $F$. Introduce the sets
\begin{align}
\label{Xn}  X_n&=\{x\in T_1:F^k(x)\in T_n\;\;\text{for some}\;\;k \ge 0\},\\
\label{Yn}  Y_n&=\{y\in T_n:F^k(x)\notin T_n\;\;\text{for all}\;\; k \ge 1 \},\\
\label{Zn}  Z_n&=\left(\bigcup\limits_{k\geqslant 0}F_{n-1}^{-k}(Y_n)\right)\cap T_n
\end{align} and the quantities
$$\eta_n=|X_n|/|T_1|,\;\;\zeta_n=|Z_n|/|T_n|, $$ where $|\cdot|$ stands for the Lebesgue measure of a set. 
Denote by $B( \cC)$ the Basin of attraction of $ \cC$.
One of the main results of this paper is a trichotomy for wild attractors of Fibonacci maps,  similar to Avila-Lyubich's trichotomy for Feigenbaum Julia sets:

\begin{thmA}{\it (\underline{Constructive trichotomy for Fibonacci maps}.)}
There exists a constructive constant $C>1$ (depending on $d$), such that one of the following holds:
\begin{itemize}
\item[$(1)$] There exists $n\in\mathbb N$ such that $\eta_n/\zeta_n<C^{-1}$. Then $\eta_n\to 0$ exponentially fast, $\inf\zeta_n>0$, and $F$ does not have a wild attractor ($B( \cC)$ has zero Lebesgue measure).
\item[$(2)$] For all $n$ one has $C^{-1}\leqslant \eta_n/\zeta_n\leqslant C$. Then $\eta_n\to 0$, $\zeta_n\to 0$, and $F$ does not have a wild attractor ($B( \cC)$ has zero Lebesgue measure).
\item[$(3)$] There exists $n\in\mathbb N$ such that $\eta_n/\zeta_n>C$. Then $\inf\eta_n>0$, $\zeta_n\to 0$, and $F$ has a wild attractor ($B( \cC)$ has positive Lebesgue measure).
  \end{itemize}
\end{thmA}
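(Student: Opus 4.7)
The plan is to prove the trichotomy by establishing a controlled recursion between the pairs $(\eta_n,\zeta_n)$ and $(\eta_{n+1},\zeta_{n+1})$, then analyzing the orbit of the ratio $\rho_n := \eta_n/\zeta_n$ under this recursion. The inputs from renormalization theory are (i) the affine self-similarity encoded by the scaling relations $T_{n+1} = \lambda^n T_1$ and $F_n(z) = \pm\lambda^n F(z/\lambda^n)$, and (ii) uniform real bounds along the tower $\{F_n\}$, which follow from the hyperbolicity of the Fibonacci period-$2$ cycle (available for the relevant $d$ by the computer-assisted results of this paper).

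First I would decompose $T_n$ according to the prerenormalization $F_{n-1}$, which plays the role of the first-return map of $F$ to $T_n \cup J_n$: a central subinterval $T_{n+1}$ (points that go one level deeper before returning), a piece $F_{n-1}^{-1}(J_{n+1}) \cap T_n$ (points whose first return passes through the monotone branch on $J_{n+1}$), and a complementary escape gap whose $F_{n-1}$-image leaves $T_n$ for good (these, after saturation, are $Y_n$). Pulling back iteratively through $F_{n-1}$, the interval $T_n$ splits as the disjoint union of $T_n \setminus Z_n$ (the recurrent set, containing $\cC$) and $Z_n$ (the escape set). The renormalization conjugacy identifying $F_{n-1}$ with one of $F,G$ depending on the parity of $n$, combined with uniform distortion of the first-entry maps from $T_1$ into these pieces, converts this geometric decomposition into a two-sided recursion of the schematic form
\[
  \eta_{n+1} \asymp_C (1 - \zeta_n)\,\eta_n,
\]
together with a parallel controlled recursion $\zeta_{n+1} \asymp_C \Phi(\zeta_n)$, where $\Phi$ is an explicit function determined by the geometry of the cycle with $\Phi(0) = 0$, and $\asymp_C$ denotes equality up to the multiplicative factor $C = C(d) > 1$.

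With this recursion in place, the trichotomy reduces to a monotonicity analysis of $\rho_n$. If $\rho_n < C^{-1}$ at some step, then $\zeta_n$ is sufficiently large to force $\eta_{n+1} \leq q\,\eta_n$ for a uniform $q < 1$ while $\zeta_{n+1}$ stays bounded below; the inequality $\rho_{n+1} < C^{-1}$ propagates by induction, yielding case (1). Symmetrically, if $\rho_n > C$ at some step, the resulting upper bound on $\zeta_n$ keeps the factor $(1 - \zeta_n)$ close enough to $1$ that $\eta_n$ remains bounded away from zero, while $\Phi(\zeta_n)$ contracts to drive $\zeta_n \to 0$ geometrically; this is case (3). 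The remaining regime, $\rho_n \in [C^{-1}, C]$ for all $n$, forces both sequences to zero simultaneously while trapping their ratio; this is case (2). The conversion to the Lebesgue measure of $B(\cC)$ then follows from the near-product formula
\[
  |B(\cC) \cap T_1| \asymp |T_1|\cdot \lim_{n\to\infty} \eta_n \prod_{k=1}^{n}(1 - \zeta_k),
\]
valid up to bounded distortion, which vanishes in cases (1) and (2) since $\eta_n \to 0$ and is positive in case (3) since $\inf \eta_n > 0$ and $\sum_k \zeta_k < \infty$.

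The main obstacle is converting the qualitative renormalization self-similarity into the explicit computable constant $C$ in the recursion. This requires uniform real bounds along the entire forward orbit $\{F_n\}$ under renormalization, which rest on the hyperbolicity of the Fibonacci period-$2$ cycle established in the paper only computer-assistedly and only for specific $d$. A secondary subtlety is reconciling the $F$-dynamical definition of $Y_n$ with the $F_{n-1}$-dynamical construction of $Z_n$: the identification of $F_{n-1}$ with the first-return map of $F$ to $T_n \cup J_n$ is the key bridge, and carrying the relevant distortion estimates across this identification is where the bulk of the technical work lies.
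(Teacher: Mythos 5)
Your proposed recursion is structurally weaker than the one the paper actually establishes, and the subsequent analysis relies on steps that do not follow from it. The paper's key recursion (Theorem \ref{RecursiveThm}) is genuinely \emph{two-parameter}:
\[
\frac{\eta_n\eta_{m+1}}{\eta_{m+1}+C\zeta_{n,m}}\leqslant \eta_{n+m}\leqslant \frac{\eta_n\eta_{m+1}}{\eta_{m+1}+C^{-1}\zeta_{n,m}},
\]
relating levels $n$, $m+1$, $n+m$. Your one-step recursion $\eta_{n+1}\asymp_C(1-\zeta_n)\eta_n$ is not what comes out of this: setting $m=1$ gives $\eta_{n+1}/\eta_n\asymp\eta_2/(\eta_2+\zeta_{n,1})$, which involves the fixed constant $\eta_2$ and the truncated quantity $\zeta_{n,1}$, not $\zeta_n$. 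There is no reason for the exact multiplicative form you posit, and the auxiliary recursion $\zeta_{n+1}\asymp_C\Phi(\zeta_n)$ is introduced without derivation.

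More importantly, your analysis scheme --- propagating $\rho_n<C^{-1}$ or $\rho_n>C$ forward by induction --- is not how the paper's conclusions are obtained and does not obviously close, because from a one-step inequality with multiplicative slack $C$ in both directions one cannot conclude that $\rho_{n+1}$ stays on the same side of the threshold. The theorem's hypothesis is ``there exists $n$'', and the paper exploits this via the two-parameter structure: fixing the single good $n$ and letting $m$ range, Proposition \ref{PropWild} shows that $\inf\eta_k=0$ would force $\eta_{n+m}>\eta_{m+1}$ for some $m$, contradicting monotonicity; similarly Proposition \ref{NoWildProp} deduces $\inf\zeta_k>0$ from the bound $\eta_{k+m}\geqslant C^{-1}\eta_k\eta_m$ by a $\liminf\eta_k^{1/k}$ argument that chooses a fixed $m$ with $\eta_k/\eta_{k+m}\geqslant 2$ along a subsequence. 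These are ``long-range'' arguments in $m$ with no analogue in a one-step scheme. Your final product formula for $|B(\cC)\cap T_1|$ is likewise unestablished and unnecessary: the paper concludes from $\inf\eta_n>0$ that $\bigcap X_n$ has positive measure and consists of points whose orbits accumulate at the critical point, hence lie in $B(\cC)$; in the other cases $\eta_n\to 0$ directly gives measure zero. Finally, the actual technical core --- establishing the two-parameter recursion --- hinges on the combinatorics of nested, primitive and separated copies (Lemmas \ref{LmNesting}--\ref{LmXnIsUnion}) and the Koebe space of Proposition \ref{PropKoebeSpace}, none of which your sketch addresses beyond a passing nod to bounded distortion.
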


Avila-Lyubich trichotomy suggests that in the third case $B( \cC)$ has a hyperbolic dimension strictly less than $1$, and in the second case $B( \cC)$ have Hausdorff dimension equal to one but zero measure. Notice that the main conclusion we make in the first and the second case is the same: $B( \cC)$ has zero Lebesgue measure. However, there is a principal difference between them from a computational point of view. Indeed, the first case can be verified in finite time, in contrast to the second. {The strategy of the proof of Theorem A follows \cite{DS-20} and \cite{Dudko_20}, where an analogue of part (1) of the trichotomy was proven for some complex analytic Feigenbaum maps.}

In the second part of the paper we use the above trichotomy together with rigorous computer estimate on $\zeta_n$ and $\eta_n$  to prove the following result.

\begin{thmB}{\it (\underline{Wild attractors for Fibonacci maps}.)}
\begin{itemize}
\item[$(1)$] The Fibonacci renormalization cycle does not admit a wild attractor for $d=3.8$.
\item[$(2)$] The Fibonacci renormalization cycle admits a wild attractor for $d=5.1$.
\end{itemize}
\end{thmB}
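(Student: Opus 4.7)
The plan is to reduce each half of Theorem B to a single strict inequality at one finite renormalization depth, to which the trichotomy of Theorem A can then be applied. For $d=3.8$ the goal is to place the system in case (1) by exhibiting an $n$ with $\eta_n/\zeta_n < C^{-1}$; for $d=5.1$ the goal is to place it in case (3) by exhibiting an $n$ with $\eta_n/\zeta_n > C$. In each situation the single verified inequality also automatically rules out case (2), and the appropriate wild-attractor conclusion is then read off directly from Theorem A.

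Two preparatory ingredients have to be in place before the numerics is set up. First, Theorem A presupposes the existence of the Fibonacci renormalization $2$-cycle $\{F,G\}$ with critical degree $d$, and this cycle is not available for non-integer $d$ by analytic methods. One therefore performs a computer-assisted proof of existence: writing the renormalization operator as a map on a suitable Banach space of analytic Fibonacci maps, constructing a sufficiently accurate numerical approximation $(\tilde f, \tilde g, \tilde \lambda)$, and verifying a contraction/Newton-type criterion in rigorous interval arithmetic. This produces enclosures of $f$, $g$, and $\lambda$ sharp enough for all subsequent estimates. Second, the constant $C=C(d)$ of Theorem A must be made explicit; this requires going through the proof of Theorem A to read off the dependence of $C$ on $\lambda$ and on distortion/expansion constants for $f$ and $g$, and then bounding each of these from the enclosures obtained above.

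With $(f,g,\lambda)$ and $C$ in hand, the core work is to enclose $\eta_n$ and $\zeta_n$ at a suitable depth $n$. Since
\begin{align*}
T_{n+1} &= \lambda^n T_1, \\
f_n(z)  &= \pm\lambda^n f(z/\lambda^n), \\
g_n(z)  &= \pm\lambda^n g(\pm z/\lambda^n),
\end{align*}
the prerenormalizations $F_n$ are available in closed form from $(f,g,\lambda)$, and iterations of $F$ and of $F_{n-1}$ can be propagated rigorously. To enclose $\eta_n=|X_n|/|T_1|$, partition $T_1$ into a fine mesh, propagate each sub-interval under $F$ in interval arithmetic up to some iteration bound $K$, and classify each interval as certainly hitting $T_n$, certainly missing $T_n$ for all $k\le K$, or undecided; the total measure of the first class gives a lower bound and the first plus the undecided class gives an upper bound on $\eta_n$. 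An analogous procedure on $T_n$ using $F_{n-1}$ encloses $\zeta_n=|Z_n|/|T_n|$, via the description of $Z_n$ as the $F_{n-1}$-backward orbit of $Y_n$ intersected with $T_n$. For $d=3.8$ we expect $\eta_n$ to drop rapidly while $\zeta_n$ stabilizes, so at a moderate depth the upper bound on $\eta_n/\zeta_n$ should fall below $C^{-1}$; for $d=5.1$ the opposite behaviour is expected and the lower bound on $\eta_n/\zeta_n$ should exceed $C$.

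The principal obstacle I anticipate is twofold. Extracting an effective value of $C(d)$ from the proof of Theorem A is likely the more delicate half: the constant is a composition of several distortion estimates along renormalization levels, and any pessimistic accounting can force the required depth $n$ so deep that the rigorous mesh becomes unmanageable. The second obstacle is the exponential proliferation of preimages of $T_n$ under $F$ combined with the usual wrapping effect of interval arithmetic; to keep the computation tractable one should iterate in the coordinates of the successive prerenormalizations $F_k$, where renormalization self-similarity keeps distortions bounded, adaptively refine the partition near the critical orbit, and use just enough numerical precision that the two target inequalities survive rigorous rounding with a healthy margin. Once these two obstacles are surmounted, the conclusions of Theorem B follow by a direct appeal to Theorem A.
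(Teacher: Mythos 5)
Your proposal follows essentially the same strategy as the paper: establish the renormalization $2$-cycle via a rigorous Newton/contraction argument in interval arithmetic, extract an explicit Koebe distortion constant $C(d)$, and then enclose $\eta_n$ and $\zeta_n$ at a single finite depth so that the trichotomy of Theorem A forces case (1) for $d=3.8$ (via $\eta_n/\zeta_n<C^{-1}$) and case (3) for $d=5.1$ (via $\eta_n/\zeta_n>C$). One small reassurance regarding your anticipated obstacle: the paper's constant $C$ is not an $n$-dependent composition of distortion estimates but a single Koebe constant obtained from the space between $T_n$ and the nearest postcritical points (after verifying non-positive Schwarzian), so it stays uniformly bounded across levels, and the required depths ($n=9$ and $n=4$) remain computationally manageable.
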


We remark that at present the result about the existence of the wild attractor concerns only the renormalization period $2$ cycle. A further renormalization study, specifically, a proof of renormalization hyperbolicity and rigidity of infinitely renormalizable maps, is required to extend these results to a larger set of infinitely renormalizable Fibonacci maps.

\section{Trichotomy for attractors of Fibonacci maps: The construction}
\subsection{Copies of sets and the distortion constant $C$}
\begin{Def} Let $k,n\in\mathbb N$ and $A$ be a connected component of $F^{-k}(T_n)$. We will call $A$ \emph{a copy of $T_n$ under $F^k$}.\end{Def}
Notice that $X_n$ is a union of all copies of $T_n$ which lie inside $T_1$. We will say that two segments $A$ and $B$ are {\it linked} if $A\cap B\neq \varnothing$ but neither $A$ contains $B$ nor $B$ contains $A$.
\begin{Lm}\label{LmNesting} The set of copies of $T_n,n\in\mathbb Z_+$, is nested, \ie for any copy $A$ of $T_n$ and any copy $B$ of $T_m$, $m,n\in\mathbb Z_+$, one has $A\subset B$, or $B\subset A$, or $A\cap B=\varnothing$.
\end{Lm}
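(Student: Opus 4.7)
The plan is to prove the nesting by induction on $k+j$, assuming without loss of generality that $k\le j$. The base case $k=j=0$ is immediate: $A=T_n$ and $B=T_m$ are centered intervals, and $T_{i+1}=\lambda^i T_1$ with $|\lambda|<1$ gives the nested chain $T_1\supset T_2\supset\cdots$.

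The engine of the inductive step is the following pushforward observation. If $A$ is a connected component of $F^{-k}(T_n)$ with $k\ge 1$, then $F(A)$ is exactly a connected component of $F^{-(k-1)}(T_n)$. Indeed, $F(A)$ is connected and lies in some component $C$ of $F^{-(k-1)}(T_n)$; conversely every connected component of $F^{-1}(C)$ surjects onto $C$, because $F|_{J_1}$ is a diffeomorphism onto $T_0$ and $F|_{T_1}$ is unimodal onto $[f(0),1]$, both images containing $C\subset T_0$. By maximality of $A$ inside $F^{-k}(T_n)$, the component $A$ is itself a connected component of $F^{-1}(C)$, so $F(A)=C$. For $k,j\ge 1$ this yields $F(A)$ and $F(B)$ as smaller-depth copies with $F(A)\cap F(B)\supset F(A\cap B)\ne\emptyset$; the inductive hypothesis then gives them nested, say $F(A)\subset F(B)$.

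To pull this back to $A\subset B$ I run a case analysis. Since $A\cap B\ne\emptyset$ and each component of a preimage sits wholly in $J_1$ or in $T_1$, both $A$ and $B$ lie in the same piece. The $J_1$ case is immediate from $F|_{J_1}$ being a homeomorphism. In the $T_1$ case I use that $f=F|_{T_1}$ is even, so components of $F^{-k}(T_n)\cap T_1$ containing $0$ are symmetric about $0$, while the rest come in symmetric pairs. If both $A,B$ contain $0$, writing $A=[-a,a]$ and $B=[-b,b]$, the relation $[f(0),f(a)]=F(A)\subset F(B)=[f(0),f(b)]$ forces $a\le b$, hence $A\subset B$. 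If neither contains $0$, then $A\cap B\ne\emptyset$ puts them in the same half of $T_1$ and monotonicity of $f$ there pulls back the nesting. The mixed subcase $0\in A$, $0\notin B$ is inconsistent with $F(A)\subset F(B)$, since $f(0)\in F(A)\setminus F(B)$ (using $f^{-1}(f(0))=\{0\}$); so in that subcase actually $F(B)\subset F(A)$, whence $B$ lies in one of the two symmetric preimage branches of $F(B)$ under $f$, both contained in $A$, giving $B\subset A$.

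The remaining sub-case $k=0$, $j\ge 1$ cannot be attacked by pushing $A=T_n$ forward. I would handle it by re-presenting $T_n$ as a connected component of $F^{-q_{n-1}}(T_{n-1})$, namely the central domain of the $(n-1)$-st prerenormalization, so that $A$ becomes a copy of positive depth $k'=q_{n-1}$ and the pushforward argument above applies. This requires choosing the induction parameter carefully (e.g.\ a lexicographic order on $(\min(k,j),\,k+j)$) so the re-presentation terminates. I expect this book-keeping, together with the check that the endpoints $\pm\lambda^n\in\partial T_n$ are ``admissible'' (their forward orbits stay inside the nested family $\{T_i\}$, so that no copy of $T_m$ can link $T_n$ without containing it or being contained in it), to be the main obstacle of the proof.
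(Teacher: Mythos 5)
Your ``pushforward observation'' is incorrect, and this is precisely the subtlety the paper's proof is organized around. You assert that if $A$ is a connected component of $F^{-k}(T_n)$ then $F(A)$ is exactly a connected component of $F^{-(k-1)}(T_n)$, on the grounds that ``$F|_{T_1}$ is unimodal onto $[f(0),1]$, both images containing $C\subset T_0$.'' But $F(T_1)=[f(0),1]$ is a proper subinterval of $T_0$, so there is no reason for a component $C$ of $F^{-(k-1)}(T_n)$ to lie inside $[f(0),1]$. When $C$ extends to the left of $f(0)$, the component $A$ of $F^{-1}(C)$ in $T_1$ that contains $0$ is a symmetric interval $[-a,a]$ mapping onto $[f(0),f(a)]\subsetneq C$, not onto $C$. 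The paper flags this explicitly (``the connected component of $F^{1-k}(T_n)$ might be larger than $F(A)$ if $F$ is not a homeomorphism on $A$, i.e.\ if $0\in A$'') and introduces $\widetilde{F(A)}$, the full ambient component of $F^{-(k-1)}(T_n)$, as a replacement for $F(A)$ when $0\in A$.

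This is a real gap, not a cosmetic one: in your mixed subcase $0\in A$, $0\notin B$, you apply the inductive hypothesis to $F(A)$ and $F(B)$, but $F(A)$ is not a copy of $T_n$ when $0\in A$, so the hypothesis says nothing about it. The repair is to apply the induction to $\widetilde{F(A)}$ and $F(B)$ instead: your observation that $f(0)\in F(A)\subset\widetilde{F(A)}$ while $f(0)\notin F(B)$ (since $0\notin B\subset T_1$) still rules out $\widetilde{F(A)}\subset F(B)$, so $F(B)\subset\widetilde{F(A)}$; since the right endpoint of $\widetilde{F(A)}$ is $f(a)$ while $\inf F(B)>f(0)$, this forces $F(B)\subset[f(0),f(a)]=F(A)$, after which your pullback to $B\subset A$ goes through. (Note also that the ``both contain $0$'' subcase is trivial --- two intervals symmetric about $0$ are automatically nested --- and needs no induction at all.) With these corrections your direct-induction argument lands essentially where the paper does: the paper runs the same pushforward on linked pairs as a descent-to-contradiction using $\widetilde{F(A)}$, $\widetilde{F(B)}$, and re-presents $T_n$ as a copy of $T_{n-1}$ of positive depth to handle $k=0$, exactly as you propose.
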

\begin{proof} Given a copy $A$ of $T_n$ under $F^k$ for some $k\in\mathbb N,n\in\mathbb Z_+$ we notice that $F(A)$ does not need to be a copy of $T_n$ under $F^{k-1}$. Indeed, the connected component of $F^{1-k}(T_n)$ might be larger than $F(A)$ if $F$ is not a homeomorphism on $A$, $\ie$ if $0\in A$. Denote this connected component by $\widetilde{F(A)}$. Let us show that for any copy $A$ of $T_n$ under $F^k$ and any copy $B$ of $T_m$ under $F^l$, $m,n\in\mathbb Z_+,l,k\in \mathbb N$, such that $A$ and $B$ are linked, one has $\widetilde{F(A)}$ and $\widetilde{F(B)}$ are also linked. There are a few possibilities.
\vskip 0.2cm
\noindent $a)$ $0\notin A\cup B$. Then $F$ is a homeomorphism on $A\cup B$, therefore $\widetilde{F(A)}= F(A)$ and $\widetilde{F(B)}=F(B)$ are linked.
\vskip 0.2cm
\noindent $b)$ $0\in A\setminus B$ (the case $0\in B\setminus A$ is similar). Without loss of generality we may assume that $B\subset \mathbb R_+$. Clearly, $A\cup B\subset T_1$. Since $F|_{T_1}=f$ is even, $A$ is symmetric with respect to $0$. We have that $A_+:=A\cap \mathbb R_+$ and $B$ are linked, $f$ is a homeomorphism on $A_+\cup B$, therefore $F(A)=f(A_+)$ and $F(B)=f(B)$ are linked. Since $\widetilde{F(A)}\setminus F(A)$ consists of a segment attached to $F(A)$ at $F(0)$, we obtain that $\widetilde{F(A)}$ and $\widetilde{F(B)}$ are linked.
\vskip 0.2cm
\noindent $c)$ $0\in A\cap B$. Then both $A$ and $B$ are symmetric with respect to $0$, therefore cannot be linked.

Notice that $T_1$ cannot be linked with any copy of $T_n$, $n\in\mathbb Z_+$. Therefore, given a copy $A$ of $T_n$ under $F^k$ and a copy $B$ of $T_m$ under $F^l$, $m,n,l,k\in\mathbb Z_+$, such that $A$ and $B$ are linked and $k=0$, we have $n\geqslant 2$. In this case we will view $A=T_n$ as a copy of $T_{n-1}$ with $k=u_n>0$ (the $n$th Fibonacci number). Similarly, if $l=0$. If $k,l>0$ we can replace $A,B$ with new linked copies $\widetilde{F(A)}$ and $\widetilde{F(B)}$. Since one can make this replacement only finitely many times we obtain a contradiction, which completes the proof.
\end{proof}

We will now introduce two types of copies that will play a special role in our construction.

\begin{Def} Let $A$ be a copy of $T_n$ under $F^k$. If $F^j(A)\cap T_n=\varnothing$ for all $0\leqslant j<k$ we will call $A$ \emph{a primitive copy} of $T_n$. Otherwise, let $0\leqslant j<k$ be the maximal number such that $B=F^j(A)\cap T_n\neq \varnothing$.  We will say that $A$ is a \emph{separated copy of $T_n$} if there is no $l$ such that $F_{n-1}^l(B)=F^k(A)$. Here by writing $F_{n-1}^l(B)=F^k(A)$ we mean, in particular, that $F_{n-1}^l(B)$ is well defined, \ie $F_{n-1}^i(B)\subset T_n\cup J_n$ for all $0\leqslant i<l$. \end{Def}

The significance of these two types of copies is made clear in the next Proposition. Before we proceed with the Proposition, however, we require an additional Lemma.
Let $x_n=F^n(0)$, $n\geqslant 0$.
\begin{lem} \label{postcrit}
  Let $\{F,G\}$ be a renormalization period $2$ periodic cycle. Then the following holds.

  \vspace{1.5mm}

  \noindent $1)$ Consider the nested sequence of renormalization intervals $T_n$  for $G$. The nearest postcritical points to the boundaries of $T_{2 n}$, $n \ge 1$, are  $(-\lambda^2)^{n-1} x_4$  and  $(-\lambda^2)^{n-1} x_7=(-\lambda^2)^{n-1}(-\lambda x_4)$,  while those to the boundaries of $T_{2 n+1}$, $n \ge 1$, are  $(-\lambda^2)^{n-1} x_4$  and  $(-\lambda^2)^{n-1} x_{11}=(-\lambda^2)^n x_4$.

  \vspace{1.5mm}

  \noindent $2)$ Consider the nested sequence of renormalization intervals $T_n$  for $F$. The nearest postcritical points to the boundaries of  $T_{2 n+1}$, $n \ge 1$, are  $(-\lambda^2)^{n-1} x_7=  (-\lambda^2)^{n-1} \lambda x_4$  and  $(-\lambda^2)^{n-1} x_{11}=(-\lambda^2)^n x_4$, while those to   $T_{2 n}$, $n \ge 2$, are  $(-\lambda^2)^{n-2} x_7=  (-\lambda^2)^{n-2} \lambda x_4$  and  $(-\lambda^2)^{n-2} x_{18}=(-\lambda^2)^{n-1} \lambda x_4$.
\end{lem}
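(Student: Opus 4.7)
The argument proceeds by induction on $n$, exploiting the self-similarity of the Fibonacci combinatorics under the period-two renormalization cycle.

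The first ingredient is a scaling identity derived from the period-two cycle $\{F,G\}$. At the interval level, $T_{n+2}=\lambda^2 T_n$ (immediate from $T_{k+1}=\lambda^k T_1$). At the level of the dynamics, one step of renormalization reverses orientation (since $R$ interchanges the components $\cA^\pm$); over two steps the composite rescaling from $T_{n+2}$ to $T_n$ picks up a minus sign, consistent with the $\pm$ appearing in the formulas $f_n(z)=\pm\lambda^n f(z/\lambda^n)$ and $g_n(z)=\pm\lambda^n g(\pm z/\lambda^n)$. Consequently, the postcritical orbit inside $T_{n+2}$ is the image, under $z\mapsto -\lambda^2 z$, of the postcritical orbit inside $T_n$; this map sends the nearest postcritical point to the left endpoint of $\partial T_n$ onto the nearest postcritical point to the right endpoint of $\partial T_{n+2}$, and vice versa.

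Given this identity, the inductive step is essentially automatic: assuming $p,q$ are the nearest postcritical points to $\partial T_n$, the images $-\lambda^2 p$ and $-\lambda^2 q$ lie in $T_{n+2}$ and are nearest to its boundary, since any strictly closer postcritical point would, upon rescaling back, be strictly closer to $\partial T_n$ than $p$ or $q$. The bulk of the work therefore lies in the base cases: for small $n$, I would use the Fibonacci combinatorics (closest returns of the critical orbit happen at Fibonacci times, with intermediate iterates landing in specified lateral intervals $J_k$) to determine directly which iterates of $0$ fall in each $T_n\setminus T_{n+1}$ and on which side of $0$. For $G$, this identifies $\{x_4,x_7\}$ as nearest to $\partial T_2$ and $\{x_4,x_{11}\}$ as nearest to $\partial T_3$; for $F$, one gets $\{x_7,x_{11}\}$ and $\{x_7,x_{18}\}$ for $\partial T_3$ and $\partial T_4$ respectively. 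Note that the sequence $4,7,11,18$ satisfies a shifted Fibonacci-type recursion ($11=4+7$, $18=7+11$), reflecting that these indices are the consecutive first-return times at successive renormalization levels.

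The main obstacle is the combinatorial bookkeeping in the base cases: one must correctly track, among the first $\sim 20$ iterates of the critical point, which land in each nested interval $T_n\setminus T_{n+1}$ and on which side, consistently with the sign conventions distinguishing $F\in\cA^-$ from $G\in\cA^+$. The asymmetry of the statement --- parts $(1)$ and $(2)$ have different starting indices and different ranges of $n$ --- reflects precisely this asymmetry: one step of the period-two renormalization shifts the closest-return count by one Fibonacci step, so the Fibonacci indices of the leading postcritical points near $\partial T_n$ differ by one between the $F$-sequence and the $G$-sequence.
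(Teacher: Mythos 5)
Your plan takes essentially the same route as the paper: identify the base cases combinatorially, then propagate them through the $z\mapsto -\lambda^2 z$ self-similarity of the period-two cycle, using the additional factor $-\lambda^{-1}$ to pass between the $G$-intervals and the $F$-intervals. Your base-case data --- $\{x_4,x_7\}$ at $\partial T_2$ and $\{x_4,x_{11}\}$ at $\partial T_3$ for $G$, and $\{x_7,x_{11}\}$ at $\partial T_3$, $\{x_7,x_{18}\}$ at $\partial T_4$ for $F$ --- agrees with the paper, and your inductive step is sound. However, the base-case verification, which you yourself flag as ``the main obstacle'', is left as a promissory note. The paper does not re-derive the Fibonacci closest-return combinatorics from scratch; it settles the base cases by invoking two specific results of Lyubich--Milnor: their Lemma~3.5, which pins down
\[
\cP\subset[x_1,x_6]\cup[x_{12},x_4]\cup[x_5,x_{13}]\cup[x_{11},x_3]\cup[x_7,x_2],
\]
and their Lemma~6.1, which gives the interleaving $[x_5,-x_5]\subset T_3\subset[-x_3,x_3]\subset T_2\subset[-x_2,x_2]$ together with $[x_{11},x_3]\subset J_3$. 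These containments, plus $0<x_{13}<-x_5$, immediately identify $x_4,x_{11}$ (resp.\ $x_4,x_7$) as the extremal postcritical points adjacent to $\partial T_3$ (resp.\ $\partial T_2$). That citation is the substantive content your sketch gestures at but does not supply, so as written the proof is incomplete precisely at the point you identify as hardest.

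One smaller slip worth correcting: you state that the images $-\lambda^2 p$, $-\lambda^2 q$ of the nearest postcritical points ``lie in $T_{n+2}$''. Since $p,q$ lie strictly outside $T_n$ (this is exactly what provides the Koebe space exploited in Proposition~\ref{PropKoebeSpace}), their rescalings lie strictly outside $T_{n+2}$; they are the nearest postcritical points to $\partial T_{n+2}$ from the exterior. This does not damage the inductive step, but the correct phrasing matters, since the eventual application requires room on the outside of $T_n$, not the inside.
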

\begin{proof}
  By a result of Lyubich and Milnor (see \cite{LM}, Section 6), the order of the  postcritical set of maps in $\cA^{-}$ coincides with the order of the postcritical set for unimodal Fibonacci maps.

Then by Lemma $3.5$ in \cite{LM},
    $$\cP \subset [x_1,x_6] \cup [x_{12},x_4] \cup [x_5,x_{13}] \cup [x_{11},x_3] \cup [x_7,x_2].$$
    Additionally, by Lemma $6.1$  in \cite{LM}
    \footnote{We notice that there is slight discrepancy in the notations: our $T_n$ and $J_n$ are denoted by $T_{n-1}$ and $J_{n-1}$ correspondingly in \cite{LM}.}
    \begin{align*}
      [x_5,-x_5] \subset  \ & T_3 \subset [-x_3,x_3] \\
      [x_{11},x_3] \subset \ & J_3.
    \end{align*}
    Since $0<x_{13}<-x_5$, we obtain, therefore, that $x_{13} \in T_3$, and points $x_4$ and $x_{11}=-\lambda^2 x_4$ are the nearest postcritical points to $T_3$.

On the other hand, by Lemma $6.1$,
\begin{equation*}
[-x_3,x_3] \subset  T_2 \subset [-x_2,x_2].
    \end{equation*}
    Therefore, $x_7=-\lambda x_4$ is the nearest postcritical point to the right of $T_2$, and $x_4$ is the nearest postcritical point to the left.

    The claim for $G$ follows. The result for $F$ follows from the facts that:

    \vspace{1mm}

    \noindent $a)$ The interval $T_4$ for $G$ is mapped by $-\lambda^{-1}$ into the interval $T_3$ for $F$, while the nearest postcritical points $x_{18}$ and $x_{11}$ are mapped to $x_{11}$ and $x_{7}$, respectively;

    \vspace{1mm}

    \noindent $b)$ The interval $T_5$ for $G$ is mapped by $-\lambda^{-1}$ into the interval $T_4$ for $F$, while the nearest postcritical points $x_{29}$ and $x_{11}$ are mapped to $x_{18}$ and $x_{7}$, respectively.
\end{proof}

\begin{Prop}\label{PropKoebeSpace} Let $A$ be either a primitive or a separated copy of $T_n$ under $F^k$. Then the inverse branch $\phi$ of $F^k \vert_A$ extends to an analytic map on $(p,q)$, where $p$ and $q$ are the nearest postcritical points to $T_n$, listed in Lemma $\ref{postcrit}$.
\end{Prop}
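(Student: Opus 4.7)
The plan is to extend $\phi$ by pulling back $(p,q)$ along the orbit of $A$. Set $A_j := F^j(A)$, so $A_0=A$ and $A_k=T_n$; let $I_k := (p,q)$, and for $j$ decreasing from $k$ to $1$ let $I_{j-1}$ be the connected component of $F^{-1}(I_j)$ containing $A_{j-1}$, obtained using the monotone branch of $F$ that sends $A_{j-1}$ onto $A_j$. If at each step $F|_{I_{j-1}} \to I_j$ is a diffeomorphism, then the composition of the resulting inverse branches gives the desired analytic extension $\tilde\phi : (p,q) \to I_0$ of $\phi$.

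The first step is to localize the only possible obstruction. Since $F|_{J_1}$ is a diffeomorphism onto $[-1,1]$, no obstruction can arise at a step with $A_{j-1} \subset J_1$. At a step with $A_{j-1} \subset T_1$, an analysis of the preimage structure of the even unimodal map $F|_{T_1}$ shows that $I_{j-1}$ contains the critical point $0$ in its interior if and only if $I_j$ strictly contains the critical value $x_1 = F(0)$. Hence it suffices to prove $x_1 \notin \inter(I_j)$ for every $j \in \{1,\ldots,k\}$ with $A_{j-1} \subset T_1$.

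Suppose for contradiction that the obstruction occurs, and let $j^*$ be maximal with $x_1 \in \inter(I_{j^*})$. By maximality, $F^{k-j^*}$ is a diffeomorphism of $I_{j^*}$ onto $(p,q)$, so the forward orbit of $x_1$ yields $x_m = F^m(0) \in (p,q)$ with $m = k-j^*+1$. Since $x_1 \in J_1$ while $(p,q) \cap \cP \subset T_n \subset T_1$, one has $x_1 \notin (p,q)$, so $j^* < k$ and $m \ge 2$. By Lemma \ref{postcrit}, $x_m \in T_n$. Consequently $A_{k-m}=A_{j^*-1} \subset T_1$ is a copy of $T_n$ under $F^m$, while the component $B$ of $F^{-m}(T_n)$ containing $0$ is another copy of $T_n$ under $F^m$ with $0 \in B \cap T_n$.

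Since $A_{k-m}$ and $B$ are components of $F^{-m}(T_n)$, they are either equal or disjoint. If $A$ is primitive, then $A_{k-m}\cap T_n=\varnothing$ excludes $A_{k-m}=B$, and one tracks the pull-back to see that $I_{k-m}$ is symmetric about $0$ and contains both $A_{k-m}$ and $T_n$; the requirement that $F^m$ send $I_{k-m}$ onto $(p,q)$ through the prescribed branches of the orbit then forces the tail $A_{k-m},\ldots,A_k$ to coincide with the iterate $F_{n-1}^l(B\cap T_n)=T_n$ of the pre-renormalization for some $l$, contradicting primitivity. If $A$ is separated, the same identification produces such an $l$, which is precisely what the definition of separated rules out. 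The main obstacle is this final matching step: one must carefully bookkeep the Fibonacci return times, identify $B$ as the appropriate principal pull-back copy, and compare the tail dynamics with iterates of $F_{n-1}$ on $T_n \cup J_n$.
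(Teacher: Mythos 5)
Your strategy of pulling back $(p,q)$ directly is a legitimate alternative, but it is not what the paper does, and your closing ``matching'' step has a genuine gap. The paper only tracks the forward images $A_j=F^j(A)$ and shows $0\notin F^j(A)$ for all $0\le j<k$: if $0\in F^j(A)$, this copy of $T_n$ meets $T_n$ at $0$, so by Lemma \ref{LmNesting} it lies \emph{inside} $T_n$, hence $A$ is not primitive; the return of $F^j(A)\subset T_n$ to $T_n=F^k(A)$ is then an iterate of the first-return map $F_{n-1}$, which violates separatedness. The extension from $T_n$ to $(p,q)$ is then implicit from the standard fact that the endpoints of the maximal interval of analyticity of an inverse branch of $F^k$ are either $\pm 1$ or forward iterates of the critical point, i.e.\ postcritical points; as $p,q$ are the nearest postcritical points outside $T_n$, the maximal interval contains $(p,q)$.

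Your final step does not go through as written. In the primitive case $A_{k-m}\not\subset T_n$, so $F^m\vert_{A_{k-m}}$ is \emph{not} an iterate of the first-return map $F_{n-1}$ --- that identification is only available for copies of $T_n$ lying in $T_n$, such as $B$ --- so the claimed coincidence of the tail with $F_{n-1}^l(B\cap T_n)$ is unfounded. Moreover $F^m(I_{k-m})$ is only a proper subinterval of $(p,q)$ (one of its endpoints is $x_m$, since $F\vert_{I_{k-m}}$ folds at $0$), so ``the requirement that $F^m$ send $I_{k-m}$ onto $(p,q)$'' does not hold, and the inclusion $T_n\subset I_{k-m}$ is also unjustified. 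The observation that actually closes your argument is simpler and you stop just short of it: since $F^{k-j^*}\colon I_{j^*}\to (p,q)$ is a diffeomorphism carrying $A_{j^*}$ onto $T_n$, and $x_m=F^{k-j^*}(x_1)\in T_n$, the unique preimage of $x_m$ in $I_{j^*}$, namely $x_1$, must lie in $A_{j^*}$. Because $A_{j^*-1}\subset T_1$ and $f=F\vert_{T_1}$ attains its unique minimum $x_1$ at $0$, the inclusion $x_1\in F(A_{j^*-1})$ forces $0\in A_{j^*-1}=A_{k-m}$, so $A_{k-m}\cap T_n\ne\varnothing$, contradicting primitivity; in the separated case $0\in A_{k-m}$ lands you exactly in the paper's nesting/first-return configuration. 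Once you notice this, the whole pullback scaffolding collapses to the paper's direct statement $0\notin F^j(A)$ for $j<k$, which is why the paper's proof is shorter.
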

\begin{proof}  If $0\in F^j(A)$ for some $j<k$ then $A$ is not primitive, so it is separated. Moreover, by nesting property, $F^j(A)\subset T_n$. Since $F_{n-1}$ is the first return map to $T_{n}$, it follows that the iterate $F^{k-j}$ that returns $F^j(A)$ to $T_n$ has to be some  iterate $l$ of the first return map $F_{n-1}$. Therefore, $F^l_{n-1}(F^j(A))=F^k(A)$ and we arrive to a contradiction to separatedness.

Thus, $0\notin F^j(A)$ for all $0\leqslant j<k$.
\end{proof}


We, therefore, obtain with the help of Koebe Distortion Principle:

\begin{Co}\label{CoKoebeDist} There exists a constant $C$ depending only on $d$ such that in the settings of Proposition \ref{PropKoebeSpace} one has:
$$|\phi'(x)|/|\phi'(y)|\leqslant C\;\;\text{for all}\;\;x,y\in T_n.$$
\end{Co}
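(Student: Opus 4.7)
The plan is to apply the Koebe Distortion Principle to the univalent extension of $\phi$ provided by Proposition \ref{PropKoebeSpace}. The first step is to verify that the \emph{Koebe space} around $T_n$ inside $(p,q)$, namely
$$\delta_n := \min\!\bigl(\mathrm{dist}(T_n,p),\,\mathrm{dist}(T_n,q)\bigr)\big/|T_n|,$$
is bounded below by a positive constant $\delta$ depending only on $d$. Using the self-similar scaling $T_n = \lambda^{n-1}T_1 = [-\lambda^n,\lambda^n]$ and the explicit list from Lemma \ref{postcrit}, both the length $|T_n|$ and the distances $\mathrm{dist}(T_n,p), \mathrm{dist}(T_n,q)$ scale like $\lambda^n$ up to multiplicative constants expressible in terms of $\lambda$ and $x_4$ (which themselves depend only on $d$). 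Hence the ratio $\delta_n$ is actually independent of $n$ for all sufficiently large $n$ (the large-$n$ regimes covered by the lemma), and there remain only finitely many small-$n$ indices to check separately, each giving strictly positive Koebe space.

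The second step is to upgrade the real-analytic extension of $\phi$ to a univalent map on a complex neighborhood of $T_n$. Since $F$ lies in the Epstein class, each inverse branch of $F$ off the critical value extends univalently to the slit plane $\mathbb C_{I}$ for an appropriate interval $I$. The argument in Proposition \ref{PropKoebeSpace} shows that $\phi$ is a composition of such branches along an orbit that avoids the critical point, so $\phi$ extends as a univalent map on the slit plane $\mathbb C_{(p,q)}$. The Koebe space lower bound $\delta_n \geq \delta$ ensures that $T_n$ sits in a hyperbolic $(p,q)$-neighborhood of bounded hyperbolic diameter (depending only on $\delta$).

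The third step is a direct application of the standard Koebe distortion estimate for univalent maps on slit planes: given a univalent $\phi:\mathbb C_{(p,q)} \to \mathbb C$ and an interval $T_n \subset (p,q)$ whose hyperbolic $(p,q)$-diameter is bounded by $D(\delta)$, one has
$$\sup_{x,y\in T_n}\frac{|\phi'(x)|}{|\phi'(y)|} \leq C(\delta),$$
where $C(\delta)$ depends only on $\delta$. Plugging in $\delta = \delta(d)$ yields the desired constant $C = C(d)$.

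The main potential obstacle is the first step, namely ensuring the lower bound on $\delta_n$ is genuinely uniform in $n$. This is not entirely automatic: one must cross-check each case listed in Lemma \ref{postcrit} ($n$ even versus odd, map $F$ versus $G$) and verify that the nearest postcritical points on both sides of $T_n$ are at a distance comparable to, and not asymptotically negligible against, $|T_n|$. Once this is confirmed from the explicit formulas $x_7=-\lambda x_4$, $x_{11}=-\lambda^2 x_4$, $x_{18}=-\lambda^3 x_4$, etc., the rest of the argument is a textbook invocation of Koebe.
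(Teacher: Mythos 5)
Your argument reaches the same conclusion but by a genuinely different route than the paper's, and the difference matters in the computer-assisted setting. The paper applies the \emph{real} one-dimensional Koebe distortion principle for inverse branches of negative-Schwarzian maps: later (Lemma~\ref{KoebeConst}) it verifies by interval arithmetic that the Schwarzian derivative of the renormalization fixed point is non-positive, and then invokes the classical bound $C < (1+\tau)^2/\tau^2$ with $\tau$ the Koebe space of $T_1$ inside the interval $[x_4,-\lambda^{-1}x_4]$ determined by Lemma~\ref{postcrit}. (Scale-invariance of the fixed point makes this $\tau$ the same at every level $n$, up to the finitely many parity/small-$n$ cases, which is essentially the same observation as your step~1.) You instead promote the real-analytic extension from Proposition~\ref{PropKoebeSpace} to a univalent map on the slit plane $\mathbb C_{(p,q)}$ using the Epstein class, and then apply the complex Koebe distortion.

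The gap in your version is precisely that Epstein class membership of the renormalization cycle is never established in this paper, and it is not automatic. The fixed point here is a smooth map built from the polynomial pair $(\psi,\phi)$ obtained via the Contraction Mapping Principle; for it to be Epstein one would need the inverse branches to extend univalently to the full slit planes $\mathbb C_I$, which is a global analytic condition on the extension, not something that follows from the real-line structure alone. The Epstein class is discussed in the introduction only in the context of Lyubich--Milnor's complex Fibonacci maps, and the authors explicitly say the present paper works with smooth maps on unions of segments. By contrast, negative Schwarzian is a pointwise differential inequality on the real interval $J_1\cup T_1$ that can be (and is) checked rigorously by brute-force interval arithmetic on the computed fixed point. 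So your approach buys conceptual generality (it would apply to any Epstein-class Fibonacci map without computing Schwarzians), but the paper's approach is the one that is actually verifiable with the data produced in Section~4. If you want to retain your slit-plane argument you would need to supply a separate proof that the fixed point $(\psi,\phi)$ produced by the Newton scheme does lie in the Epstein class; otherwise the step \emph{``Since $F$ lies in the Epstein class\ldots''} is an unproved assumption, and you should replace it with the negative-Schwarzian verification.
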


Clearly, a similar result holds for $G$.

\subsection{Representations of $X_n$ and $X_{n+m}$.}
Fix $n,m\in\mathbb N$. 
Observe that
$$\{x\in T_n:F_{n-1}^k(x)\in T_{n+m}\;\;\text{for some}\;\;k\geqslant 0\}=\lambda^{n-1}X_{m+1}.$$
Clearly, $T_{n+m} \subset \lambda^{n-1}X_{m+1}$.

Denote by $\mathcal P_{n,m},\mathcal S_{n,m}$ and $\mathcal{PS}_{n,m}$ the set of all primitive, separated or both kinds of copies $P$ of $T_n$ under $F^k$, $k\geqslant 0$, such that $P\subset T_1$ and $F^j(P)\not\subset T_{n+m}$ for all $0\leqslant j < k$. In addition, let $\mathcal O_{n,m}$ be the set of all copies $P\subset T_n$ of $T_n$ such that
\begin{itemize} \item{} $F_{n-1}^k(P)=T_n$ for some $k\in\mathbb Z_+$;
\item{} $F_{n-1}^j(P)\not\subset T_{n+m}$ for all $0\leqslant j<k$.
\end{itemize}
For a copy $P$ of $T_n$ under $F^k$ set
\begin{equation}
\label{XP}  X_P=P\cap F^{-k}(\lambda^{n-1}X_{m+1}).
\end{equation}
  Similarly, set $Y_P=P\cap F^{-k}(Y_n)$. Introduce an \emph{extended set of escaping points}
\begin{equation} Z_{n,m}=\bigcup\limits_{P\in \mathcal O_{n,m}}Y_P.
\end{equation}
Thus, $Z_{n,m}$ is the set of all points in $T_n$ that exit $T_n$  after some number of high-order iterates, to never return, without ever passing through the central subinterval $T_{n+m}$.



Finally, for a copy $Q$ of $T_n$ under $F^k$ set
\begin{equation}
 \label{ZQ} Z_Q=Q\cap F^{-k}(Z_{n,m})
\end{equation}

The next three lemmas are a preparation for the criterion of the existence of a wiled attractor, proved in Subsection $\ref{criterion}$.

\begin{Lm}\label{LmXnm} One has:
$$X_{n+m}=\bigcup\limits_{P\in\mathcal{PS}_{n,m}}X_P.$$
\end{Lm}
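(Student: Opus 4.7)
My plan is to establish the equality $X_{n+m} = \bigcup_{P \in \mathcal{PS}_{n,m}} X_P$ by proving the two inclusions separately; the reverse is essentially immediate while the forward direction carries the substantive content.

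For the inclusion $\bigcup X_P \subseteq X_{n+m}$: fix $P \in \mathcal{PS}_{n,m}$, a copy of $T_n$ under $F^k$, and $x \in X_P = P \cap F^{-k}(\lambda^{n-1}X_{m+1})$. Then $x \in P \subseteq T_1$ and $F^k(x) \in \lambda^{n-1}X_{m+1}$, so by definition there exists $l$ with $F_{n-1}^l(F^k(x)) \in T_{n+m}$. Because $F_{n-1}$ coincides with a specific iterate of $F$ on each branch of its piecewise definition (the $F$-iterate being the first return time to $T_{n-1}$), this translates to $F^{k+N}(x) \in T_{n+m}$ for some $N$, so $x \in X_{n+m}$.

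For the forward inclusion, fix $x \in X_{n+m}$, let $k_0 = \min\{k : F^k(x) \in T_{n+m}\}$, and let $j_1 \ge 0$ be the first time the $F$-orbit of $x$ enters $T_n$. Let $A_1$ be the connected component of $F^{-j_1}(T_n)$ containing $x$; by connectedness and the disjointness of $T_1, J_1$ we have $A_1 \subseteq T_1$, and $A_1$ is a copy of $T_n$ under $F^{j_1}$. Since $F^{k_0}(x) \in T_{n+m} \subset T_n \subset T_{n-1}$ is a return of $F^{j_1}(x)$ to $T_{n-1}$ and $F_{n-1}$ is the first return map to $T_{n-1}$, some iterate $F_{n-1}^l(F^{j_1}(x))$ equals $F^{k_0}(x) \in T_{n+m}$, so $F^{j_1}(x) \in \lambda^{n-1}X_{m+1}$ and hence $x \in X_{A_1}$. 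By the minimality of $j_1$, $F^j(x) \in F^j(A_1) \setminus T_n \subseteq F^j(A_1) \setminus T_{n+m}$ for $0 \le j < j_1$, so $F^j(A_1) \not\subseteq T_{n+m}$. Thus the verification of $A_1 \in \mathcal{PS}_{n,m}$ reduces to showing that $A_1$ is primitive or separated.

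Suppose $A_1$ is not primitive and let $j^* < j_1$ be maximal with $F^{j^*}(A_1) \cap T_n \ne \emptyset$, and put $B = F^{j^*}(A_1) \cap T_n$. Since $F^{j^*}(x) \in F^{j^*}(A_1) \setminus T_n$, the connected set $F^{j^*}(A_1)$ strictly contains $B$ and extends outside $T_n$. By the maximality of $j^*$ the iterates $F^{j^*+i}(A_1)$ for $1 \le i < j_1 - j^*$ are disjoint from $T_n$ and hence avoid the critical point $0 \in T_n$; assuming also $0 \notin F^{j^*}(A_1)$, the composition $F^{j_1-j^*}$ restricts to a diffeomorphism from $F^{j^*}(A_1)$ onto $T_n$, and the strict inclusion $B \subsetneq F^{j^*}(A_1)$ forces $F^{j_1-j^*}(B) \subsetneq T_n$. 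On $B$ the map $F_{n-1}^l$ coincides with a specific $F$-iterate $F^Q$; matching first-return times to $T_{n-1}$ pins down $Q = j_1 - j^*$, so any hypothetical equality $F_{n-1}^l(B) = T_n$ would yield $F^{j_1-j^*}(B) = T_n$, contradicting the strict inclusion just established. Hence $A_1$ is separated. The main obstacle I anticipate is precisely this identification of $F_{n-1}^l|_B$ with the $F$-iterate $F^{j_1-j^*}|_B$: it requires careful bookkeeping of the itineraries through the piecewise branches of $F_{n-1}$ using the scaling identification $f_n(z) = \pm\lambda^n f(z/\lambda^n)$; a subsidiary sub-argument is needed when $0 \in F^{j^*}(A_1)$ (equivalently $0 \in B$), where the diffeomorphism claim breaks and one must invoke the even-unimodal symmetry of $F|_{T_1}$ to recover a comparable strict inclusion.
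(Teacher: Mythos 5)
Your argument diverges from the paper's structurally and has a genuine gap in the forward inclusion. The paper's proof works with copies $Q$ of $T_{n+m}$ and uses Lemma~\ref{LmNesting}: it sets $l$ equal to the first time $F^{l}(Q)\subset T_n$, checks whether $F^l(Q)\subset\lambda^{n-1}X_{m+1}$ (giving a primitive copy via nesting), and if not, passes to the first time $F^j(Q)\subset\lambda^{n-1}X_{m+1}$ (giving a separated copy). You instead work with the point $x$, take $j_1$ to be the first time $F^{j_1}(x)\in T_n$, and then assert that $F^{j_1}(x)\in\lambda^{n-1}X_{m+1}$ because ``some iterate $F_{n-1}^{l}(F^{j_1}(x))$ equals $F^{k_0}(x)$.'' That step is not justified: $F_{n-1}$ is the first-return map to $T_{n-1}$ only on the restricted domain $T_n\cup J_n$, and $T_{n-1}\setminus(T_n\cup J_n)$ contains another full preimage of $T_{n-1}$ (the second choice of $J_n$ in the Lyubich--Milnor construction). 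If an intermediate $T_{n-1}$-return of $F^{j_1}(x)$ lands there, $F_{n-1}$ is undefined at that point and the subsequent $F$-returns to $T_n$ (in particular $F^{k_0}(x)$) are not captured by iterates of $F_{n-1}$; so the first $T_n$-entry of the point need not lie in $\lambda^{n-1}X_{m+1}$ at all. Note that the paper is explicitly careful about exactly this in the companion Lemma~\ref{LmXnIsUnion}, where $l$ is chosen minimal subject to the additional condition ``there exists $r$ with $F_{n-1}^r(F^l(z))=F^k(z)$,'' rather than minimal subject only to $F^l(z)\in T_n$.

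A second, related issue is the separatedness argument. You acknowledge it is incomplete, but the flagged identification ``$F_{n-1}^l\vert_B=F^{j_1-j^*}\vert_B$'' has the same root problem: $F_{n-1}$ is a first-return map to $T_{n-1}$, not to $T_n$, so the $F$-iterate underlying $F_{n-1}^l$ need not coincide with the first return of $B$ to $T_n$ (which is $F^{j_1-j^*}$); moreover, even granting the diffeomorphism argument, you would have to rule out $F_{n-1}^l(B)=T_n$ for every admissible $l$, not just the one corresponding to the first $T_n$-return. The paper sidesteps all of this by using the nesting property to control which copy of $T_n$ is relevant, and by formulating the dichotomy in terms of containment of the copy $Q$ in $\lambda^{n-1}X_{m+1}$ rather than in terms of pointwise entry times. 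The easy inclusion $\bigcup_P X_P\subseteq X_{n+m}$ in your proposal matches the paper's.
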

\begin{proof} Set $X=\bigcup\limits_{P\in\mathcal{PS}_{n,m}}X_P$. Notice that $X_{n+m}$ is the union of all copies of $T_{n+m}$ lying inside $T_1$. At the same time,  $\lambda^{n-1}X_{m+1}$ is the union of all copies of $T_{n+m}$ in $T_n$, and hence, is a subset of $X_{n+m}$, and so is $X$, being a subset of points in copies of $T_{n+m}$ in $T_1$: $X \subset  X_{n+m}$.

We will now show that $X_{n+m} \subset X$.   Let $Q$ be a copy of $T_{n+m}$ under $F^k$.

 Assume first that for the smallest $l\geqslant 0$ with $F^l(Q)\subset T_n$ one has $F^l(Q)\subset \lambda^{n-1}X_{m+1}$. Let $P$ be the copy of $T_n$ under $F^l$ containing $Q$. Then by nesting property and the choice of $l$ we have $F^j(P)\cap T_n=\varnothing$ for $0\leqslant j<l$, therefore $P$ is primitive and, by assumption, $Q\subset X_P\subset X$. 

  Assume now that $F^l(Q)\not\subset \lambda^{n-1}X_{m+1}$. Since $Q$ is a copy of $T_{n+m}$ this implies that $F^l(Q)\cap \lambda^{n-1}X_{m+1}=\varnothing$. Since $F^k(Q) \subset T_{n+m}\subset \lambda^{n-1}X_{m+1}$,
there exists $j \le k$  such that
  \begin{equation}
\label{cont3}    F^j(Q)\subset \lambda^{n-1}X_{m+1}.
  \end{equation}
  Assume that $j$ is the minimal number with this property. Denote by $P$ the copy of $T_n$ under $F^j$ containing $Q$. Since $j$ is the minimal integer for which $(\ref{cont3})$ holds, $F^i(Q) \not \subset T_{n+m}$ for $i < j$. Therefore,  $P \in \cS_{n,m}$ and   $Q\subset X$.
\end{proof}

\begin{Lm}\label{LmDisj0} The elements of the sets from the collection 
$\{X_P\}_{P\in\mathcal{PS}_{n,m}}\cup \{Z_P\}_{P\in\mathcal{PS}_{n,m}}$ are pairwise disjoint subsets of $X_n$.
\end{Lm}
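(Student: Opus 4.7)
My plan is to split the lemma into its two substantive claims: that every $X_P$ and $Z_P$ is contained in $X_n$, and that the combined collection is pairwise disjoint. The containment is immediate: any $P\in\mathcal{PS}_{n,m}$ is by definition a copy of $T_n$ under some $F^k$ with $P\subset T_1$, so $P\subset X_n$ by \eqref{Xn}, and both $X_P$ and $Z_P$ sit inside $P$.

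The main step is disjointness of distinct $P_1,P_2\in\mathcal{PS}_{n,m}$. By Lemma~\ref{LmNesting} the only thing to rule out is strict nesting: suppose $P_1\subsetneq P_2$ with $P_j$ a copy of $T_n$ under $F^{k_j}$, so $k_1>k_2$ and $F^{k_2}(P_1)\subseteq F^{k_2}(P_2)=T_n$. This at once contradicts primitivity of $P_1$, since $F^{k_2}(P_1)\cap T_n\neq\varnothing$ at an intermediate time $k_2<k_1$. If $P_1$ is separated, I would instead pick the largest $j<k_1$ with $F^j(P_1)\cap T_n\neq\varnothing$ (which exists and satisfies $j\geqslant k_2$); by maximality, the intermediate iterates $F^{j+1}(P_1),\ldots,F^{k_1-1}(P_1)$ all miss $T_n$, so $F^{k_1-j}$ restricted to $B:=F^j(P_1)\cap T_n$ coincides with the first-return map $F_{n-1}$, and $F_{n-1}(B)=F^{k_1}(P_1)=T_n$ would contradict separatedness. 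Hence $P_1\cap P_2=\varnothing$, which makes $X_{P_1}\cap X_{P_2}$, $Z_{P_1}\cap Z_{P_2}$, and $X_{P_1}\cap Z_{P_2}$ empty automatically.

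For a single $P\in\mathcal{PS}_{n,m}$, Proposition~\ref{PropKoebeSpace} supplies an inverse branch of $F^k|_P$, so $F^k|_P\colon P\to T_n$ is a homeomorphism and $X_P\cap Z_P=\varnothing$ reduces to $\lambda^{n-1}X_{m+1}\cap Z_{n,m}=\varnothing$ inside $T_n$. This last equality follows from the descriptions of the two sets: a point of $Z_{n,m}$ has, by construction, an $F_{n-1}$-orbit that exits $T_n$ permanently without ever passing through $T_{n+m}$, while a point of $\lambda^{n-1}X_{m+1}$ has an $F_{n-1}$-orbit that does enter $T_{n+m}$. I expect the separated-copy subcase above to be the main obstacle: it requires simultaneously using the nesting of copies of $T_n$ and the first-return interpretation of $F_{n-1}$, with the maximality of $j$ as the decisive ingredient; the remaining pieces reduce to definition-unpacking once the correct reductions are set up.
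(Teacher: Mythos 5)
Your containment $X_P,Z_P\subset X_n$ is fine, and your treatment of the single-$P$ case (reducing $X_P\cap Z_P=\varnothing$ to the disjointness of $\lambda^{n-1}X_{m+1}$ and $Z_{n,m}$ inside $T_n$) matches Part~I of the paper's proof. However, the reduction to ``distinct $P,Q\in\mathcal{PS}_{n,m}$ are disjoint as intervals'' is a genuine gap: a separated copy $P_1\in\mathcal S_{n,m}$ can be properly contained in $T_n$ (or in another element of $\mathcal{PS}_{n,m}$). The paper's Part~II is devoted precisely to this configuration and proves the finer statement $P_1\cap(\lambda^{n-1}X_{m+1}\cup Z_{n,m})=\varnothing$, which Part~III then propagates through the nesting chain; if nested copies could not occur, Part~II would be vacuous.

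The step that fails in your argument is ``$F^{k_1-j}$ restricted to $B$ coincides with the first-return map $F_{n-1}$.'' The definition of a separated copy explicitly requires $F_{n-1}^i(B)\subset T_n\cup J_n$ for all $0\leqslant i<l$ before one may even write $F_{n-1}^l(B)=F^{k_1}(P_1)$. Between times $j$ and $k_1$ the $F$-orbit of $B$ is only required to avoid $T_n$; it may pass through $J_n$ and also through $T_{n-1}\setminus(T_n\cup J_n)$, and in the latter case the $F_{n-1}$-trajectory of $B$ breaks off before the return to $T_n$, so $F^{k_1-j}\big|_B$ is not an iterate of $F_{n-1}$ at all. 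That escape-and-return behaviour is exactly what ``separated'' is designed to encode, so no contradiction with separatedness arises and nesting is not ruled out. (A secondary point: even when the $F_{n-1}$-itinerary is defined, one only has $F^{k_1-j}(B)\subseteq T_n$, with equality requiring $F^j(P_1)\subset T_n$.) What your proof is missing is precisely the content of Part~II, which handles separated copies inside $T_n$ directly instead of trying to exclude them.
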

\begin{proof} For convenience, the proof is split into three parts.

  \vspace{1mm}

\noindent   {\bf Part I.} Let us consider now the simplest case $P=T_n$ (the primitive copy of $T_n$ under $F^0=\id$). By definition, we have $X_{T_n}=\lambda^{n-1}X_{m+1}$, $Z_{T_n}=Z_{n,m}$. Thus, we need to show that $\lambda^{n-1}X_{m+1}$ is disjoint from $Z_{n,m}$. Assume the contrary. Then there exists $z\in T_n$ and  $r,t\geqslant 0$ such that
  \begin{equation}
  \label{cont4}  F_{n-1}^r(z)\in T_{n+m}\text{ and }F_{n-1}^t(z)\in Y_n.
  \end{equation}
Assume that $r$ and $t$ are minimal such that  $(\ref{cont4})$ holds.  Moreover, let $P$ be the copy of $T_n$ under $F_{n-1}^t$ containing $z$. Then $F_{n-1}^j(P)\not\subset T_{n+m}$ for all $0\leqslant j\leqslant t$.  The definition of $Y_n$ implies that $t\geqslant r$. Since $F_{n-1}^r(P)\not\subset T_{n+m}$ and $F_{n-1}^r(P)\cap T_{n+m}$ contains $z$ (and so is nonempty) by nesting property we have $F_{n-1}^r(P)\supsetneq T_{n+m}$. Observe that the composition $F_{n}\circ F_{n+1}\circ \cdots \circ F_{n+m-1}$ maps $T_{n+m}$ into $T_n$ and $\partial T_{n+m}$ into $\partial T_n$. In addition, this composition can be written as $F_{n-1}^l$ for some $l$. On one hand, we obtain that $F_{n-1}^{r+l}(z)\in T_n$ and so $r+l\leqslant t$. On the other hand, $\partial T_{n+m}$ are interior points of $F_{n-1}^r(P)$, therefore one of the points from $\partial T_n$ is an interior point of $F_{n-1}^{r+l}(P)$. This contradicts to the fact that $F_{n-1}^{t-r-l}$ is defined on $F_{n-1}^{r+l}(P)$. This contradiction shows that $\lambda^{n-1}X_{m+1}$ is disjoint from $Z_{n,m}$.

  \vspace{1mm}

\noindent {\bf Part II.} Next, let us show that for any separated copy $Q\subset T_n$ of $T_n$ under $F^k$ such that $Q\in\mathcal S_{n,m}$ one has $Q\cap (\lambda^{n-1}X_{m+1}\cup Z_{n,m})=\varnothing$. Assume the contrary. Let $z\in Q\cap (\lambda^{n-1}X_{m+1}\cup Z_{n,m})$.
\vskip 0.2cm\noindent
       {\bf a)} $z\in \lambda^{n-1}X_{m+1}$. Then $F_{n-1}^r(z)\in T_{n+m}$. By definition, one can write $F_{n-1}^r$ as a restriction of $F^l$ for some $l$.

       \vspace{1mm}

       \noindent $a1)$ Assume that $k\geqslant l$. If $F_{n-1}^r(Q)$ intersects the postcritical set of $F_{n-1}$ taking into account that $F_{n-1}$ is the first-return map from $T_n \cup J_n$ we obtain that $T_n \supset F^k(Q)=F_{n-1}^s(Q)$ for some $s\geqslant r$ which implies that $Q$ is not separated. In the opposite case by nesting property we have $F_{n-1}^r(Q)\subset T_{n+m}$ which implies that $Q\notin \mathcal{S}_{n,m}$.

       \vspace{1mm}

       \noindent $a2)$ Assume that $k<l$. Then again $T_n=F^k(Q)=F_{n-1}^s(Q)$ for some $s$ and $Q$ is not separated.
\vskip 0.2cm\noindent
{\bf b)} $z\in Z_{n,m}$. Then $F_{n-1}^r(z)\in Y_n$ for some $r\geqslant 0$. Let $l>0$ be such that $F_{n-1}^r$ is a restriction of $F^l$. By definition of $Y_n$, since $F^k(z)\in F^k(Q)\subset T_n$ we obtain that $k\leqslant l$. Since $F_{n-1}$ is the first-return map on $T_n\cup J_n$ it follows that $F^k(Q)=F_{n-1}^s(Q)$ for some $s\leqslant r$. This implies that $Q$ is not separated.
\vskip 0.2cm\noindent
Thus, in all cases we obtain a contradiction which shows that $Q\cap (\lambda^{n-1}X_{m+1}\cup Z_{n,m})=\varnothing$ for all $Q\in\mathcal S_{n,m}$.



  \vspace{1mm}

\noindent {\bf Part III.} Now we are ready to prove the general case of Lemma \ref{LmDisj0}. Let $P,Q\in\mathcal{PS}_{n,m}$ be copies of $T_n$ under $F^k$ and $F^l$ respectively and $A,B\in\{X,Z\}$ be such that $A_P\cap B_Q\neq\varnothing$ (here, $A_P \in \{X_P,Z_P\}$ and $B_Q \in \{X_Q,Z_Q\}$, see $(\ref{XP})$ and $(\ref{ZQ})$).

Without loss of generality we may assume that $k\geqslant l$. Observe that $F^l(B_Q)\subset B_{T_n}$, where $B_{T_n}=\lambda^{n-1}X_{m+1}$ by $(\ref{XP})$, if $B=X$, and $B_{T_n}=Z_{n,m}$ by $(\ref{ZQ})$, if $B=Y$. We have that $F^l(P)$ is a subset of a copy $R$ of $T_n$ under $F^{k-l}$. The definitions of primitive and separated copies imply that either $R$ is separated and moreover $R\in\mathcal S_{n,m}$, or $R=T_n$ and $k=l$. In the first case $R\cap B_{T_n}=\varnothing$ by Part II. In the second case $A_R=A_{T_n}$ coincides with either  $\lambda^{n-1}X_{m+1}$ or $Z_{n,m}$. By Part I from $A_{T_n}\cap B_{T_n}\neq \varnothing$ we conclude that $A=B$. Since $k=l$ and $P\cap Q\neq\varnothing $ it follows that $P=Q$ and thus $A_P=B_Q$. This completes the proof.

\end{proof}
\begin{Lm}\label{LmXnIsUnion} The subset
\begin{equation*} \bigcup\limits_{P\in\mathcal{PS}_{n,m}}X_P\cup \bigcup\limits_{P\in\mathcal{PS}_{n,m}}Z_P\subset X_n
\end{equation*} has full Lebesgue measure in $X_n$
\end{Lm}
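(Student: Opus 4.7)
By Lemma~\ref{LmXnm} one has $\bigcup_{P \in \mathcal{PS}_{n,m}} X_P = X_{n+m}$, so it suffices to show that the complement
\[E := (X_n \setminus X_{n+m}) \setminus \bigcup_{P \in \mathcal{PS}_{n,m}} Z_P\]
has zero Lebesgue measure. By construction, $E$ consists of those $x \in X_n$ whose forward $F$-orbit enters $T_n$ but neither visits $T_{n+m}$ nor escapes $T_n$ permanently.

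Given $x \in X_n \setminus X_{n+m}$, I would set $k_0 := \min\{k \ge 0 : F^k(x) \in T_n\}$ and let $P_0$ be the copy of $T_n$ under $F^{k_0}$ containing $x$. Minimality of $k_0$ together with Lemma~\ref{LmNesting} shows that $P_0$ is primitive; since $F^j(x) \notin T_{n+m}$ for all $j$ forces $F^j(P_0) \not\subset T_{n+m}$ for $0 \le j < k_0$, one has $P_0 \in \mathcal{PS}_{n,m}$. Write $y_0 := F^{k_0}(x) \in T_n$; the $F_{n-1}$-orbit of $y_0$ avoids $T_{n+m}$, and I would split into two cases:
\begin{itemize}
\item[(i)] Some iterate $F_{n-1}^s(y_0)$ lies in $Y_n$. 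Letting $Q$ denote the copy of $T_n$ under $F_{n-1}^s$ containing $y_0$, one has $F_{n-1}^j(Q) \ni F_{n-1}^j(y_0) \notin T_{n+m}$ for $0 \le j < s$, so $Q \in \mathcal{O}_{n,m}$, whence $y_0 \in Y_Q \subset Z_{n,m}$ and $x \in Z_{P_0}$.
\item[(ii)] The orbit is infinite and never reaches $Y_n$, i.e.\ $y_0$ lies in the set $S := \bigcap_{k \ge 0} F_{n-1}^{-k}\bigl((T_n \cup J_n) \setminus T_{n+m}\bigr)$.
\end{itemize}
Since $F$ is a local diffeomorphism away from $0$, pulling a null subset of $T_n$ back along $F^{k_0}$ for each $P_0$ and summing over the countable family $\mathcal{PS}_{n,m}$ yields a null set. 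Thus the entire proof reduces to the claim $|S| = 0$.

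The hard step is this measure estimate. The key observation is that $S$ is forward-invariant under $F_{n-1}$ and sits at uniform distance from the critical point $0 \in T_{n+m}$; on $S$ the dynamics is therefore critically safe, and the inverse branches of $F_{n-1}^k$ along $\mathcal{O}_{n,m}$-copies enjoy uniformly bounded distortion, by the same Koebe-space argument that proves Proposition~\ref{PropKoebeSpace} and Corollary~\ref{CoKoebeDist}. I would run the standard no-density-point contradiction: if $y_0 \in S$ were a Lebesgue density point, then for each $k$ the copy $Q_k \ni y_0$ of $T_n$ under $F_{n-1}^k$ would lie in $\mathcal{O}_{n,m}$, and bounded distortion would give $|Q_k \cap F_{n-1}^{-k}(T_{n+m})| \ge \mathrm{const}\cdot |Q_k|$ with a constant independent of $k$; since $F_{n-1}^{-k}(T_{n+m}) \cap S = \varnothing$, a uniformly positive proportion of $Q_k$ lies outside $S$, contradicting density once $\diam Q_k \to 0$. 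The principal technical obstacle is thus two-fold: (a) transferring Corollary~\ref{CoKoebeDist} to $\mathcal{O}_{n,m}$-copies via a Koebe space adapted to the nearest postcritical points of $T_{n+m}$ rather than $T_n$; and (b) verifying $\diam Q_k \to 0$, which ultimately reduces to uniform expansion of $F_{n-1}$ away from $T_{n+m}$---a feature consistent with hyperbolicity of the renormalization cycle.
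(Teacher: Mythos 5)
Your combinatorial reduction---splitting $X_n$ into $X_{n+m}$ via Lemma~\ref{LmXnm}, the $Z_P$-part, and a residual set---has the same architecture as the paper's proof. The essential difference lies in how the residual set is shown to be null. The paper opens by invoking a known dynamical input: for almost all points $z\in T_1\cup J_1$, the orbit of $z$ either approaches the critical point arbitrarily closely or escapes. The case analysis then runs on the full-measure set where this dichotomy holds: either the orbit eventually enters $T_{n+m}$ (yielding $z\in X_P$) or it never does and escapes (yielding $z\in Z_P$). Your proposal tries to re-derive this dichotomy inside the proof, reducing to the claim $|S|=0$ for $S=\bigcap_k F_{n-1}^{-k}\bigl((T_n\cup J_n)\setminus T_{n+m}\bigr)$ and sketching a density-point/Koebe argument. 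That sketch is the right idea, but you explicitly leave the two load-bearing steps---transferring bounded distortion to $\mathcal{O}_{n,m}$-preimages of $T_{n+m}$, and establishing $\diam Q_k\to 0$---as unresolved \lq\lq technical obstacles.\rq\rq\ Those steps are precisely the content of the Ma\~n\'e-type theorem the paper cites as known, so as written the proposal has a genuine hole exactly where the paper has an assertion of a standard result.

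There is a second, subtler issue in your case analysis. You take $k_0=\min\{k: F^k(x)\in T_n\}$, let $P_0\ni x$ be the copy of $T_n$ under $F^{k_0}$, and in case (i) argue $y_0=F^{k_0}(x)\in Z_{n,m}$ once its $F_{n-1}$-orbit hits $Y_n$. But the $F_{n-1}$-orbit of $y_0$ is only defined while its iterates stay in $T_n\cup J_n$; it may terminate before reaching $Y_n$ while the $F$-orbit of $y_0$ nevertheless re-enters $T_n$ by a route not captured by iterating $F_{n-1}$. Your dichotomy (i)/(ii) does not cover this. The paper sidesteps the problem by first fixing $k$ to be the \emph{last} time $F^k(z)\in T_n$ (so that $F^k(z)\in Y_n$), and then choosing the \emph{minimal} $l$ with $F^l(z)\in T_n$ and $F_{n-1}^r(F^l(z))=F^k(z)$ for some $r\ge 0$; such $l$ exists (take $l=k$, $r=0$) and may well exceed your $k_0$. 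You would need either to adopt this choice or to argue that the first entry time always admits an $F_{n-1}$-connection to the terminal point, which is not automatic. Relatedly, the claim that minimality of $k_0$ together with Lemma~\ref{LmNesting} forces $P_0$ to be primitive deserves a line more: nesting only rules out linking, not proper containment, so a short argument tied to the Fibonacci combinatorics is needed to exclude a copy of $T_n$ under $F^{k_0-j}$, $k_0-j>0$, properly containing $T_n$.
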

\begin{proof} Notice that for almost all points $z\in T_1\cup J_1$ the iterate of $z$ either approaches the critical point arbitrarily close or escapes. Let $z\in X_n$ be such that one of the above two options happen.
 \vskip 0.2cm\noindent
        {\bf a)} $F^k(z)\in T_{n+m}$ for some $k\in\mathbb N$. Let $k$ be the minimal such number. Observe that the set of $l\geqslant 0$ such that $F^l(z)\in T_n$ and  there exists $r\geqslant 0$ with $F_{n-1}^r(F^l(z))=F^k(z)$ is non-empty (indeed, the equality is true for $l=k$ and $r=0$). Fix $l$ to be the minimal number in this set. Evidently, $z$ belongs to either primitive or separated copy $P$ of $T_n$ under $F^l$.
        Moreover,
\begin{equation*} F^l(z)\in \left(\bigcup\limits_{t\geqslant 0}F_{n-1}^{-t}(T_{n+m})\right)\cap T_n=\lambda^{n-1}X_{m+1}.
\end{equation*} Thus, $z\in X_P$. Since $k$ is minimal with $F^k(z)\in T_{n+m}$ we have that $F^j(P)\not\subset T_{n+m}$ for $0\leqslant l$. Therefore, $P\in\mathcal{PS}_{n,m}$.
\vskip 0.2cm\noindent
{\bf b)} $F^k(z)\not\in T_{n+m}$ for all $k\geqslant 0$ and $F^l(z)\not\in T_0$ for some $l\in\mathbb N$, \ie $z$ is escaping. Since $z\in X_n$ there exists $k\geqslant 0$ such that $F^k(z)\in T_n$. Let $k$ be the maximal such number. Then $F^k(z)\in Y_n$. As in the case $a)$ find minimal $l\geqslant 0$ such that $F_{n-1}^r(F^l(z))=F^k(z)$ for some $r\geqslant 0$ and $F^l(z)\in T_n$. Then $z$ belongs to a copy $P$ of $T_n$ under $F_l$ such that $P$ is either primitive or separated. In addition, $F^j(z)\notin T_{n+m}$ for all $j\geqslant 0$ implies that $P\in\mathcal{PS}_{n,m}$. Let $Q$ be the copy of $T_n$ under $F_{n-1}^r$ containing $w=F^l(z)$. We have $Q\subset T_n$ and $F_{n-1}^r(Q)=T_n$. The conditions of the case $b)$ imply that $F_{n-1}^j(w)\notin T_{n+m}$. Thus, $w\in Z_{n,m}$ and $z\in Z_P$.
\end{proof}

\subsection{Criterion for existence of a wild attractor } \label{criterion}
In this section we will derive some recursive estimates for $\eta_n$ and $\zeta_n$, and use them to formulate a criterion for the existence of a wild attractor.

Set $\zeta_{n,m}=|Z_{n,m}|/|T_n|$.
\begin{Th}\label{RecursiveThm} Let $C$ be the constant from Corollary \ref{CoKoebeDist}. Then for any $n,m\in\mathbb N$ one has:
\begin{equation}\label{EqRecursiveMain}\frac{\eta_n\eta_{m+1}}{\eta_{m+1}+C\zeta_{n,m}}\leqslant \eta_{n+m}\leqslant \frac{\eta_n\eta_{m+1}}{\eta_{m+1}+C^{-1}\zeta_{n,m}}.\end{equation}
\end{Th}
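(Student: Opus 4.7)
The approach is to express $|X_{n+m}|$ and $|X_n|$ as sums indexed by the same family $\mathcal{PS}_{n,m}$ of primitive and separated copies of $T_n$, and then compare them copy-by-copy using Corollary~\ref{CoKoebeDist}.

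First I would invoke Lemmas~\ref{LmXnm}, \ref{LmDisj0}, \ref{LmXnIsUnion} to write
\begin{equation*}
|X_{n+m}| = \sum_{P \in \mathcal{PS}_{n,m}} |X_P|, \qquad |X_n| = \sum_{P \in \mathcal{PS}_{n,m}} \bigl(|X_P| + |Z_P|\bigr),
\end{equation*}
so that, after dividing by $|T_1|$,
\begin{equation*}
\frac{\eta_{n+m}}{\eta_n} \;=\; \frac{\sum_P |X_P|}{\sum_P |X_P| + \sum_P |Z_P|} \;=\; \frac{1}{1 + \sum_P |Z_P|/\sum_P |X_P|}.
\end{equation*}
Both bounds in \eqref{EqRecursiveMain} will follow from two-sided control of $\sum_P |Z_P|/\sum_P |X_P|$ in terms of $\zeta_{n,m}/\eta_{m+1}$.

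To get such control, for each copy $P \in \mathcal{PS}_{n,m}$ of $T_n$ under some $F^k$, Proposition~\ref{PropKoebeSpace} supplies an analytic inverse branch $\phi_P = (F^k|_P)^{-1}$ defined on an interval strictly containing $T_n$, and Corollary~\ref{CoKoebeDist} gives $|\phi_P'(x)|/|\phi_P'(y)| \le C$ for all $x,y \in T_n$. Since $X_P = \phi_P(\lambda^{n-1} X_{m+1})$ and $Z_P = \phi_P(Z_{n,m})$ are images under the \emph{same} $\phi_P$ of subsets of $T_n$ whose relative Lebesgue measures in $T_n$ are exactly $\eta_{m+1}$ and $\zeta_{n,m}$, expressing $|X_P|$ and $|Z_P|$ as integrals of the common integrand $|\phi_P'|$ and bounding the integrals by $\inf_{T_n} |\phi_P'|$ and $\sup_{T_n} |\phi_P'|$ yields (only one factor of $C$, since the common factor $\inf |\phi_P'|$ or $\sup|\phi_P'|$ cancels in the ratio) the key termwise estimate
\begin{equation*}
C^{-1}\,\frac{\zeta_{n,m}}{\eta_{m+1}} \;\le\; \frac{|Z_P|}{|X_P|} \;\le\; C\,\frac{\zeta_{n,m}}{\eta_{m+1}}.
\end{equation*}
Summing the inequality $|Z_P| \le C(\zeta_{n,m}/\eta_{m+1})|X_P|$ (and its lower counterpart) over $P \in \mathcal{PS}_{n,m}$ gives the same two-sided bounds for $\sum_P |Z_P|/\sum_P |X_P|$, and inserting these into the ratio formula above yields \eqref{EqRecursiveMain} with the same constant $C$ after multiplying through by $\eta_n$.

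The entire argument is essentially bookkeeping once the Koebe-type distortion estimate is in force; the genuinely substantive step has already been carried out in Proposition~\ref{PropKoebeSpace}, where the primitive/separated dichotomy was precisely engineered to guarantee that $0 \notin F^j(P)$ for $0 \le j < k$, so that the inverse branch $\phi_P$ extends analytically across a neighbourhood of $T_n$ and the Koebe distortion bound applies uniformly to all the copies simultaneously. Thus the only point requiring any care in writing the proof is to verify the termwise estimate — i.e., to confirm that only one factor of $C$ is needed — and then to perform the algebraic manipulation above.
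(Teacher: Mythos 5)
Your proof is correct and follows essentially the same route as the paper: decompose $|X_{n+m}|=\sum_P |X_P|$ and $|X_n|=\sum_P(|X_P|+|Z_P|)$ over $P\in\mathcal{PS}_{n,m}$ (Lemmas~\ref{LmXnm}, \ref{LmDisj0}, \ref{LmXnIsUnion}), establish the termwise bound $C^{-1}\zeta_{n,m}/\eta_{m+1}\le |Z_P|/|X_P|\le C\zeta_{n,m}/\eta_{m+1}$ from Corollary~\ref{CoKoebeDist}, and sum. The only cosmetic difference is that you spell out the integral-comparison reason for getting a single factor of $C$, whereas the paper states that termwise estimate directly; the underlying argument is identical.
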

\begin{proof} By Lemma \ref{LmXnm} we have:
$$X_{n+m}=\bigcup\limits_{P\in\mathcal{PS}_{n,m}}X_P.$$ Using Lemma \ref{LmDisj0} we obtain that $$|X_n|= \sum\limits_{P\in\mathcal{PS}_{n,m}} (|X_P|+|Z_P|),$$ where, as before, $|\cdot|$ stands for the Lebesgue measure.
From Corollary \ref{CoKoebeDist} we deduce that for every $P\in\mathcal{PS}_{n,m}$ one has:
$$ C|Z_{n,m}|/|\lambda^{n-1}X_{m+1}|\geqslant |Z_P|/|X_P|\geqslant C^{-1}|Z_{n,m}|/|\lambda^{n-1}X_{m+1}|.$$
It follows that \begin{equation*}|X_n|\leqslant \sum\limits_{P\in\mathcal{PS}_{n,m}}|X_P|\cdot\left(1+C\frac{|Z_{n,m}|}{\lambda^{n-1}|X_{m+1}|}\right)=|X_{n+m}|
\cdot\left(1+C\frac{\zeta_{n,m}}{\eta_{m+1}}\right).
\end{equation*} We arrive at:
\begin{equation*}
\frac{\eta_{n+m}}{\eta_n}=  \frac{|X_{n+m}|}{|X_n|}\geqslant \frac{\eta_{m+1}}{\eta_{m+1}+C\zeta_{n,m}},
\end{equation*} which proves the left hand side inequality of \eqref{EqRecursiveMain}. The right hand side is proven similarly.
\end{proof}
Recall that \begin{equation}Z_n=\left(\bigcup\limits_{k\geqslant 0}F_{n-1}^{-k}(Y_n)\right)\cap T_n,\;\;\zeta_n=\frac{|Z_n|}{|T_n|}.\end{equation}
Observe that $Z_n\supset Z_{n,m}$ for all $m$ and so $\zeta_n\geqslant \zeta_{n,m}$.

Theorem $\ref{RecursiveThm}$ allows one to provide  criteria for existence and non-existence of the wild attractor.

Out first criterion is the following

\begin{Prop}\label{PropWild} Assume that $\eta_n/\zeta_n>C$ for some $n$. Then $\inf\limits_{n\in\mathbb N}\eta_n>0$ and $\lim\limits_{n\to\infty}\zeta_n=0$. Moreover, the Fibonacci map under consideration has a wild attractor.
\end{Prop}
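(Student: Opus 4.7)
The plan is to bootstrap from the two-sided recursion of Theorem~\ref{RecursiveThm} combined with the monotonicity of $\eta_n$, which follows from the nesting $X_{n+1}\subset X_n$ (any orbit reaching $T_{n+1}\subset T_n$ already reaches $T_n$). Set $L:=\lim_{n\to\infty}\eta_n=\inf_n\eta_n$; note that $X_{m+1}\supset T_{m+1}$ yields $\eta_{m+1}>0$, so the divisions below are legal. The core of the argument is to show $L>0$; once this is established, $\zeta_n\to 0$ falls out of the opposite side of \eqref{EqRecursiveMain}, and the wild attractor is obtained from the standard dichotomy of $\omega$-limit sets for maps of class $\cA^\pm$.

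To prove $L>0$ I argue by contradiction. Assume $L=0$. Using the trivial bound $\zeta_{n_0,m}\leqslant\zeta_{n_0}$ (immediate from $Z_{n_0,m}\subset Z_{n_0}$) in the left-hand inequality of \eqref{EqRecursiveMain} at $n=n_0$, and dividing by $\eta_{m+1}$, one obtains
$$\frac{\eta_{n_0+m}}{\eta_{m+1}}\ \geqslant\ \frac{\eta_{n_0}}{\eta_{m+1}+C\zeta_{n_0}}\ \xrightarrow[m\to\infty]{}\ \frac{\eta_{n_0}}{C\zeta_{n_0}}\ >\ 1$$
by the hypothesis. Monotonicity however forces $\eta_{n_0+m}/\eta_{m+1}\leqslant 1$ for every $m$, a contradiction; hence $L>0$.

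With $L>0$ in hand I rearrange the right-hand inequality of \eqref{EqRecursiveMain} as
$$\zeta_{n,m}\ \leqslant\ C\,\eta_{m+1}\,\frac{\eta_n-\eta_{n+m}}{\eta_{n+m}},$$
and let $m\to\infty$ with $n$ fixed. Since $\eta_{m+1},\eta_{n+m}\to L>0$, this gives $\lim_m\zeta_{n,m}\leqslant C(\eta_n-L)$. The sets $Z_{n,m}$ are monotone increasing in $m$ (the constraint defining $\mathcal{O}_{n,m}$ weakens as $T_{n+m}$ shrinks) and they exhaust $Z_n$ modulo a null set: any fixed return copy $P$ of $T_n$ produces intermediate intervals $F_{n-1}^j(P)$ of positive length, so for $m$ large none of them can be a subset of $T_{n+m}$, and equivalently any orbit escaping $T_n$ cannot visit every $T_{n+m}$ (otherwise $0\in\omega(x)$ would force return to $T_n$). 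Therefore $\zeta_{n,m}\nearrow\zeta_n$ and $\zeta_n\leqslant C(\eta_n-L)\to 0$. Finally the set $E:=\bigcap_n X_n$ has Lebesgue measure at least $L\,|T_1|>0$; for $x\in E$ one has $0\in\omega(x)$ and therefore $\omega(x)\supseteq\overline{\cP}=\cC$. By the classification of measure-theoretic attractors for class-$\cA^\pm$ maps (no non-repelling cycles, no wandering intervals, topological attractor a cycle of segments disjoint from $0$, as recalled in the introduction via \cite{Guck-1979}), the only option for $\omega(x)$ compatible with $0\in\omega(x)$ is $\omega(x)=\cC$ for a.e.\ such $x$; thus $E\subset B(\cC)$ modulo a null set, so $B(\cC)$ has positive Lebesgue measure and $\cC$, being distinct from the cycle of segments, is a wild attractor.

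The main conceptual obstacle is, perhaps counter-intuitively, not in the quantitative bootstrap (Steps~1 and 2 are essentially one-line rearrangements of \eqref{EqRecursiveMain} paired with the monotonicity of $\eta_n$) but in the final measure-theoretic identification: upgrading ``$x$ enters every $T_n$'' to ``$\omega(x)\subseteq\cC$'' on a full measure subset of $E$ relies on nontrivial background from one-dimensional real dynamics and must be invoked rather than derived.
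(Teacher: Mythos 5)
Your proof is correct and follows essentially the same route as the paper's: the contradiction argument for $\inf_n\eta_n>0$ uses the left-hand side of \eqref{EqRecursiveMain} together with $\zeta_{n_0,m}\leqslant\zeta_{n_0}$ and the monotonicity of $\eta_n$, exactly as in the paper (you just divide by $\eta_{m+1}$ where the paper keeps things multiplied out), and the derivation of $\zeta_n\to 0$ rearranges the right-hand side of \eqref{EqRecursiveMain} and lets $m\to\infty$ using $\zeta_{n,m}\nearrow\zeta_n$, again as in the paper. Your final step is a more explicit unpacking of the paper's terse observation that $\bigcap_k X_k$ has positive measure and consists of points whose orbits accumulate on the critical point; the substance is the same.
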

\begin{proof} Let $\gamma=C\zeta_n/\eta_n<1$ for some $n$. Fix this number $n$. By \eqref{EqRecursiveMain}, for any $m$ we have:
\begin{equation*} \eta_{n+m}\geqslant \frac{\eta_{m+1}}{\frac{\eta_{m+1}}{\eta_n}+C\frac{\zeta_{n,m}}{\eta_n}}\geqslant \frac{\eta_{m+1}}{\frac{\eta_{m+1}}{\eta_n}+\gamma}.\end{equation*} If $\inf\eta_k=0$ then for some $m$ we have $\eta_{m+1}/\eta_n+\gamma<1$ and therefore $\eta_{n+m}>\eta_{m+1}$ which is impossible, since $\eta_k$ is decreasing. Thus, $\inf\eta_k>0$. This means that $\cap_{k\in\mathbb N} X_k$ has positive measure. But this set is precisely the set of points in $T_1$ whose orbits approach the critical point arbitrarily close.

Further, since $\eta_k$ is non-increasing, there exists $\eta=\lim\limits_{k\to\infty}\eta_k>0$. From \eqref{EqRecursiveMain} we have:
$$C^{-1}\zeta_{k,m}\leqslant \frac{\eta_k\eta_m}{\eta_{k+m}}-\eta_m.$$ Taking limit when $m\to\infty$ we obtain $C^{-1}\zeta_k\leqslant \eta_k-\eta$. It follows that $\lim\limits_{k\to \infty}\zeta_k=0$. This completes the proof.
\end{proof}

On the other hand, we have the following

\begin{Prop}\label{NoWildProp} Assume that $\eta_n/\zeta_n<C^{-1}$ for some $n$. Then $\eta_n$ converges to zero exponentially fast when $n\to\infty$ and $\inf\limits_{n\in\mathbb N}\zeta_n>0$. Moreover, Fibonacci map under consideration does not have a wild attractor.
\end{Prop}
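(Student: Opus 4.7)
The plan is to exploit the two directions of \eqref{EqRecursiveMain} separately: the upper estimate, together with the hypothesis $\eta_{n_0}/\zeta_{n_0}<C^{-1}$, will drive exponential decay of $\eta_n$; the lower estimate, fed back into this decay, will then produce a uniform positive lower bound on $\zeta_n$.

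\textbf{Step 1 (geometric decay of $\eta_n$).}  Let $n_0$ be the index given by the hypothesis. Because the family $\mathcal O_{n_0,m}$ is monotone in $m$ and exhausts the defining family of $Z_{n_0}$, we have $\zeta_{n_0,m}\nearrow\zeta_{n_0}$, and hence $C^{-1}\zeta_{n_0,m}\ge(1+\delta)\eta_{n_0}$ for some $\delta>0$ and all $m\ge m_0$. Inserting this into the upper half of \eqref{EqRecursiveMain} and dropping the non-negative term $\eta_{m+1}$ from the denominator yields
\begin{equation*}
\eta_{n_0+m}\ \leqslant\ \frac{\eta_{n_0}\,\eta_{m+1}}{\eta_{m+1}+(1+\delta)\eta_{n_0}}\ \leqslant\ \frac{\eta_{m+1}}{1+\delta}\qquad(m\ge m_0).
\end{equation*}
Note that the hypothesis forces $n_0\ge 2$, since $\eta_1=1$ and $\zeta_1\le 1<C$ preclude $n_0=1$. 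Re-indexing gives $\eta_{k+(n_0-1)}\le(1+\delta)^{-1}\eta_k$ for large $k$, and iteration produces exponential decay of $\eta_n$ at rate at least $(1+\delta)^{-1/(n_0-1)}$.

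\textbf{Step 2 (uniform lower bound on $\zeta_n$ and conclusion).}  By Step 1 there exists a constant $M\in\mathbb N$, depending only on $n_0$ and $\delta$, such that $\eta_{n+M}\le\tfrac12\eta_n$ for every sufficiently large $n$.  Pick the smallest $m(n)\in\{1,\dots,M\}$ with $\eta_{n+m(n)}\le\tfrac12\eta_n$.  The lower half of \eqref{EqRecursiveMain} at $(n,m(n))$ then reads
\begin{equation*}
C\,\zeta_{n,m(n)}\ \geqslant\ \eta_{m(n)+1}\!\left(\frac{\eta_n}{\eta_{n+m(n)}}-1\right)\ \geqslant\ \eta_{m(n)+1}\ \geqslant\ \eta_{M+1}>0.
\end{equation*}
Since $\zeta_n\ge\zeta_{n,m(n)}$, this yields $\zeta_n\ge\eta_{M+1}/C$ uniformly for all large $n$; the finitely many remaining indices give strictly positive $\zeta_n$ by the same argument with a possibly larger choice of $m$, so $\inf_n\zeta_n>0$.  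To rule out a wild attractor, observe that any $x\in B(\mathcal C)$ satisfies $\omega(x)\subset\mathcal C$, and by minimality of $F|_\mathcal C$ coming from Fibonacci combinatorics, $\omega(x)=\mathcal C\ni 0$, so the orbit of $x$ enters every $T_n$. Hence $B(\mathcal C)\subset\bigcap_n X_n$, whose Lebesgue measure is at most $|T_1|\lim_n\eta_n=0$.

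\textbf{Main obstacle.}  The delicate ingredient is the identity $Z_n=\bigcup_m Z_{n,m}$ (mod null sets), used in Step 1 to pass from $\zeta_{n_0,m}$ to $\zeta_{n_0}$. Given $z\in Z_n$ trapped in a copy $P$ with $F_{n-1}^k(P)=T_n$, one must show that each intermediate image $F_{n-1}^j(P)$, $0\le j<k$, is an interval of positive length, so that for $m$ large enough none of them can be contained in the tiny interval $T_{n+m}$, putting $P$ into $\mathcal O_{n,m}$ and $z$ into $Z_{n,m}$. This is a geometric statement about the return dynamics; granted it, the remainder of the argument is an elementary iteration of the recursive estimate.
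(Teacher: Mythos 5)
Your proof is correct and follows essentially the same strategy as the paper: use $\zeta_{n,m}\nearrow\zeta_n$ together with the upper half of \eqref{EqRecursiveMain} to get a uniform contraction $\eta_{n_0+m}\le(1+\delta)^{-1}\eta_{m+1}$, hence exponential decay of $\eta_n$, and then feed that decay back into the lower half to bound $\zeta_n$ away from zero. One small slip at the very end: since $\bigcap_n X_n\subset T_1$ while $B(\cC)$ need not lie in $T_1$, the inclusion should be $B(\cC)\subset\bigcup_{k\ge 0}F^{-k}\bigl(\bigcap_n X_n\bigr)$, a countable union of null sets — which still gives $|B(\cC)|=0$. Also worth noting: in Step 2 you pick the lag $M$ directly from the geometric decay established in Step 1, whereas the paper instead introduces $R=\liminf_k\eta_k^{1/k}>0$ (via supermultiplicativity from the left-hand inequality) and extracts a suitable $m$ from $R$, then appeals to monotonicity of $\zeta_n$; your variant is a bit more direct but morally the same argument.
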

\begin{proof} Fix $n$ and $\gamma$ such that $C\eta_n/\zeta_n<\gamma<1$. Observe that $\zeta_{n,m}$ is increasing in $m$ and converges to $\zeta_n$ when $m\to\infty$. In particular, there exists $m_0$ such that $C\eta_n/\zeta_{n,m}<\gamma$ for all $m\geqslant m_0$. By \eqref{EqRecursiveMain} we have for all $m\geqslant m_0$:
\begin{equation*} \eta_{n+m}\leqslant \frac{\eta_{m+1}}{\frac{\eta_{m+1}}{\eta_n}+C^{-1}\frac{\zeta_{n,m}}{\eta_n}}\leqslant \gamma\eta_{m+1}.\end{equation*} This implies that $\eta_k$ converges to zero (exponentially fast) when $k\to\infty$. It follows that $\cap_{k\in\mathbb N} X_k$ has zero Lebesgue measure and the Fibonacci map does not have a wild attractor.

Further, by left hand side of \eqref{EqRecursiveMain} we have for any $k,m\in \mathbb N$:
 \begin{equation}\label{EqZetakFromBelow}C\zeta_k\geqslant C\zeta_{k,m}\geqslant \left(\frac{\eta_k}{\eta_{k+m}}-1\right)\eta_{m+1}.\end{equation} Let $R=\liminf\limits_{k\to\infty}\eta_k^{1/k}$. Observe that $R>0$, since $\eta_{k+m}\geqslant C^{-1}\eta_k\eta_m$ for every $k,m\in\mathbb N$ (again, by left hand side of \eqref{EqRecursiveMain}). Fix $m$ such that $R^{-m}>2$. Then for infinitely many $k\in\mathbb N$ we have $\eta_k/\eta_{k+m}\geqslant 2$. For such $k$ from \eqref{EqZetakFromBelow} we obtain $C\zeta_k\geqslant \eta_{m+1}>0$. This shows that $\inf\zeta_k>0$ and finishes the proof.
\end{proof}

In the remaining case for the behavior of $\eta_n/\zeta_n$ we obtain the following.

\begin{Prop}\label{NoWildProp1} Assume that $\eta_n/\zeta_n$ is bounded from above. Then $\eta_n\to 0$ and there is no wild attractor.
\end{Prop}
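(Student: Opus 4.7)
The plan is to mimic the tail of the argument from Proposition \ref{PropWild} in order to derive $\eta_n \to 0$ from the hypothesis, and then conclude the non-existence of a wild attractor exactly as in the last paragraph of Proposition \ref{NoWildProp}.

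First I would observe that since the renormalization intervals are nested ($T_{n+1} \subset T_n$), the sets $X_n$ are nested as well, hence $\eta_n$ is non-increasing and converges to some $\eta \geqslant 0$. Writing $K = \sup_n \eta_n/\zeta_n < \infty$ for the hypothesized upper bound, we have $\zeta_n \geqslant \eta_n/K \geqslant \eta/K$ for every $n$.

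The core step is to show $\eta = 0$ by contradiction. Suppose $\eta > 0$. Rewriting the right-hand side of the recursive estimate \eqref{EqRecursiveMain} as in the proof of Proposition \ref{PropWild}, we get
\begin{equation*}
C^{-1}\zeta_{n,m} \leqslant \frac{\eta_n\eta_{m+1}}{\eta_{n+m}} - \eta_{m+1}.
\end{equation*}
Recall that $\zeta_{n,m} \nearrow \zeta_n$ as $m\to\infty$ and, since $\eta_k \to \eta$, both $\eta_{m+1}$ and $\eta_{n+m}$ tend to $\eta > 0$. Passing to the limit in $m$ for fixed $n$ yields $C^{-1}\zeta_n \leqslant \eta_n - \eta$. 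Letting now $n\to\infty$, the right-hand side tends to $0$, so $\zeta_n \to 0$. This contradicts $\zeta_n \geqslant \eta/K > 0$, so we must have $\eta = 0$, i.e.\ $\eta_n \to 0$.

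Finally, $\eta_n \to 0$ gives $|\bigcap_{k} X_k| = 0$. As argued at the end of the proof of Proposition \ref{NoWildProp} (and Lemma \ref{LmXnIsUnion}), almost every point of $T_1 \cup J_1$ either has an orbit accumulating arbitrarily close to the critical point (hence lies in $\bigcap_k X_k$) or is escaping (hence does not lie in the basin of $\cC$); consequently $B(\cC)$ has zero Lebesgue measure and $F$ has no wild attractor.

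The argument has no real obstacle: the heavy lifting is all absorbed in Theorem \ref{RecursiveThm} and the Koebe-type distortion control behind it. The only new ingredient relative to Propositions \ref{PropWild} and \ref{NoWildProp} is the elementary observation that a uniform upper bound on $\eta_n/\zeta_n$ is incompatible with the conclusion $\zeta_n \to 0$ that the limiting argument produces whenever $\eta > 0$, which is what forces $\eta_n$ all the way to $0$ in this intermediate regime.
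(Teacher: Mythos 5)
Your proof is correct and follows essentially the same route as the paper: you take the same limit in the right-hand inequality of \eqref{EqRecursiveMain} (first $m\to\infty$, then $n\to\infty$), and the contradiction you reach ($\zeta_n\to 0$ versus $\zeta_n\geq\eta/K>0$) is just an algebraic rearrangement of the paper's contradiction $\eta+C^{-1}\zeta\leq\eta$ with $\zeta>0$.
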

\begin{proof} Assume that $\eta_n$ does not converge to zero when $n\to\infty$. Since $\eta_n$ is non-increasing there exists $\eta=\lim\limits_{n\to\infty}\eta_n>0$. Given that $\eta_n/\zeta_n$ is bounded from above and $\zeta_n$ is non-increasing we obtain that there exists a limit $\zeta=\lim\limits_{n\to\infty}\zeta_n>0$. Taking limit in \eqref{EqRecursiveMain} first when $m\to\infty$ and then when $n\to\infty$ we obtain:
$$\eta\leqslant\frac{\eta^2}{\eta+C^{-1}\zeta}.$$ Thus, $\eta+C^{-1}\zeta\leqslant \eta$. This contradiction finishes the proof.
\end{proof}
\begin{Prop}\label{NoWildProp1} Assume that $\eta_n/\zeta_n$ is bounded from below. Then $\zeta_n\to 0$.
\end{Prop}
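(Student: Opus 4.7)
The plan is to argue by contradiction, in the same spirit as the preceding proposition but with the roles of $\eta_n$ and $\zeta_n$ interchanged. Suppose, contrary to the claim, that $\zeta_n\not\to 0$. Then $\limsup_n \zeta_n=:\delta > 0$, and we may extract a subsequence $n_k\to\infty$ with $\zeta_{n_k} \geqslant \delta/2$. By the hypothesis $\eta_n \geqslant c\,\zeta_n$ for some $c>0$, this gives $\eta_{n_k} \geqslant c\delta/2$. Since $\eta_n$ is non-increasing (the inclusions $T_{n+1}\subset T_n$ force $X_{n+1}\subset X_n$), the bound $\eta_n \geqslant c\delta/2$ propagates to all $n$. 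In particular, $\eta := \lim_n \eta_n$ exists and is strictly positive.

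Next, I would rearrange the right-hand inequality of \eqref{EqRecursiveMain} into the equivalent form
$$
C^{-1}\,\zeta_{n,m}\,\eta_{n+m} \;\leqslant\; \eta_{m+1}\bigl(\eta_n-\eta_{n+m}\bigr),
$$
fix $n$, and let $m\to\infty$. Using $\eta_{n+m},\eta_{m+1}\to\eta>0$ together with the monotone convergence $\zeta_{n,m}\nearrow \zeta_n$ (already invoked in the proof of Proposition \ref{NoWildProp}), and then dividing through by $\eta$, I obtain the pointwise bound
$$
C^{-1}\zeta_n \;\leqslant\; \eta_n-\eta\qquad\text{for every } n.
$$

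Finally, evaluating this bound along the subsequence $n_k$: the right-hand side $\eta_{n_k}-\eta\to 0$ while the left-hand side stays bounded below by $C^{-1}\delta/2>0$, a contradiction. The only delicate ingredient is the limit $\zeta_{n,m}\to\zeta_n$ as $m\to\infty$ at fixed $n$; this was implicit in Proposition \ref{NoWildProp} and reflects the fact that a point of $Z_n$ has a finite pre-escape orbit under $F_{n-1}$ whose intermediate intervals, having positive length, eventually fail to sit inside the shrinking central window $T_{n+m}$ once $m$ is large. Beyond this, the argument is a routine extraction of limits from the recursive estimate, and I do not anticipate any substantial further obstacle.
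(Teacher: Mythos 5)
Your proof is correct and follows essentially the same route as the paper: show $\lim_n\eta_n=\eta>0$, then pass to the limit in the right-hand side of \eqref{EqRecursiveMain} (using $\zeta_{n,m}\nearrow\zeta_n$) to get $C^{-1}\zeta_n\leqslant\eta_n-\eta$, which forces $\zeta_n\to 0$ — exactly the argument the paper recycles from the second half of the proof of Proposition~\ref{PropWild}. The one small difference is cosmetic: the paper invokes monotonicity of $\zeta_n$ to conclude $\inf\zeta_n>0$ directly, while you extract a subsequence along which $\zeta_{n_k}\geqslant\delta/2$ and use monotonicity of $\eta_n$ alone to propagate $\inf\eta_n>0$; this actually avoids relying on the (asserted but not separately justified) monotonicity of $\zeta_n$, so it is a slightly more self-contained variant of the same proof.
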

\begin{proof}
Since $\zeta_n$ is non-increasing, if it does not converge to zero then $\inf\limits_{n\in\mathbb N}\zeta_n>0$ and therefore  $\inf\limits_{n\in\mathbb N}\eta_n>0$. However, from the arguments of the second part of the proof of Proposition \ref{PropWild} we have that $\inf\limits_{n\in\mathbb N}\eta_n>0$ implies $\zeta_n\to 0$ when $n\to\infty$. This contradiction completes the proof.
\end{proof}

\subsection{Estimating $\eta_n$ and $\zeta_n$}
 \begin{Prop} Modulo countable set of points, the set $T_n\setminus Z_n$ consists of the points $z\in T_n$ such that either 
 \\ $a)$ $F_{n-1}^j(z)\in T_n\cup J_n$ for all $j$ or 
 \\ $b)$ there exists $l$ such that $F^l(F_{n-1}^j(z))\in T_n$, where $j$ is the minimal number such that $F_{n-1}^j(z)\notin T_n\cup J_n$.
 \end{Prop}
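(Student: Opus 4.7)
The plan is to unpack the definition of $Z_n$ and then split the analysis according to the long-term behavior of the $F_{n-1}$-orbit of $z$. Recall $z\in Z_n$ iff $F_{n-1}^k(z)\in Y_n$ for some $k\ge 0$, and $y\in Y_n$ means $y\in T_n$ with $F^m(y)\notin T_n$ for every $m\ge 1$. Let $T_i$ denote the cumulative number of $F$-iterates so that $F_{n-1}^i(z)=F^{T_i}(z)$. Throughout I will use the first-return property of the prerenormalization: the intermediate $F$-iterates $F^{T_{i-1}+1}(z),\ldots,F^{T_i-1}(z)$ lie outside $T_n\cup J_n$. The conditions (a) and (b) of the proposition correspond to the mutually exclusive and exhaustive dichotomy whether the $F_{n-1}$-orbit of $z$ stays in $T_n\cup J_n$ for every $j$, or first escapes at some minimal step $j$.

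First I would handle case (b). Let $j\ge 1$ be the minimal escape time. Since $Y_n\subset T_n$, any $k$ with $F_{n-1}^k(z)\in Y_n$ must satisfy $0\le k<j$. If such $k$ exists with $y=F_{n-1}^k(z)\in Y_n$, then for every $l\ge 0$ the iterate $F^l(F_{n-1}^j(z))=F^{T_j-T_k+l}(y)$ has positive $F$-exponent and therefore misses $T_n$, and so the criterion in (b) of the proposition fails. Conversely, if that criterion fails, take the largest $k\in\{0,\ldots,j-1\}$ with $F_{n-1}^k(z)\in T_n$ (well defined since $k=0$ is admissible) and set $y=F_{n-1}^k(z)$; by maximality the intermediate iterates $F_{n-1}^i(z)$ with $k<i<j$ lie in $J_n$, the intermediate $F$-iterates lie outside $T_n\cup J_n$ by the first-return property, and $F^l(F_{n-1}^j(z))\notin T_n$ for all $l\ge 0$ by assumption. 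Combining these yields $F^m(y)\notin T_n$ for every $m\ge 1$, so $y\in Y_n$ and $z\in Z_n$. Thus on case (b), the proposition's condition (b) is equivalent to $z\notin Z_n$.

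Next I would handle case (a). Suppose $F_{n-1}^k(z)=y\in Y_n$ for some $k$. The condition $F^m(y)\notin T_n$ for $m\ge 1$, combined with $F_{n-1}^{k+i}(z)\in T_n\cup J_n$, forces $F_{n-1}^{k+i}(z)\in J_n$ for every $i\ge 1$, so the tail of the orbit is trapped in $J_n$ under the diffeomorphism $F_{n-1}|_{J_n}\colon J_n\to T_{n-1}$. The set of trapped points is $\bigcap_{m\ge 0}(F_{n-1}|_{J_n})^{-m}(J_n)$; using the expansion of $F_{n-1}|_{J_n}$ onto the strictly larger $T_{n-1}$ together with the absence of wandering intervals for smooth unimodal maps in class $\cA$, this trapped set reduces to a countable collection of preperiodic points, and its $F_{n-1}$-backward saturate in $T_n$ is therefore countable. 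Off this exceptional set, case (a) implies $z\notin Z_n$.

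The main technical obstacle is precisely the countability claim in case (a): a priori an expanding diffeomorphism can harbor an uncountable hyperbolic Cantor invariant set, so reducing the trapped set to preperiodic points genuinely uses Fibonacci-specific input about the dynamics of $F_{n-1}|_{J_n}$. Should that argument prove subtle, one may weaken ``countable'' to ``Lebesgue-null,'' which is what the subsequent measure-theoretic applications of the proposition ultimately need. Combining the analyses of cases (a) and (b) then yields the characterization of $T_n\setminus Z_n$ stated in the proposition.
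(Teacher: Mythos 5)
Your overall strategy — split according to whether the $F_{n-1}$-orbit of $z$ remains in $T_n\cup J_n$ forever or escapes at a minimal time $j$ — is the same dichotomy as the paper's, and your analysis of the escaping case (showing that, after escape, condition $(b)$ is equivalent to $z\notin Z_n$ by tracking the last visit to $T_n$ via the first-return property) matches the content of the paper's argument for $Z_n\cap S_n$ in case $(b)$ and for $T_n\setminus(Z_n\cup S_n)$.

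The place where you diverge, and where your argument is weaker than it needs to be, is the non-escaping case. You worry that the trapped set
$\bigcap_{m\ge 0}(F_{n-1}|_{J_n})^{-m}(J_n)$
could a priori be an uncountable hyperbolic Cantor set, and you invoke "absence of wandering intervals" to exclude this. Neither the worry nor the cited tool is quite right here. Since $F_{n-1}|_{J_n}$ is a \emph{single} monotone diffeomorphic branch onto $T_{n-1}$, each $(F_{n-1}|_{J_n})^{-m}(J_n)$ is a subinterval of $J_n$, and they are nested; so the trapped set is an interval (possibly degenerate), never a Cantor set. Cantor repellers arise only for maps with several expanding branches. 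Moreover, "absence of wandering intervals" concerns non-recurrent intervals with pairwise disjoint forward iterates and says nothing directly about this nested intersection. What actually forces the interval to degenerate to a single point is the negative Schwarzian derivative of $F_{n-1}$ (verified by the paper in Lemma \ref{KoebeConst}): by Singer's theorem a map with negative Schwarzian and no critical points in $J_n$, whose boundary escapes $J_n$, cannot have an attracting or neutral periodic orbit inside $J_n$; hence the only periodic point is the unique repelling fixed (or period-two) point, and an invariant non-degenerate subinterval would force such a non-repelling orbit, a contradiction. This is exactly what the paper asserts when it concludes that $F_{n-1}^{k+1}(z)$ must be \emph{the} repelling fixed point of $F_{n-1}$ in $J_n$, after which countability of $\bigcup_k F_{n-1}^{-(k+1)}(\{\text{fixed point}\})$ is immediate. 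Replacing your hand-waving at this step by the Schwarzian/Singer argument closes the gap and also gives you the full strength "countable" rather than your fallback "Lebesgue-null."
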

 \begin{proof} Let $S_n$ be the set of points $z\in T_n$ satisfying either $a)$ or $b)$. Let $z\in Z_n$. Then by definition $F_{n-1}^k(z)\in Y_n$ for some $k\geqslant 0$.

   Assume that $z$ satisfies $a)$. From the definition of $Y_n$ we obtain that $F^l(F^k_{n-1}(z))\notin T_n$ for all $l>0$. It follows that $F_{n-1}^{k+i} \in J_n$ for all $i>0$. Therefore, $F_{n-1}^{k+1}(z)$ is the unique repelling fixed point of $F_{n-1}$ in $J_n$.

 Assume now that $z$ satisfies $b)$. Clearly, $j\geqslant k$. Then $F^l(F_{n-1}^j(z))\in T_n$ contradicts to $F_{n-1}^k(z)\in Y_n$. Thus, $Z_n$ intersects $S_n$ by at most countable set of points.

 Let $z \in T_n\setminus(Z_n\cup S_n)$. Since $z$ does not satisfy $a)$ there exists a maximal number $j$ such that $F_{n-1}^j(z)\in T_n\cup J_n$. Let $k\leqslant j$ be the maximal number such that $F_{n-1}^k(z)\in T_n$. Since $z\notin Z_n$ we have $F_{n-1}^k(z)\notin Y_n$. Therefore, for some $l$ one has $F^l(F_{n-1}^k(z))\in T_n$. Since $F_{n-1}$ is the first return map from $T_n\cup J_n$ to $T_{n-1}$, there exists $p>k$ such that $F^l(F_{n-1}^k(z)=F_{n-1}^p(z) \in T_n$. Since by maximality of $k$ and $j$, for all $k< m \le j$, $F^m_{n-1}(z) \in J_n$, and additionally, $F^{j+1}_{n-1} \notin T_n \cup J_n$, we must have $p>j+1$. Thus, there exists $l_1>0$ such that $F^p_{n-1}(z)=F^{l_1}(F^{j+1}_{n-1}(z)) \in T_n$, and $z$ satisfies $b)$. Therefore  $T_n \setminus(Z_n\cup S_n)$ is empty.  This completes the proof of the proposition.

 \end{proof} 

 Thus, one can estimate $Z_n$ as follows.

\begin{alg}\label{alg1} Given a point $z\in T_n$ apply repeatedly $F_{n-1}$ until large number $N$ times. If $F_{n-1}^j(z)\in T_n\cup J_n$ for all $j\leqslant N $ then discard $z$.

\vspace{1mm}
Otherwise, let $j$ be the minimal number such that $w=F_{n-1}^j(z)\notin T_n \cup J_n$. Apply $F$  repeatedly until large number $L$ times. Suppose $u=F^{l} (w) \notin T_{1} \cup J_{1}$ for some $l \leqslant L$, and suppose that such $l$ is minimal. Then the length of a segment $A(z)$ equal to the connected component of $F^{-j}_{n-1}(F^{-l}(T_0 \setminus (T_1 \cup J_1)))$, containing $z$, is in $Z_n$.

Union of such segments is a lower boud on $Z_n$. Since taking inverses (through a Newton) is computationally prohibitive, in reality, one subdiveds $T_n$ into subintervals $I_n^r$, and checks if the above construction is valid with $z$ relaced by $I^r_n$. Then $I^r_n$ is i $Z_n$. 
\end{alg}

\vspace{2mm}

\begin{alg}\label{alg2}
Alternatively, if $F^{l}(w) \in T_n$ for some $1 \le l \le L$,  then the segment $B(z)$ equal to the connected component  of   $F_{n-1}^{-j}(F^{-l}(T_n)) \cap T_n$ containing $z$,  belongs to $T_n\setminus Z_n$. Union of the segments of the form $B(z)$ gives a lower bound on $T_n\setminus Z_n$.
\end{alg}

\section{Real renormalization} \label{real_ren}

Denote $\cD^\omega(A,B)$ the Banach space of orientation-preserving $C^\omega$-diffeomorphisms of a closed interval $A$ onto a closed interval $B$. If $A=B=[-1,1]$, this space will be simply denoted as $\cD^\omega$. 

We consider a Fibonacci map $F=(g,f)$ on $J \cup T$ given by
\begin{equation}
F(x)= \left\{ \psi(s^{-1}_J(x)), \quad x \in J,  \atop \phi(p_v(s_T^{-1}(x))), x \in T, \right.
\end{equation}
whenever $g$ is orientation-preserving, and
\begin{equation}
F(x)= \left\{ -\psi(s^{-1}_J(x)), \quad x \in J,  \atop \phi(p_v(s_T^{-1}(x))), x \in T, \right.
\end{equation}
whenever $g$ is orientation-reversing. Here $J=[i,j]$ and $T=[-t,t]$ are two closed disjoint subintervals of $[-1,1]$, $s_I/r_I$  denotes an orientation-preserving/-reversing affine map of $I$ onto $[-1,1]$, $(\psi,\phi) \in \cD^\omega \times \cD^\omega$, and $p_v$ is the  ``standard'' power family of degree $d$, normalized so that $p_v(\pm 1)=1$ and parameterized by its critical value:
\begin{equation}
p_v(x)=v+(1-v) |x|^d, \quad v \in [-1,1],
\end{equation}
Restrict $t$ to some interval $A=[\tau_1,\tau_2]$, $i$ to $B=[\iota_1,\iota_2]$ and $j$ to $C=[\zeta_1,\zeta_2]$, then the set of such maps $F$ is isomorphic to the product Banach space
  \begin{equation}
    \cB=[-1,1] \times A \times B \times C \times \cD^\omega \times \cD^\omega.
  \end{equation}

  We will endow the Banach space $\cB$ with the following $l_1$-norm:
  \begin{equation}
    \|F\|_1=|v|+|i|+|j|+\|\psi\|_1+\|\phi\|_1.
  \end{equation}

\subsection{Induced renormalization on $\cB$}
We will now describe how renormalization acts on each factor of the Banach space $\cB$. We denote
\begin{equation*}
  F \equiv (v, i, j, t,\psi, \phi),
\end{equation*}
and
\begin{equation}\label{ren5}
\tilde F =(\tilde v, \tilde i, \tilde j, \tilde t ,\tilde \psi, \tilde \phi)=\cR(v,i,j, t,\psi, \phi)=:\cR(F).
\end{equation}
Additionally, denote  $K =[k_1,k_2]$ - the preimage of $T$ under $\psi \circ s^{-1}_J \co \phi$ (or, under $-\psi \circ s^{-1}_J \co \phi$), and $\tilde J_\mp$ -  the left/right component of $(\phi \co p_v)^{-1}(T)$. Then the induced operator on the diffeomorphic factors $\cR: \cD^\omega \times \cD^\omega \mapsto  \cD^\omega \times \cD^\omega$ is given by
\begin{equation}
\label{tildepsiphi}  (\tilde \psi,\tilde \phi)=\left(r^{-1}_T \co \phi \co p_v \co s_{\tilde J_-}, s^{-1}_T \co \psi \co s^{-1}_J \co \phi \co s_K \right),
\end{equation}
if $g$ is orientation-preserving, and
\begin{align*}
  (\tilde \psi,\tilde \phi)&=\left(r^{-1}_T \co \phi \co p_v \co r_{\tilde J_+}, r^{-1}_T \co - \co \psi \co s^{-1}_J \co \phi \co s_K \right) \\
  &=\left(r^{-1}_T \co \phi \co p_v \co r_{\tilde J_+}, r^{-1}_T \co - \co \psi \co s^{-1}_J \co \phi \co s_K \right)
\end{align*}
if $g$ is orientation-reversing. Since $ p_v \co r_{\tilde J_+} = p_v \co s_{\tilde J_-}$, we get that the induced operator has period $1$;  denoting $\tilde J=[\tilde i, \tilde j]$,  the renormalized sextuple is given by  $(\ref{tildepsiphi})$ together with
\begin{align}
\label{tildev}  \tilde v&=s^{-1}_{K}(v),\\
\label{tildeJ}  \tilde J&=(\phi \co p_v)^{-1}(T)_-, \\
\label{tildet}  \tilde t&=p_v^{-1}(k_2), 
\end{align}
where  $(\phi \co p_v)^{-1}(T)_\pm$ denote right and left components of this preimage.

\subsection{Computation of the new renormalization intervals}
To compute new renormalization intervals, such as  $K$ and $\tilde J$, one needs to solve equations of the form
\begin{equation}
  f(x)-a=0.
\end{equation}
(e.g., $\tilde i$ is found with $f=\phi \co p_v$, $a=t$; similarly for other points). This is done as a fixed point problem for the Newton map
\begin{equation}
\label{N} N(x)=x-{f(x)-a \over f'(x_0)},
\end{equation}
where $x_0$ is some approximation of $x$.  Specifically, we use the following version of the Banach fixed point theorem to get bounds on $x$:

\begin{thm}({\it Contraction Mapping Principle})
Let $\cD$ be a Banach space. Suppose that the operator $N$ is well-defined and analytic as a map from  $\cN \subset \cD$ to $\cD$. Let $x_0\in \cN$ and $B_\delta(x_0) \subset \cN$ (an open ball of radius $\delta$ around $x_0$) be such that
$$\| D N[x]\| \le  D <1, $$
for any $x \in  B_\delta(x_0)$, and
$$\|N[x_0]-x_0\|\le \epsilon.$$
If $\epsilon < (1-D) \delta$ then the operator $N$ has a fixed point $x^*$ in $B_\delta(x_0)$, such that
$$\| x^*-x_0\| \le {\epsilon \over 1-D }.$$
\end{thm}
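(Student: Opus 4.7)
The plan is to prove this by the usual iteration scheme: define $x_{k+1}=N[x_k]$ starting from the given $x_0$, show by induction that every $x_k$ lies in $\overline{B_\delta(x_0)}$ so that $N$ is defined at each step, show that $(x_k)$ is Cauchy in the complete space $\cD$, and identify its limit as the desired fixed point. The key ingredient is that the analyticity of $N$ on $\cN$ together with the derivative bound $\|DN[x]\|\le D<1$ on the ball yields, via the mean value inequality in Banach spaces,
\begin{equation*}
\|N[x]-N[y]\|\le D\,\|x-y\|\quad\text{for all } x,y\in B_\delta(x_0).
\end{equation*}

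First I would verify by induction that $x_k\in B_\delta(x_0)$ for every $k\ge 0$. The base case is immediate. For the step, assuming $x_0,\dots,x_k\in B_\delta(x_0)$, the Lipschitz bound above gives
\begin{equation*}
\|x_{k+1}-x_k\|=\|N[x_k]-N[x_{k-1}]\|\le D\,\|x_k-x_{k-1}\|\le D^{k}\,\|N[x_0]-x_0\|\le D^k\epsilon,
\end{equation*}
so by the triangle inequality
\begin{equation*}
\|x_{k+1}-x_0\|\le\sum_{j=0}^{k}\|x_{j+1}-x_j\|\le\epsilon\sum_{j=0}^{k}D^{j}\le\frac{\epsilon}{1-D}<\delta,
\end{equation*}
using the hypothesis $\epsilon<(1-D)\delta$. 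This keeps the iteration inside $B_\delta(x_0)$ indefinitely.

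Next I would show $(x_k)$ is Cauchy: for $m>k$,
\begin{equation*}
\|x_m-x_k\|\le\sum_{j=k}^{m-1}D^{j}\epsilon\le\frac{D^{k}\epsilon}{1-D},
\end{equation*}
which tends to $0$. Since $\cD$ is a Banach space, $x_k\to x^*$ for some $x^*\in\overline{B_\delta(x_0)}\subset\cN$. Continuity of $N$ (which follows from analyticity and the derivative bound) gives $N[x^*]=\lim N[x_k]=\lim x_{k+1}=x^*$, so $x^*$ is a fixed point. Passing to the limit $m\to\infty$ in the Cauchy estimate with $k=0$ yields the quantitative bound $\|x^*-x_0\|\le\epsilon/(1-D)$.

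There is no real obstacle here since the statement is a textbook instance of the Banach fixed point theorem; the only small point requiring care is that $N$ is only assumed defined on $\cN$, so one must keep the iterates inside $B_\delta(x_0)\subset\cN$, which is precisely what the condition $\epsilon<(1-D)\delta$ guarantees. Uniqueness within $B_\delta(x_0)$, if needed, follows from the contraction inequality: two fixed points $x^*,y^*$ would satisfy $\|x^*-y^*\|=\|N[x^*]-N[y^*]\|\le D\|x^*-y^*\|$, forcing $x^*=y^*$.
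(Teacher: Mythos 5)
Your proof is the standard Banach fixed-point iteration argument, and it is correct; the paper itself gives no proof of this statement, simply citing it as a known version of the contraction mapping principle, so there is no ``paper route'' to compare against. One small imprecision worth flagging: you write that the limit $x^*$ lies in $\overline{B_\delta(x_0)}\subset\cN$, but the hypothesis only guarantees $B_\delta(x_0)\subset\cN$, not its closure. This is harmless here because the strict inequality $\epsilon<(1-D)\delta$ yields $\|x^*-x_0\|\le\epsilon/(1-D)<\delta$, so in fact $x^*$ lands in the open ball $B_\delta(x_0)\subset\cN$; you should appeal to that rather than to the closure being contained in $\cN$. With that adjustment the argument is complete.
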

For the choice of the operator $N$ as in $(\ref{N})$,
\begin{equation}
DN(x)=1-{f'(x) \over f'(x_0)}.
\end{equation}

\subsection{Bounds on the renormalization derivative $D \cR(F) h_1$} \label{der1}
We choose the following monomial basis in $\cB$:
\begin{align*}
  h_1 &= (1,0,0,0; 0, \ldots; 0, \ldots),\\
  h_2 &= (0,1,0,0; 0, \ldots; 0, \ldots),\\
  h_3 &= (0,0,1,0; 0, \ldots; 0, \ldots),\\
  h_4 &= (0,0,0,1; 0, \ldots; 0, \ldots),\\
  \eta_{k}&=(0,0,0,0; 0, \ldots, e_k, \ldots; 0,  \ldots), \\
  \varphi_{k}&=(0,0,0,0; 0, \ldots;  0, \ldots, e_k, \ldots), 
\end{align*}
where $e_k(x)=x^k$, and, first,  consider the action of the renormalization derivative on the vector  $h_1$. 

\vspace{1mm}

\noindent $1)$ We start by  computing $D \tilde v h_1$. Since $K$ is independent of $v$, we get
\begin{equation}
\tag{$D \tilde v h_1$}\label{Dvh1} \boxed{ D \tilde v h_1={2 \over k_2 - k_1}.}
\end{equation}

\vspace{1mm}

\noindent $2)$ We can use the expression $(\ref{tildeJ})$ to compute the respective derivatives of  $\tilde i$ and $\tilde j$:
\begin{align}
\tag{$D \tilde i h_1$}  \label{Dih1} &\boxed{ D \tilde i h_1 = {1-\phi^{-1}(t) \over d ({\phi^{-1}(t)-v})^{d-1 \over d} (1-v)^{d+1 \over d}},} \\
\tag{$D \tilde j h_1$} \label{Djh1} &\boxed{D \tilde j h_1  = {1-\phi^{-1}(-t) \over d ({\phi^{-1}(-t)-v})^{d-1 \over d} (1-v)^{d+1 \over d}}.}
\end{align}

\vspace{1mm}

\noindent $3)$ Since $\tilde t=p_v^{-1}(k_2)$,
\begin{equation}
\tag{$D \tilde t h_1$} \label{Dth1} \boxed{D \tilde t h_1={k_2-1  \over  d (k_2-v)^{d-1 \over d}  (1-v)^{d+1 \over d}}.}
\end{equation}

\vspace{1mm}

\noindent $4)$ We have, using $(\ref{tildepsiphi})$,
\begin{equation}
  \tag{$D \tilde \psi h_1$} \label{Dtpsih1}  \boxed{
\begin{array}{l}
  (D \tilde \psi h_1)(x) =-{\phi'(p_v(s_{\tilde J}(x))) \over \lambda_T} \left( 1-s_{\tilde J}(x)^d+\right.\\
\hspace{35mm} \left.  +d (1-v) s_{\tilde J}(x)^{d-1} (D s_{\tilde J} h_1)(x) \right),
\end{array}
  } 
\end{equation}
where
\begin{equation}
  \label{DsJh1} (D s_{\tilde J} h_1)(x) ={D \tilde j h_1 - D \tilde i h_1 \over 2} x +{D \tilde j h_1+ D \tilde i h_1 \over 2}.
\end{equation}
Furthermore, since $\tilde \phi$ is independent of $v$,
\begin{equation}
\tag{$D \tilde \phi h_1$} \label{Dphih1} \boxed{D \tilde \phi h_1=0.}
\end{equation}

\vspace{1mm}

\subsection{Bounds on the renormalization derivatives $D \cR(F) h_2$ and $D \cR(F) h_3$} \label{der2}  First, notice, that $\tilde v$, $\tilde J$ and $\tilde \psi$ are independent of $i$ and $j$. The remaining derivatives will be computed below.

\vspace{1mm}

\noindent $1)$ We have $\tilde t=p_v^{-1}(k_2)$, therefore
\begin{equation}
\tag{$D \tilde t h_2$} \label{Dth2} \boxed{D \tilde t h_2= { D k_2 h_2 \over d (1-v)^{1 \over d}  (k_2 -v)^{d-1 \over d} },} 
\end{equation}
where
\begin{align}
\label{Dk2h2}  D k_2 h_2&=(\phi^{-1})'(s_J(\psi^{-1}(t))) (D s_J h_2) \co \psi^{-1}(t), \\
\label{DsJh2}  (D s_J h_2)(x)&= {1-x \over 2}.
\end{align}
Similarly,
\begin{equation}
\tag{$D \tilde t h_3$} \label{Dth3} \boxed{D \tilde t h_3= { D k_2 h_3 \over d (1-v)^{1 \over d}  (k_2 -v)^{d-1 \over d} },} 
\end{equation}
where
\begin{align}
\label{Dk2h3}  D k_2 h_3&=(\phi^{-1})'(s_J(\psi^{-1}(t))) (D s_J h_3) \co \psi^{-1}(t), \\
\label{DsJh3}  (D s_J h_3)(x)&= {1+x \over 2}.
\end{align}

\vspace{1mm}

\noindent $2)$ Notice that $\tilde \psi$ is independent of $i$ and $j$.

\begin{equation}
\tag{$D \tilde \psi h_2$} \label{Dtpsi2} \boxed{D \tilde \psi h_2= 0}.
\end{equation}

\begin{equation}
\tag{$D \tilde \psi h_3$} \label{Dtpsi3} \boxed{D \tilde \psi h_3= 0}.
\end{equation}

\vspace{1mm}

\noindent $3)$ The last derivatives:

\begin{equation}
  \tag{$D \tilde \phi h_2$} \label{Dtphi2} \boxed{
\begin{array}{ l}
  D \tilde \phi h_2= {\psi'(s_J^{-1}(\phi(s_K(x)))) \over \lambda_T} \left(  D s_J^{-1}h_2(\phi(s_K(x))) +\right.\\
  \vspace{1mm}
  \hspace{25mm}+\left.{ \phi'(s_K(x))  \over \lambda_J }   \left({Dk_2h_2-Dk_1h_2 \over 2    }x+ {Dk_2h_2+Dk_1h_2 \over 2    }  \right) \right)
\end{array}
  }
\end{equation}
where
\begin{equation}
 (D s_J^{-1}h_2)(x)={2 x   \over (j-i)^2} - {2 j \over (j-i)^2},
\end{equation}
and $D k_2 h_2$, $D k_1 h_2$ are as in $(\ref{Dk2h2})$.

Similarly,
\begin{equation}
  \tag{$D \tilde \phi h_3$} \label{Dtphi3} \boxed{
\begin{array}{ l}
  D \tilde \phi h_3= {\psi'(s_J^{-1}(\phi(s_K(x)))) \over \lambda_T} \left(  D s_J^{-1}h_3(\phi(s_K(x))) +\right.\\
  \vspace{1mm}
  \hspace{25mm}+\left.{ \phi'(s_K(x))  \over \lambda_J }   \left({Dk_2h_3-Dk_1h_3 \over 2    }x+ {Dk_2h_3+Dk_1h_3 \over 2    }  \right) \right)
\end{array}
  }
\end{equation}
where
\begin{equation}
 (D s_J^{-1}h_3)(x)=-{2 x   \over (j-i)^2} + {2 i \over (j-i)^2},
\end{equation}
and $D k_2 h_3$, $D k_1 h_3$ are as in $(\ref{Dk2h3})$.

\vspace{1mm}

\subsection{Bounds on the renormalization derivative $D \cR(F) h_4$} \label{der3} $\phantom{aa}$\\

\vspace{1mm}

\noindent $1)$ Since $s^{-1}_K(v)=2/( k_2 - k_1) v -(k_2 + k_1)/(k_2-k_1)$,
\begin{equation}
\tag{$D \tilde v h_4$} \label{Dtvh4} \boxed{D \tilde v h_4=2{  k_1 D k_2 h_4-k_2 D k_1 h_4 -v (D k_2 h_4 - D k_1 h_4)  \over (k_2-k_1)^2 },}
\end{equation}
where
\begin{align}
  \label{Dk1h4}  D k_1 h_4 & = -\lambda_J (\phi^{-1})'(s_J(\psi^{-1}(-t)))  (\psi^{-1})'(-t), \\
  \label{Dk2h4}  D k_2 h_4 & = \lambda_J (\phi^{-1})'(s_J(\psi^{-1}(t)))   (\psi^{-1})'(t).
\end{align}

\vspace{1mm}

\noindent $2)$  The computation of  $D \tilde i h_4$ and  $D \tilde j h_4$ is straightforward:
\begin{align}
\tag{$D \tilde i h_4$ }  \label{Dih4}  & \boxed{D \tilde i h_4  =  -{(\phi^{-1})'(t)  \over d (1-v)^{1 \over d} (\phi^{-1}(t) -v)^{d-1 \over d} },} \\
\tag{$D \tilde j h_4$} \label{Djh4}  & \boxed{D \tilde j h_4  = {(\phi^{-1})'(-t) \over d (1-v)^{1 \over d} (\phi^{-1}(-t) -v)^{d-1 \over d} }.}
\end{align}

\vspace{1mm}

\noindent $3)$ Next,
\begin{equation}
\tag{$D \tilde t h_4$} \label{Dth4} \boxed{D \tilde t h_4={ D  k_2  h_4  \over d (1-v)^{1 \over d} (k_2-v)^{d - 1 \over d} }.}
\end{equation}

\vspace{1mm}

\noindent $4)$ Lastly,
\begin{equation}
\tag{$D \tilde \psi h_4$} \label{Dth4} \boxed{D \tilde \psi h_4=(D s_T^{-1} h_4) \hspace{-0.5mm} \co \hspace{-0.5mm} \phi \hspace{-0.5mm} \co \hspace{-0.5mm}  p_v \hspace{-0.5mm} \co \hspace{-0.5mm} s_{\tilde J} + {(\phi' \hspace{-0.5mm}  \co \hspace{-0.5mm}  p_v \hspace{-0.5mm} \co  \hspace{-0.5mm} s_{\tilde J}) \over t } p'_v \hspace{-0.5mm} \co \hspace{-0.5mm} s_{\tilde J} (D s_{\tilde J} h_4),}
\end{equation}
where
\begin{align*} 
(D s_T^{-1} h_4)(x)&=-{x \over t^2}, \\
(D s_{\tilde J} h_4)(x) &= {D \tilde j h_4 - D \tilde i h_4 \over 2} x +{D \tilde j h_4 +D \tilde i h_4 \over 2}.
\end{align*}

\vspace{1mm}

\subsection{Bounds on the renormalization derivative $D \tilde F \eta_k$} \label{der4} First,
\begin{align*}
  (\psi + \eps e_k )^{-1} & =\psi^{-1} \co (id + \eps e_k \co \psi^{-1})^{-1} +o(\eps) \\
  & =\psi^{-1} \co (id - \eps e_k \co \psi^{-1}) +o(\eps) \\
  &=\psi^{-1}- \eps (\psi ^{-1})' e_k \co \psi^{-1} + o(\eps).
\end{align*}
Notice that $\tilde J$ is independent of $\psi$, therefore,
\begin{align*}
\tag{$D \tilde i \eta_k$} &\boxed{ D \tilde i \eta_k  =0,} \\
\tag{$D \tilde j \eta_k$} &\boxed{  D \tilde j \eta_k  =0.}
\end{align*}
We proceed with the computation of the remaining derivatives

\vspace{1mm}

\noindent $1)$ According to the definition $(\ref{tildev})$:
\begin{equation}
\tag{$D \tilde v \eta_k $} \label{Dvh5} \boxed{ D \tilde v \eta_k=2{k_1 D k_2 \eta_k-k_2 D k_1 \eta_k -v (D k_2 \eta_k - D k_1 \eta_k)  \over (k_2-k_1)^2 },}
\end{equation}
and
\begin{align}
  \label{Dk1h5}  D k_1 \eta_k & = -\lambda_J (\phi^{-1})'(s_J(\psi^{-1}(-t)))  (\psi ^{-1})'(-t) e_k(\psi^{-1}(-t)), \\
  \label{Dk2h5}  D k_2 \eta_k & = -\lambda_J (\phi^{-1})'(s_J(\psi^{-1}(t)))  (\psi ^{-1})'(t) e_k(\psi^{-1}(t)).
\end{align}

\vspace{1mm}

\noindent $2)$ Next,
\begin{equation}
\tag{$D \tilde t \eta_k $} \label{Dth5} \boxed{ D \tilde t \eta_k= {D k_2 \eta_k \over d (1-v)^{1 \over d}  (k_2 -v)^{d-1 \over d} }.} 
\end{equation}

\vspace{1mm}

\noindent $3)$ Also,

\begin{equation}
\tag{$D \tilde \psi \eta_k$} \label{DtpsietaK} \boxed{ D \tilde \psi \eta_k=0,}
\end{equation}

\noindent $4)$ Finally,

\begin{equation}
\tag{$D \tilde \phi \eta_k$} \label{DtpsietaK} \boxed{ D \tilde \phi \eta_k={e_k \hspace{-0.5mm} \co \hspace{-0.5mm} s_J^{-1} \hspace{-0.5mm} \co \hspace{-0.5mm} \phi \hspace{-0.5mm} \co \hspace{-0.5mm} s_K  \over \lambda_T}+ {D s_K \eta_k  \over \lambda_T \lambda_J } (\psi' \hspace{-0.5mm} \co \hspace{-0.5mm} s_J^{-1} \hspace{-0.5mm} \co \hspace{-0.5mm} \phi \hspace{-0.5mm} \co \hspace{-0.5mm} s_K) (\phi' \hspace{-0.5mm} \co \hspace{-0.5mm} s_K),}
\end{equation}
where
\begin{equation}
(D s_K \eta_k)(x) ={D k_2 \eta_k -D k_1 \eta_k  \over 2     } x + {D k_2 \eta_k +D k_1 \eta_k  \over 2     }.
  \end{equation}

\vspace{1mm}

\subsection{Bounds on the renormalization derivative $D \tilde F \varphi_k$} $\phantom{a}$ \\ \label{der5}

\vspace{1mm}

\noindent $1)$ According to the definition $(\ref{tildev})$:
\begin{equation}
\tag{$D \tilde v \varphi_k$} \label{Dvh6} \boxed{ D \tilde v \varphi_k=2{k_1 D k_2 \varphi_k-k_2 D k_1 \varphi_k -v (D k_2 \varphi_k - D k_1 \varphi_k)  \over (k_2-k_1)^2 },}
\end{equation}
and
\begin{align}
  \label{Dk1h6}   D k_1 \varphi_k  & = -(\phi^{-1})'(s_J(\psi^{-1}(-t))) \varphi_k( \phi^{-1}(s_J(\psi^{-1}(-t)))),\\
  \label{Dk2h6}   D k_2 \varphi_k  & = -(\phi^{-1})'(s_J(\psi^{-1}(t))) \varphi_k( \phi^{-1}(s_J(\psi^{-1}(t)))).
\end{align}

\vspace{1mm}

\noindent $2)$ The next two derivtives:
\begin{align}
  \tag{$D \tilde i \varphi_k$}   \label{Dtih6} &\boxed{D \tilde i \varphi_k = -{(\phi^{-1})'(t) e_k(\phi^{-1}(t))  \over d (1-v)^{1 \over d}  (\phi^{-1}(t)-v)^{d-1 \over d} },} \\
\tag{$D \tilde j \varphi_k$} \label{Dtjh6} &\boxed{ D \tilde j \varphi_k = -{(\phi^{-1})'(-t) e_k(\phi^{-1}(-t))  \over d (1-v)^{1 \over d}  (\phi^{-1}(-t) -v)^{d-1 \over d}}. }
\end{align}

\vspace{1mm}

\noindent $3)$ Next derivative:
\begin{equation}
\tag{$D \tilde t \varphi_k$} \label{Dth6} \boxed{D \tilde t \varphi_k= {D k_2 \varphi_k \over d (1-v)^{1 \over d}  (k_2 -v)^{d-1 \over d}} .}
\end{equation}

\vspace{1mm}

\noindent $4)$ Finally,
\begin{align}
\tag{$D \tilde \psi \varphi_k$} \label{DtpsiphiK} \boxed{ D \tilde \psi \varphi_k=-{1 \over \lambda_T} \left( \varphi_K \hspace{-0.8mm}  \co  \hspace{-0.8mm} p_v  \hspace{-0.8mm} \co  \hspace{-0.8mm} s_{\tilde J} +(\phi'  \hspace{-0.8mm} \co  \hspace{-0.8mm} p_v \co  \hspace{-0.8mm} s_{\tilde J}) (p_v'  \hspace{-0.8mm} \co  \hspace{-0.8mm} s_{\tilde J}) D s_{\tilde J} \varphi_k \right),}
\end{align}
where
\begin{equation}
(D s_{\tilde J} \varphi_k)(x)={D \tilde j \varphi_k-D \tilde i \varphi_k  \over 2   } x+{D \tilde j \varphi_k+ D \tilde i \varphi_k \over 2}.  
\end{equation}

Furthermore,
\begin{equation}
\tag{$D \tilde \phi \varphi_k$} \label{DtphiphiK} \boxed{ D \tilde \phi \varphi_k={1 \over \lambda_T \lambda_J}  (\psi' \hspace{-0.8mm} \co  \hspace{-0.8mm} s_J^{-1}  \hspace{-0.8mm} \co  \hspace{-0.8mm} \phi  \hspace{-0.8mm} \co  \hspace{-0.8mm} s_K) \left( (\varphi_k  \hspace{-0.8mm} \co  \hspace{-0.8mm} s_K)+(\phi'  \hspace{-0.8mm} \co  \hspace{-0.8mm} s_K) D s_K \varphi_k \right)},
\end{equation}
where
\begin{equation}
(D s_K \varphi_k)(x)={D k_2 \varphi_k-D k_1 \varphi_k  \over 2   } x+{D k_2 \varphi_k+ D k_1 \varphi_k \over 2}.  
\end{equation}

\vspace{1mm}

\subsection{Interval arithmetics in $\R$, $C$ and $\cD^\omega$}

We will now give a very brief summary of interval operations in $\R$ and $\cD^\omega$. For a more complete treatise of ``standard sets'' and operations on them, an interested reader is referred to an excellent review \cite{KSW96}. 

A computer implementation of an arithmetic operation  $r_1 \# r_2$ ($\#$ is $+$, $-$, $*$ or $/$) on two real numbers does not generally yield an exact result. The ``computer'' result is a number representable in a standard  IEEE floating point format (cf \cite{IEEE}). Such numbers are commonly referred to as ``representable''. In the $64$-bit, or double, precision IEEE arithmetics, used in our proofs, a number is represented with $64$ bits of memory: $1$ bit for the sign of the number, $11$ bits for the exponent $e$, and $52$ bits, $m_0,m_1, \ldots, m_{51}$,  for a mantissa $m$. A real representable number $r$ is of a the form 
$$r= (-1)^{\rm sign} \left( 1+  \sum_{i=1}^{52} m_{52-i} \right) \ 2^{e-1023},$$ 
where the exponent $0 \le e \le 2047$ is defined by the state of the $11$ exponent bits (the state of $11$ ones is reserved to represent  ``overflows''). The set of all representable numbers will be denoted $\cR$.

Now, let $r_1 \# r_2$ be a mathematically legal, nonzero, arithmetic operation. The result of the true arithmetic operation  $r_1 \# r_2$ might not be a representable number. However, we can instruct the computer to attempt to round this operation either to the nearest representable number, up or down (our choice was to always round up). This might not be possible: the result of rounding up might have the exponent $e \ge  2047$ (overflow), or $e <0$ (underflow). In both cases the computer is instructed to raise an exception,  which is appropriately handled (either by terminating the program, or restarting with a different set of parameters). If, however, the result of rounding up is representable, the output of the computer implementation of the arithmetic operation is an upper bound on the true result of the operation. We will refer to such bound as ``the upper bound''.

We will define the {\it standard sets} in $\R$ to be the collection of all closed real intervals $I[x,y]=\{r \in \R: x \le r \le y; x,y \in \cR  \}$:  
\begin{equation}
{\rm std}{(\R)}=\{I[x,y] \in \R: x,y \in \cR \}.
\end{equation}  

If $I[x,y]$, $I[x_1,y_1]$ and $I[x_2,y_2]$ are in ${\rm std}{(\R)}$  then we can use the rounding up described above to obtain bounds on  the arithmetic operation on these sets as follows:

\begin{itemize}
\item[1)] {\it unary minus}:    $-(I[x,y])=I[-y,-x];$
\item[2)] {\it absolute value}: $|I[x,y]|=I[l,r]$, where  $l=\max\{ 0, x, -y \}$, $r=-\min\{0,x, -y\}$;
\item[3)] {\it addition}: $I[x_1,y_1]+ I[x_2,y_2]=I[x_3,y_3]$, where $y_3$ is the upper bound on $y_1+y_2$, while $-x_3$ is the upper bound on $-x_1+(-x_2)$;
\item[4)] {\it subtraction}: $I[x_1,y_1]- I[x_2,y_2] \equiv I[x_1,y_1]+ (-I[x_2,y_2])$;
\item[5)] {\it multiplication}: $I[x_1,y_1] \cdot I[x_2,y_2]=I[x_3,y_3]$, where $y_3$ is the maximum of the upper bounds on  $x_1  \cdot x_2$, $x_1 \cdot y_2$, $y_1  \cdot x_2$ and $y_1 \cdot y_2$, while $-x_3$ is the maximum of the upper bounds on $(-x_1)  \cdot x_2$, $(-x_1) \cdot y_2$, $(-y_1)  \cdot x_2$ and $(-y_1) \cdot y_2$; 
\item[6)] {\it inverse}: if $x_1 \cdot y_1 >  0$, then $I[x_1,y_1] \ {\rm inverse}=I[x_2,y_2]$, where $y_2$  is  the upper bound on $1/x_1$ and $-x_2$  is the  upper bound on $1/(-y_1)$;
\item[7)] {\it division}:  if $x_2 \cdot y_2 >  0$, then $I[x_1,y_1] / I[x_2,y_2] \equiv I[x_1,y_1] \cdot (I[x_2,y_2] \ {\rm inverse})$. 
\end{itemize}

Standard sets in $\C$ have been chosen to be disks in the complex plane of the form
\begin{equation}
{\rm std}{(\C)}=\{\D_{r}((a,b)) \in \C: a \in \cR, b \in \cR, r \in \cR \cap [0,\infty)  \}.
\end{equation}  
Arithmetic operations on ${\rm std}{(\C)}$ are straightforwardly reducible to arithmetic operations on three copies of ${\rm std}{(\R)}$.

Standard sets in $\cD^\omega$ are chosen to be of the form
\begin{equation}
{\rm std}{(\cD^\omega)}=\{ ({\rm std}(\R) \times \cR_+)^{N+1}: \cR_+=\cR \cap \{x \ge 0\},
\end{equation}
that is, $I$ is a standard set in $\R$, and $\cR_+$ is the set of non-negative representable numbers and $N$ is a non-negative integer. A fuction $\psi \in \cD^\omega$ can be represented as
\begin{equation}
\psi=\sum_{k=0}^N a_k x^k+g_k(x),
\end{equation}
where $g_k=O(x^k)$ and is real-analytic on $[-1,1]$, with the norm $\| g_k \|_1 \le  r_k$. Thus, each $(a_k, g_k)$ can be bounded by some $I \times r_k \in  {\rm std}(\R) \times \cR_+$, and therefore, elements of ${\rm std}{(\cD^\omega)}$ serve as bounds on elements of $\cD^\omega$.

The arithmetic operation on these standard sets in $\cD^\omega$ are reducible to those in ${\rm std}(\R)$, and operations on representable numbers.

The standard sets in ${\rm std}{(\cD^\omega)}$ will denoted by $\{D[I_0,r_0], \ldots,D[I_N,r_N]\}$.

\vspace{1mm}

\subsection{Rigorous bound on the renormalization fixed point through a Contraction Mapping Principle}

We will now outline a rather general method for finding a fixed point of a hyperbolic operator in a Banach space via its approximate Newton map. 

Let $\cR$ be an operator analytic and hyperbolic on some neighborhood $\cN$ in a Banach space $\cZ$. Suppose, that one knows its approximate hyperbolic fixed point  $Z_0 \in \cN$. Set
$$M \equiv \left[ \I-D_a \right]^{-1},$$
where $D_a$ is some finite-dimensional approximation of $D \cR[Z_0]$, and define, for all $z$, such that $Z_0+M z \in \cN$,
\begin{equation}
  \label{newtonOP} N[z]=z+\cR[Z_0+M z]-(Z_0+M z).
\end{equation}
Notice, that if $z^*$ is a fixed point of $N$, then $Z_0+M z^*$ is a fixed point of $C$.

The linear operator $\I-D_a$ is indeed invertible since $D \cR$ is hyperbolic at $Z_0$. If $Z_0$ is a reasonably good approximation of the true fixed point of $\cR$, then the operator $N$ is expected to be a strong contraction in a neighborhood of $0$:
\begin{eqnarray}
\nonumber D N[z] &=& \I +D \cR[Z_0+M z] \cdot M -M \\
\nonumber &=& \left[ M^{-1} +D \cR[Z_0+M z]  -\I \right] \cdot M \\
\nonumber &=& \left[\I-D_a  +D \cR[Z_0+M z]  -\I \right] \cdot M \\
\nonumber &=& \left[ D \cR[Z_0+M z] - D_a  \right] \cdot M.
\end{eqnarray}

The last expression is typically small in a small neighborhood of $0$, if the norm of $M$ is not too large (if $M$ is large, one might have to find a better approximation $Z_0$ and take a smaller neighborhood of $0$).

We have used the Contraction Mapping principle to obtain rigorous bounds on the renormalization fixed point in two cases: $d=4.0$ and $d=5.0$.

\begin{exthm} $\phantom{a}\\$

\noindent {\bf Case d=3.8}

\vspace{1mm}

  There exist two polynomials of degree $N=300$, $\psi_{3.8}$ and $\phi_{3.8}$,  such that the Newton operator $(\ref{newtonOP})$ for $\cR$, defined in subsection $\ref{real_ren}$, is a contraction of the ball $B_\delta(Q_a)=\{Q \in \cB: \| Q-Q_a\|_1 < \delta\}$, where $\delta=4.28577601155835506 \times 10^{-12}$, and
  \begin{align}
    \label{ga} Q_a=(&-0.83700583021901265,-0.89842138158302138,\\
    \nonumber &-0.64345222855602410,0.44908966263509253, \psi_{3.8},\phi_{3.8}),
    \end{align}
  with
  \begin{align*}
    \eps&=\|\cN(Q_a)-Q_a\|_1  \le 3.89968810858366172 \times 10^{-13} \\
    D&=\|D N \arrowvert_{B_\delta}  \|_1 \le 8.35798206185280828 \times 10^{-7}. 
    \end{align*}
In particular, $B_\delta(Q_a)$ contains a fixed point of the operator $N$.

\vspace{1mm}

\noindent {\bf Case d=5.1}

\vspace{1mm}

  There exist two polynomials of degree $N=300$, $\psi_{5.1}$ and $\phi_{5.1}$,   such that the Newton operator $(\ref{newtonOP})$ for $\cR$, defined in subsection $\ref{real_ren}$, is a contraction of the ball $B_\delta(H_a)=\{H \in \cB: \| H-H_a\|_1 < \delta\}$, where $\delta=3.34356454892135768 \times 10^{-12}$, and
  \begin{align}
    \label{ha} H_a=(&-0.89271858672458115,-0.93765644691994054,\\
    \nonumber &-0.68211126486048014,0.56526098770489580, \psi_{5.1},\phi_{5.1}),
    \end{align}
  with
  \begin{align*}
    \eps&=\|\cN(H_a)-H_a\|_1  \le 3.41452789569873455 \times 10^{-13} \\
    D&=\|D N \arrowvert_{B_\delta}  \|_1 \le 7.13468923563748925 \times 10^{-7}. 
    \end{align*}
In particular, $B_\delta(H_a)$ contains a fixed point of the operator $N$.

\end{exthm}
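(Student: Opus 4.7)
The plan is to implement a computer-assisted proof using the Contraction Mapping Principle stated earlier. We begin by computing a floating-point approximation $Q_a$ (resp.\ $H_a$) of the Fibonacci renormalization fixed point of $\cR$ in $\cB$, obtained by iterating $\cR$ numerically on a truncation of $\cB$ where the two $\cD^\omega$ components are replaced by polynomials of degree $N=300$. We then assemble a finite-dimensional matrix $D_a$ approximating the action of $D\cR[Q_a]$ on the subspace spanned by the four scalar directions and the truncated monomial families $\eta_0,\ldots,\eta_N$ and $\varphi_0,\ldots,\varphi_N$, reading its entries off from the explicit formulas in Subsections~\ref{der1}--\ref{der5}. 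We invert $\I-D_a$ (checked a posteriori with interval arithmetic) to form $M$ and define the Newton operator $N[z]=z+\cR[Q_a+Mz]-(Q_a+Mz)$ of $(\ref{newtonOP})$.

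The heart of the argument is the rigorous verification of the two inequalities in the Contraction Mapping Principle on the ball $B_\delta(0)$ with the $\delta$ specified in the statement. Using the interval-arithmetic infrastructure for $\R$, $\C$ and $\cD^\omega$ described in the paper (polynomial part of degree $\le N$ bounded coordinate-wise by standard intervals, plus a tail bounded in $\|\cdot\|_1$), we first bound
\[
\eps=\|N[0]\|_1=\|\cR[Q_a]-Q_a\|_1
\]
by rigorously applying $\cR$ to the interval enclosure of $Q_a$. This requires solving for the new renormalization breakpoints $K$ and $\tilde J$ via the Newton-in-$\R$ procedure of $(\ref{N})$, then pushing the enclosures through the formulas $(\ref{tildepsiphi})$--$(\ref{tildet})$.

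Next, we bound $\|DN[z]\|_1$ uniformly on $B_\delta(0)$ using the identity
\[
DN[z]=\bigl(D\cR[Q_a+Mz]-D_a\bigr)\cdot M.
\]
For each basis vector $h_1,h_2,h_3,h_4,\eta_k,\varphi_k$ with $k\le N$, we evaluate the boxed expressions of Subsections~\ref{der1}--\ref{der5} on interval enclosures of the set $Q_a+M\cdot B_\delta(0)$, subtract the corresponding column of $D_a$, compose with $M$ on the right, and take the $l_1$ column sum. The tail contribution from $k>N$ is controlled by an a priori Cauchy/Koebe-type estimate: postcomposition by the analytic data $\phi,\psi$ damps high Taylor modes by a Bernstein factor $\rho^N$ with some $\rho<1$ determined by the domain of analyticity, so the operator norm of $D\cR$ restricted to modes beyond $N=300$ is negligible against the target $D$. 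If the resulting upper bound satisfies $\eps<(1-D)\delta$, the Contraction Mapping Principle produces a fixed point of $N$ in $B_\delta(0)$, and hence a fixed point of $\cR$ in $B_{\|M\|\delta}(Q_a)$.

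The principal obstacle is the tail estimate combined with accumulated roundoff control. Because $\cB$ is infinite-dimensional, $D_a$ necessarily ignores modes past degree $N$, and the whole scheme works only if $D\cR$ contracts strongly on that tail \emph{and} if the interval-arithmetic bookkeeping of the two long polynomial factors does not blow up over the many compositions in $(\ref{tildepsiphi})$. Choosing $N=300$ is far inside the regime where the Bernstein damping $\rho^N$ is of the order of machine precision, so the delicate part is purely engineering: representing $\psi_{3.8},\phi_{3.8},\psi_{5.1},\phi_{5.1}$ with enough fidelity, organizing the compositions so that roundoff errors stay comfortably below the targets $\eps\sim 4\times 10^{-13}$ and $1-D\sim 1-8\times 10^{-7}$, and verifying nonvanishing of the denominators $(\phi^{-1}(\pm t)-v)$, $(k_2-v)$, $(j-i)$ and $t$ appearing throughout Subsections~\ref{der1}--\ref{der5} on the enclosure $Q_a+M\cdot B_\delta(0)$.
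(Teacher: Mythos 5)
Your overall strategy matches the paper's: take a numerical approximation $Q_a$ (resp.\ $H_a$) of the fixed point, form the approximate inverse $M=(\I-D_a)^{-1}$ from a finite-dimensional truncation of $D\cR$ read off Subsections~\ref{der1}--\ref{der5}, build the Newton operator $(\ref{newtonOP})$, and verify $\eps<(1-D)\delta$ by interval arithmetic on the standard sets introduced for $\R$ and $\cD^\omega$. Where you genuinely diverge from the paper is the treatment of the high-order tail of the derivative.

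You propose to control the modes beyond the truncation by an a~priori Cauchy/Bernstein estimate, arguing that postcomposition with the analytic data $\psi,\phi$ damps Taylor coefficients of order $k$ by a factor $\rho^{k}$ and invoking $\rho^{N}\ll 1$ at $N=300$. The paper does not do this. It instead pushes the \emph{unit ball of the tail space} directly through the derivative formulas by interval arithmetic: the standard set $\{\underbrace{D[0,0],\ldots,D[0,0]}_{n},D[0,1]\}$ with $n=250$ represents the bound $\|\eta\|_1\le 1$, $\eta\in\cT$ where $T\cD^\omega=\operatorname{span}\{e_0,\ldots,e_n\}\oplus\cT$, and the boxed formulas such as $(\ref{Dvh5})$, $(\ref{DtpsietaK})$, etc.\ are evaluated on this enclosure to produce a rigorous numerical bound on the tail columns of $D N$. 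This is automatic and requires no separate analyticity lemma; the damping is an \emph{output} of the computation, not an \emph{input}. Your version would require first proving a concrete $\rho<1$ for which the needed Bernstein-type bound holds in the $\|\cdot\|_1$ norm on $\cD^\omega$ used by the paper, and checking it is small enough relative to $\|M\|$; none of that is established in the text, and it is not obvious that such a $\rho$ is available uniformly over the enclosure $Q_a+MB_\delta$ without essentially doing the computation the paper does. Related to this, you conflate two different cutoffs: $N=300$ is the degree of the polynomial \emph{representation} of $\psi$ and $\phi$, while the finite-dimensional block of $D_a$ is taken only up to $k=250$, with $k>250$ lumped into the tail; so your ``$\rho^{N}$ at $N=300$'' is aimed at the wrong index. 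These are fixable, but as written the tail step of your argument is a gap relative to the paper, whose method of handling it is more concrete and self-contained.
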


\begin{proof}
  
\noindent   $1)$ A computer implementation of the renormalization operator reduces to implementations of compositions and algebraic operations on ${\rm std}(\cR)$ and ${\rm std}(\cD^\omega)$.

  In particular, $\eps$ is estimated via an implementation of $\cR$ on a standard set of the form $\{D[I_1[a_1,a_1],0], D[I_2[a_2,a_2],0], \ldots, D[I_N[a_N,a_N],0]\}$, where $a_k \in \cR$ and $N=300$.

\noindent   $2)$ The derivative $D N \arrowvert_{B_\delta}$ has been rigorously estimaed on the computer via a implementation of formulas in subsections $(\ref{der1})-(\ref{der5})$ over  standard sets in $\cB$. In particular, the derivatives $D \tilde v \eta_k$, $D \tilde i \eta_k$, $D \tilde j \eta_k$, $D \tilde t \eta_k$, $D \tilde \psi \eta_k$, $D \tilde \phi \eta_k$ and $D \tilde v \varphi_k$, $D \tilde i \varphi_k$, $D \tilde j \varphi_k$, $D \tilde t \varphi_k$, $D \tilde \psi \varphi_k$, $D \tilde \phi \varphi_k$ have neen computed for $k=0, \ldots, 250$.

  The derivative $D \tilde v \eta$ with $\eta \in \cT$, $\| \eta\|_1=1$, where $\cT$ is defined by $T \cD^\omega={\rm span}\{e_0,  \ldots, e_n\} \bigoplus \cT$, $n=250$, are estimated via an implementaton of (\ref{Dvh5})  on the standard set of the form
  $$\{\underbrace{D[0,0], \ldots, D[0,0]}_{n},D[0,1]\},$$
$n=250$. Actions of all other derivatives on higher-order terms are estimated in a similar way.
\end{proof}

\section{Wild attractors for Fibonacci maps} \label{attractors}

Throughout this Section we will use the symbol $Q$ for the fixed point $F$ in he case $d=3.8$, and $H$ for the fixed point $F$ in the case $d=5.1$.

We start by providing a computer estimate on the Schwarzian and the Koebe constant
\begin{lem}\label{KoebeConst} $\phantom{a}\\$

\noindent $1)$  The Schwarzian for the renormalization fixed point $Q$ is non-positive, and vanishes only at $x=0$.
  
  The Koebe constant from Corollary $\ref{CoKoebeDist}$ satisfies
\begin{equation}
\label{constC4}  C < 13.4664644314052974.
\end{equation}

\vspace{1mm}

\noindent $2)$   The Schwarzian for the renormalization fixed point $H$ is non-positive, and vanishes only at $x=0$.
  
  The Koebe constant from Corollary $\ref{CoKoebeDist}$ satisfies
\begin{equation}
\label{constC5}  C < 29.4431036985348317.
\end{equation}
\end{lem}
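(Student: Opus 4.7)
The plan is to deduce both claims from the rigorous enclosures of the renormalization fixed points $Q$ and $H$ obtained in the Existence Theorem, together with interval arithmetic applied to their polynomial representations and the geometric data from Lemma \ref{postcrit}.

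\medskip

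For the Schwarzian claim, I would use the chain rule
\[
S(f \circ g) = (Sf \circ g)\,(g')^2 + Sg
\]
to reduce to the atomic factors of $F$. Affine rescalings contribute $0$. The power-map factor $p_v(x)=v+(1-v)|x|^d$ has $Sp_v(x)=-(d^2-1)/(2x^2)<0$ for $x\neq 0$ and $d>1$, and in fact blows up to $-\infty$ at the critical point. For the diffeomorphic factors $\psi$ and $\phi$, I would compute $S\psi=\psi'''/\psi'-\tfrac{3}{2}(\psi''/\psi')^2$ in rigorous interval arithmetic on standard sets of the form ${\rm std}(\cD^\omega)$ described in the previous section, using the explicit polynomial enclosures and the analytic tail bounds from the Existence Theorem, and verify non-positivity after subdividing $[-1,1]$ into sufficiently many subintervals. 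Combining via the chain rule yields $SF\le 0$ on $T_1$, with equality only at the critical point $x=0$.

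\medskip

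For the Koebe constant, the extension domain $(p,q)$ for the inverse branches of $F^k|_A$ on a primitive or separated copy is dictated by Proposition \ref{PropKoebeSpace} and located explicitly by Lemma \ref{postcrit}: the endpoints are, up to transient cases for small $n$, of the form $(-\lambda^2)^{k} x_\ell$ with $\ell \in \{4,7,11,18\}$. Since $F$ is a period-$2$ renormalization fixed point and $T_n=\lambda^{n-1}T_1$, both $\dist(T_n,\{p,q\})$ and $|T_n|$ scale by the same power of $\lambda$ from one level to the next, so the ratios
\[
\tau_n := \min\{\dist(T_n,p),\,\dist(T_n,q)\}\big/|T_n|
\]
are eventually periodic in $n$ with period $2$, and determined by $\lambda$ and the finitely many orbit points $x_4,x_7,x_{11},x_{18}$. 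Combined with the non-positive Schwarzian from the first part, this makes each inverse branch $\phi$ univalent with negative Schwarzian on $(p,q)$, and the standard Koebe distortion lemma for negative-Schwarzian maps then produces a distortion bound depending only on $\inf_n \tau_n$. Evaluating that bound in interval arithmetic with the rigorous enclosures of $\lambda$ and $x_4,x_7,x_{11},x_{18}$ coming from the fixed points $Q$ and $H$ yields the numerical constants $(\ref{constC4})$ and $(\ref{constC5})$; the two different numerical values simply reflect the larger critical degree (and hence weaker Koebe space) for $d=5.1$.

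\medskip

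The main obstacle is sharpness of the Schwarzian enclosure. Unlike $p_v$, the diffeomorphic factors $\psi$ and $\phi$ have Schwarzian of small absolute value, so both the polynomial part of the enclosure and the analytic tail bound produced by the Existence Theorem must be sharp enough for the interval enclosure of $S\psi$ and $S\phi$ to remain strictly non-positive on all of $[-1,1]$ — this is precisely why the polynomial degree $N=300$ was chosen. A secondary issue is that $\tau_n$ depends on the parity of $n$ and on which endpoint of $T_n$ is closer to the nearest postcritical point, so the infimum entering the Koebe bound requires a short finite case analysis across the two parities and the first few transient levels before the self-similar regime kicks in.
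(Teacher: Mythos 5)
Your overall architecture matches the paper's: verify negativity of the Schwarzian by rigorous interval computation using the polynomial enclosures from the Existence Theorem, then feed the postcritical Koebe space from Lemma~\ref{postcrit} into the classical distortion bound to get the explicit constants. The Koebe-space step is essentially identical in spirit (the paper takes $\tau$ to be the Koebe space of $T_1$ inside $[x_4,-\lambda^{-1}x_4]$ and applies $C<(1+\tau)^2/\tau^2$; your period-$2$ bookkeeping of $\tau_n$ is just a more explicit account of the self-similarity the paper uses implicitly).

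Where you genuinely diverge is the Schwarzian verification. The paper does \emph{not} decompose via the chain rule $S(f\circ g)=(Sf\circ g)(g')^2+Sg$; it checks the sign of $SF$ as a whole, via the normalized numerator
$$\frac{F'''(x)\,F'(x)-\tfrac{3}{2}F''(x)^2}{|x|^{2d-4}}$$
on a fine partition of $J_1\cup T_1$, with the weight $|x|^{2d-4}$ absorbing the $-(d^2-1)/(2x^2)$ singularity of $Sp_v$ so that the quantity being enclosed is finite across $x=0$. Your decomposition requires verifying $S\phi\leqslant 0$ separately, which is a strictly stronger condition than what is needed (and what the paper checks): on $T_1$ the power-map term contributes a large negative quantity, so the total Schwarzian $Sf$ can be negative even if $S\phi$ were to change sign on part of $[v,1]$. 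Numerically $S\phi$ does appear negative for these fixed points, so your route likely succeeds here, but it commits you to a claim the problem does not force, and a direct enclosure of $SF$ is both cheaper and more robust. Note also that your check of $S\psi\leqslant 0$ on $[-1,1]$ is, after conjugation by the affine map $s_J$, precisely the paper's check on $J_1$, so that half of your decomposition coincides with the paper's test. Finally, be careful with ``each inverse branch univalent with negative Schwarzian'': the inverse of a negative-Schwarzian diffeomorphism has \emph{positive} Schwarzian; the distortion bound $((1+\tau)/\tau)^2$ is the one that applies to inverse branches of maps with $SF\leqslant 0$ given Koebe space around the image, and that is what both you and the paper invoke, but the phrasing should not attribute negative Schwarzian to the inverse branch itself.
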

\begin{proof}
  The non-positivity of the Schwarzian is verified with interval arithmetics by brut force by checking sign of the relation
  \begin{equation}
   { F'''(x) F'(x) -{3 \over 2} F''(x) \over    |x|^{2 d -4} },
  \end{equation}
with $F=G$, $d=3.1$ or $F=H$, $d=5.1$,    over a fine interval partition of the domain $J_1 \cup T_1$.
     By the classical Koebe distortion principle,
     \begin{equation}
        \label{constC1}       C < { (1+\tau)^2 \over \tau^2},
      \end{equation}
whenever the Schwarzian is negative, where $\tau$ is the Koebe space of the interval $T_1$ inside $[x_4, -\lambda^{-1} x_4]$.
\end{proof}

\begin{rem}
The programs for the computation of $\zeta_n$ and $\eta_n$ in the following two subsections have been parallelized and run on $28$ Intel(R) Xeon(R) CPU E5-2620 runnig at 2.00GHz with 6 cores each. 
\end{rem}

\subsection{Non-existence of wild attractors for $d = 3.8$}

The specific value $(\ref{constC4})$ of the Koebe constant is used in the following

\begin{thm}\label{nonexistence}
The Fibonacci renormalization cycle does not admit a wild attractor for $d=3.8$.
\end{thm}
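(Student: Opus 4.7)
The plan is to invoke Proposition~\ref{NoWildProp}: it suffices to exhibit a single integer $n$ for which a rigorously verified inequality
\[
  \eta_n/\zeta_n < C^{-1}
\]
holds, where $C < 13.4664644314052974$ is the Koebe constant furnished by Lemma~\ref{KoebeConst}. Thus $C^{-1} \approx 0.0743$ is the target ratio to fall below. All of the remaining work is computer-assisted verification of this inequality for a suitably chosen $n$.

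The first step is to replace the abstract Fibonacci renormalization cycle by a concrete object accessible to interval arithmetic. Using the Case $d=3.8$ existence theorem, the genuine fixed point $Q$ lies in the $\delta$-ball about the explicit approximation $Q_a$ of $(\ref{ga})$; evaluating $F$, $G$, and their prerenormalizations $F_k(z)=\pm\lambda^k f(z/\lambda^k)$ via interval arithmetic on enclosures of this ball produces outer approximations of the true dynamics. Every subsequent estimate on $\eta_n$ and $\zeta_n$ is then expressed as a sum of Lebesgue measures of standard-set enclosures in ${\rm std}(\R)$ and remains rigorous.

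Next, Algorithm~\ref{alg2} is used to upper-bound $\eta_n = |X_n|/|T_1|$: partition $T_1$ into a fine grid, iterate $F$ forward on each subinterval within an iterate budget, and sum the lengths of those subintervals whose forward orbit can be verified to enter $T_n$, thereby obtaining $|X_n|$ from above. Complementarily, Algorithm~\ref{alg1} lower-bounds $\zeta_n = |Z_n|/|T_n|$: partition $T_n$ and identify subintervals whose $F_{n-1}$-orbit provably escapes $T_n \cup J_n$ and whose subsequent $F$-iterate escapes $T_1 \cup J_1$; by construction such subintervals lie in $Z_n$, and their total length provides a rigorous lower bound on $|Z_n|$.

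The principal obstacle is interval-arithmetic overestimation in long orbit computations. For $\zeta_n$ this is acute: one iterate of $F_{n-1}$ corresponds to $u_n$ iterates of $F$, and naively propagated ranges quickly lose precision after passing near the critical turning point. The remedy is adaptive subdivision whenever the image of an enclosure grows past a threshold, so that each resulting piece can be assigned unambiguously to the symbolic cells $T_n$, $J_n$, or their complement, and the escape conditions of Algorithms~\ref{alg1}--\ref{alg2} can be decided. A secondary concern is choosing $n$ large enough that $\eta_n$ is genuinely small while $\zeta_n$ remains verifiable with acceptable error; Proposition~\ref{NoWildProp} itself predicts that in the no-wild-attractor regime $\zeta_n$ is bounded away from zero, so moderate $n$ should suffice. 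Once the verified inequality $\eta_n/\zeta_n < C^{-1}$ is obtained for a concrete $n$, Proposition~\ref{NoWildProp} gives $B(\mathcal{C})$ zero Lebesgue measure, completing the proof.
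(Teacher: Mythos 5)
Your overall strategy is the same as the paper's: invoke Proposition~\ref{NoWildProp}, fix a concrete $n$, rigorously enclose the $d=3.8$ renormalization fixed point via the Newton/Contraction-Mapping result, bound the Koebe constant $C$ via Lemma~\ref{KoebeConst}, and then verify $\eta_n/\zeta_n < C^{-1}$ with interval arithmetic. The description of the $\zeta_n$ lower bound via Algorithm~\ref{alg1} is also correct. But there is a genuine error in the $\eta_n$ step.

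You claim to upper-bound $\eta_n=|X_n|/|T_1|$ by \emph{summing the lengths of the subintervals whose forward orbit can be verified to enter $T_n$}. A subinterval $I$ with $Q^k(I)\subset T_n$ is \emph{contained} in $X_n$, so this sum is a \emph{lower} bound on $|X_n|$, not an upper bound. Since Proposition~\ref{NoWildProp} needs $\eta_n$ from above, a lower bound on $\eta_n$ cannot certify $\eta_n/\zeta_n<C^{-1}$ and the argument as stated does not close. To bound $|X_n|$ from above one must instead count every subinterval that \emph{cannot be verified to lie outside} $X_n$ (e.g., a subinterval is safely excluded only if it provably escapes $T_1\cup J_1$ before ever entering $T_n$); the paper's Part~2 of the proof of Theorem~\ref{nonexistence} counts the intervals whose $N$-orbit remains in $T_1\cup J_1$, which is the conservative (non-excluded) set. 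Relatedly, you attribute the $\eta_n$ estimate to Algorithm~\ref{alg2}; in the paper that algorithm serves a different purpose — it lower-bounds $T_n\setminus Z_n$ (equivalently, upper-bounds $\zeta_n$) and is used in the proof of Theorem~\ref{existence} for $d=5.1$, not in the $\eta_n$ estimate for $d=3.8$. The adaptive-subdivision and interval-arithmetic caveats you raise are accurate and match the paper's implementation details, but the direction error in the $\eta_n$ bound is the critical gap.
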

\begin{proof}
  The relation $\eta_n/\zeta_n < C^{-1}$ has been verified for the fixed point $Q$. Specifically, for $n=9$,
  \begin{eqnarray}
    \label{etan}    \eta_n  &<&0.034670807835335466, \\
    \label{zetan}   \zeta_n &>&0.620936555002023912. 
  \end{eqnarray}

  \bigskip
  
  \noindent $1)$ {\it Estimating $\zeta_n$ from below (Algorithm $(\ref{alg1})$).} An interval $I$ from a partition of $T_1$ is iterated until time $Z=50000$. We check if $Q^k(I) \not \subset T_1 \cup J_1$, for some $k \le Z$. If this holds, the interval is recorded. Otherwise, if  $Q^k(I) \cap \partial (T_1 \cup J_1) \neq \emptyset$, the interval $I$ is partitioned in two subintervals of equal length, and the same is checked for the two subintervals. Subdivision is performed no more than a finite fixed number of times.

Set $I_n=\lambda^{n-1} I \subset T_n$. If $K=Q^k(I) \not \subset T_1 \cup J_1$ indeed, for some $k \le N$, then $K_n=\lambda^{n-1} K =Q_{n-1}^k(I_n) \not \subset T_n \cup J_n$.

Such interval $K_n$ is iterated upto $X=50000$: if for some $l \le X$, $Q^l(K_n) \notin T_1 \cup J_1$ and $Q^j(K_n) \notin T_n$ and $T_n \notin Q^j(K_n)$  for all $0 \le j <l$, then $I_n$ is counted in the lower bound of the zet $Z_n$.

If  for some $l \le L$, $Q^l(K_n) \cap \partial (T_1 \cup J_1) \neq \emptyset$ or  $Q^j(K_n) \cap \partial T_n \neq \emptyset$, then the interval $I$ is subdivided and the whole procedure is restarted. The subdivision of an interval $I$ may happen only a fixed finite number of times. If, after this finite number of times, one of the smaller subintervals (also referred to as  $I$) still satisfies $Q^l(K_n) \cap \partial (T_1 \cup J_1) \neq \emptyset$ or  $Q^j(K_n) \cap \partial T_n \neq \emptyset$, then such $I$ is not counted in the lower bound on $Z_n$.

The algorithm described above is implemented in {\tt{zeta\_below}}.

\vspace{1mm}

\noindent $2)$ {\it Estimating $\eta_n$ from above.} All intervals $I$ in a partition of $T_1$, such that $Q^k(I) \subset T_1 \cup J_1$ for all $k \le N$, $N=50000$  are counted in an upper bound on the set $X_n$. 
\end{proof}

\subsection{Existence of wild attractors for $d = 5.1$}

The specific value $(\ref{constC5})$ of the Koebe constant is used in the following

\begin{thm}\label{existence}
The Fibonacci renormalizations cycle admits a wild attractor for $d = 5.1$.
\end{thm}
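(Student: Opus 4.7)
The plan is to mirror the argument of Theorem \ref{nonexistence}, but this time invoking Proposition \ref{PropWild} in place of Proposition \ref{NoWildProp}. That is, it suffices to exhibit a single index $n$ for which
\begin{equation*}
\eta_n / \zeta_n > C,
\end{equation*}
where $C < 29.4431036985348317$ is the Koebe constant furnished by part $(2)$ of Lemma \ref{KoebeConst} at the fixed point $H$. Once such an $n$ is found, Proposition \ref{PropWild} immediately gives $\inf_{n} \eta_n > 0$, $\zeta_n \to 0$, and hence the basin $B(\cC)$ has positive Lebesgue measure, which is the definition of a wild attractor in the present setting.

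The core computational task is therefore to produce a rigorous \emph{lower} bound on $\eta_n$ and a rigorous \emph{upper} bound on $\zeta_n$ at some depth $n$, using interval arithmetics on top of the enclosure of $H$ coming from the Existence of a renormalization fixed point theorem. For the upper bound on $\zeta_n$, I would use Algorithm \ref{alg2}: starting from a partition of $T_n$ into small intervals $I_n^r$, iterate each $I_n^r$ under $F_{n-1}$ until it leaves $T_n \cup J_n$, then continue under $F$; any $I_n^r$ that can be shown (in interval arithmetic, avoiding boundary collisions of $T_n$ and $T_1 \cup J_1$) to reenter $T_n$ contributes to a lower bound on $T_n \setminus Z_n$, giving an upper bound on $\zeta_n = |Z_n|/|T_n|$. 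Intervals where the itinerary straddles a relevant boundary are subdivided up to a fixed depth and discarded if still ambiguous; this only worsens the lower bound on $T_n \setminus Z_n$, preserving rigor of the upper bound on $\zeta_n$. For the lower bound on $\eta_n$, I would symmetrically enumerate disjoint subintervals $I \subset T_1$ that can be verified to satisfy $F^k(I) \subset T_n$ for some $k$, summing their lengths.

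The principal obstacle is quantitative: since $C \approx 29.44$ at $d=5.1$ is considerably larger than $C^{-1}$ at $d=3.8$, one needs $\eta_n$ to dominate $\zeta_n$ by almost a factor of $30$. This forces the computation to a depth where $\zeta_n$ has decayed substantially while $\eta_n$ remains essentially constant, so the partitions of $T_1$ and $T_n$ must be sufficiently fine and the iteration horizon $N$ sufficiently large. In addition, the inductive/scaling relations $T_{n+1} = \lambda^n T_1$ and $f_n(z) = \pm \lambda^n f(z/\lambda^n)$ must be implemented with interval bounds that account for the enclosure radius $\delta$ of $H$ inside $B_\delta(H_a)$ from the Existence theorem; propagating this uncertainty through many compositions of $F$ is what controls the feasible depth $n$.

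Putting these pieces together, the proof reduces to producing numerically certified inequalities of the form
\begin{equation*}
\eta_n > \alpha, \qquad \zeta_n < \beta, \qquad \alpha / \beta > C,
\end{equation*}
for concrete constants $\alpha, \beta$ at some explicit $n$. Since $(\lambda^{n-1}) X_{m+1} \subset X_n$ and all quantities involved are defined combinatorially from the dynamics of $H$, the verification is an algorithmic computer-assisted check analogous to the one implemented by \texttt{zeta\_below} for $d=3.8$, with the roles of ``escaping'' and ``non-escaping'' exchanged. After the computer run produces the above inequalities, the conclusion follows directly from Proposition \ref{PropWild} and the bound on $C$ supplied by Lemma \ref{KoebeConst}(2).
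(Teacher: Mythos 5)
Your proposal is correct and follows essentially the same strategy as the paper: verify $\eta_n/\zeta_n > C$ at the period-$2$ point $H$ for a single explicit level $n$ (the paper uses $n=4$, obtaining $\eta_4 > 0.5238\ldots$ and $\zeta_4 < 0.0163\ldots$), then invoke Proposition \ref{PropWild} (equivalently part $(3)$ of Theorem~A) and the Koebe constant of Lemma \ref{KoebeConst}$(2)$ to conclude that $B(\cC)$ has positive Lebesgue measure. The one place where your realization of Algorithm \ref{alg2} differs from what the paper actually implements is the direction of iteration used for the upper bound on $\zeta_n$: you partition $T_n$ and push forward under $F_{n-1}$ and then $F$, discarding intervals whose itinerary straddles a boundary, whereas the paper instead pulls $T_n$ \emph{back} under $F$ (to depth $18$) and then under $F_{n-1}$ (to depth $11$), taking connected components of the resulting preimage system inside $T_n$ as the certified lower bound on $|T_n\setminus Z_n|$. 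Both are rigorous implementations of Algorithm \ref{alg2} and rely on the same characterization of $T_n\setminus Z_n$, but the backward/preimage route has the practical advantage that preimage intervals contract rather than expand, so interval-arithmetic enclosures and boundary-collision handling degrade much more slowly — which matters here, since one must overcome the full factor $C\approx 29.44$ rather than merely a $C^{-1}$ as in the $d=3.8$ case. Modulo the computer run that supplies the concrete constants $\alpha$, $\beta$ at $n=4$, your argument is complete.
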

\begin{proof}
  The relation $\eta_n/\zeta_n > C$ has been verified for the fixed point $H$. Specifically, for $n=4$,
  \begin{eqnarray}
    \label{etan}    \eta_n  &>&0.52383224533154263, \\
    \label{zetan}   \zeta_n &<&0.01634562784562387. 
  \end{eqnarray}

  \bigskip
  
  \noindent $1)$ {\it Estimating $\zeta_n$ from above (Algorithm $(\ref{alg2})$).} First, we take preimages of the interval $T_n$ down to level $N=18$ (routines {\tt{preimages\_zeta}} and {\tt{preimages\_zeta\_next}}). 

  Next, preimages under $Q_{n-1}$ are taken downto level $K=11$. All connected components of the intersection of these preimages with $T_n$ are counted in a lower bound on the set $T_n\setminus Z_n$.
  
\vspace{1mm}

\noindent $2)$ {\it Estimating $\eta_n$ from below.}  All intervals $I$ in a partition of $T_1$, such that $H^k(I) \subset T_n$ for some $k \le N$, $N=50000$, is counted in a lower bound on the set $X_n$.

\end{proof}

\begin{rem}
  Since the renormalization fixed point is locally continous with respect to $d$, we also obtain that non-existence of wild attr
  actors for $d$ in a neighborhood of $d=3.8$, and existence in a neighborhood of $d=5.1$. 
\end{rem}

\section{Appendix: Description of the programs}

The programs used for he rigorous estimates have been written in the programming language ADA.

\subsection{Upper level shell-scripts and executables} $\phantom{a}\\$

\noindent {\bf Script {\path{run_eta}}.} \newline
The shell script {\path{run_eta}} reads the file {\path{hosts}}, checks if each host from this list is ssh-accesible, and writes the accesible ones into the file {\path{work_hosts}}. After that it acceses every hosts from {\path{work_hosts}} via ssh and starts there, in the background,  $r$ copies of the executable {\path{eta}}, closing the ssh connection after that. $i$-th copy of {\path{eta}}  outputs into {\path{output_eta.D.M.(L+1).i}}. The parameters supplied to the executable {\path{eta}} are
\begin{itemize}
\item[$\bullet$] $D$, the degree of the power map;
\item[$\bullet$] $L$, $L+1$ is the renormalization level $n$ in $J_n \cup T_n$;
\item[$\bullet$] $M$, defines the size of the intervals $I$ in the partition of $T_1$ used to find $\eta_n$, specifically $|I|=|T_n|/M$; 
\item[$\bullet$] $N$, maximum iteration time of each interval $I$  (see Part $2)$ of Theorem $\ref{nonexistence}$);
\item[$\bullet$] $F$, the refinement size cutoff for an interval $I$ is  $F \cdot |I|$  (see Part $2)$ of Theorem $\ref{nonexistence}$);
\item[$\bullet$] $m$, total number of executed copies  $m=r \cdot \!$  \#\{{\path{work_hosts}}\};
\item[$\bullet$] $i$, $i$-th copy of {\path{eta}} is executed. The $i$-th copy iterates all intervals $I_j$ with $(i-1) \lfloor p/m \rfloor+1 \le j \le i \lfloor p/m \rfloor$,  where $p$ is the total number of executed copies and $p$ is the cardinality of the partion $I_j$. 
\end{itemize}

\vspace{2mm}

\noindent {\bf Script {\path{run_zeta_below}}.} \newline
The shell script {\path{run_zeta_below}} reads the file {\path{hosts}}, checks if each host from this list is ssh-accesible, and writes the accesible ones into the file {\path{work_hosts}}. After that it acceses every hosts from {\path{work_hosts}} via ssh and starts there, in the background,  $r$ copies of the executable {\path{zeta_below}}, closing the ssh connection after that. $i$-th copy of {\path{zeta_below}}  outputs into {\path{output_zeta_below.D.M.(L+1).i}}. The parameters supplied to the executable {\path{zeta_below}} are
\begin{itemize}
\item[$\bullet$] $D$, the degree of the power map;
\item[$\bullet$] $L$, $L+1$ is the renormalization level $n$ in $J_n \cup T_n$;
\item[$\bullet$] $M$, defines the size of the intervals $I$ in the partition of $T_1$ used to find $\eta_n$, specifically $|I|=|T_n|/M$; 
\item[$\bullet$] $Z$, maximum iteration time $Z$ of each interval $I$  (see Part $1)$ of Theorem $\ref{nonexistence}$);
\item[$\bullet$] $X$, maximum iteration time $X$ of each interval $K_n$  (see Part $1)$ of Theorem $\ref{nonexistence}$);
\item[$\bullet$] $F1$, used in the verification  of $G^k(I) \cap \partial (T_1 \cup J_1) \neq \emptyset$ (see Part $1)$ of Theorem $\ref{nonexistence}$), the refinement size cutoff for an interval $I$ is  $F1 \cdot |I|$;
\item[$\bullet$] $F2$, used in the verification  of $F^l(K_n) \cap \partial (T_1 \cup J_1) \neq \emptyset$ or  $F^j(K_n) \cap \partial T_n \neq \emptyset$  (see Part $1)$ of Theorem $\ref{nonexistence}$),  the refinement size cutoff for an interval $I$ is  $F2 \cdot |I|$;
\item[$\bullet$] $m$, total number of executed copies  $m=r \cdot \!$  \#\{{\path{work_hosts}}\};
\item[$\bullet$] $i$, $i$-th copy of {\path{eta}} is executed. The $i$-th copy iterates all intervals $I_j$ with $(i-1) \lfloor p/m \rfloor+1 \le j \le i \lfloor p/m \rfloor$,  where $p$ is the total number of executed copies and $p$ is the cardinality of the partion $I_j$. 
\end{itemize}

\vspace{2mm}

\noindent {\bf Executable {\path{preimages_zeta}}.} \newline
The executable  {\path{preimages_zeta}} computes preimages of $T_n$ upto certain depth level. The input is the renormalization data from the files {$\sim$\path{/Data/psi_d}}, {$\sim$\path{/Data/phi_d}} and {$\sim$\path{/Data/data_d}}.    The parameters supplied are
\begin{itemize}
\item[$\bullet$] $d$, the degree of the power map;
\item[$\bullet$] $l$, $l+1$ is the renormalization level $n$ in $J_n \cup T_n$;
\item[$\bullet$] $n$, depth of preimages of $T_n$ under $F$. 
\end{itemize}
The left and the right endpoints of the preimages are output into {$\sim$\path{/Data/leftpoints_zeta_d_(l+1)_0}} and  {$\sim$\path{/Data/rightpoints_zeta_d_(l+1)_0}}, respectively.

This executable also outputs the indices $i$ of the first and last interval in the deepest level, to the current output. 

\vspace{2mm}

\noindent {\bf Script {\path{run_preimages_zeta_next}}.} \newline
The shell script {\path{run_preimages_zeta_next}} reads the file {\path{hosts}}, checks if each host from this list is ssh-accesible, and writes the accesible ones into the file {\path{work_hosts}}. After that it acceses every hosts from {\path{work_hosts}} via ssh and starts there, in the background, $M$ copies of the executable {\path{preimages_zeta_next}}, closing the ssh connection after that. $i$-th copy of {\path{preimages_zeta_next}}  outputs into {\path{output_preimages_zeta_next.D.M.(L+1).i}}. $M$ is the number of intervals of the deepest level in the output of {\path{preimages_zeta}}, their endpoints having been saved in  {$\sim$\path{/Data/leftpoints_zeta_D_(L+1)_0}} and  $\sim${\path{Data/rightpoints_zeta_D_(L+1)_0}}. Thus, every copy of  {\path{preimages_zeta_next}} computes preimages of a single interval from the deepest level of the output of  {\path{preimages_zeta}}.

The parameters supplied to the executable {\path{preimages_zeta_next}} are
\begin{itemize}
\item[$\bullet$] $D$, the degree of the power map;
\item[$\bullet$] $L$, $L+1$ is the renormalization level $n$ in $J_n \cup T_n$;
\item[$\bullet$] $Q$, index of the first interval in the deepest level in the output of {\path{preimages_zeta}};
\item[$\bullet$] $P$, index of the last interval in the deepest level in the output of {\path{preimages_zeta}};
\item[$\bullet$] $M$, number of the intervals in the deepest level in the output of {\path{preimages_zeta}};
\item[$\bullet$] $K$, level of preimages in {\path{preimages_zeta_next}}.
\end{itemize}

\vspace{2mm}

\noindent {\bf Executable {\path{preimages_zeta_next}}.} \newline
The executable  {\path{preimages_zeta_next}} computes preimages of an interval of the deepest level in the output of  {\path{preimages_zeta}},  upto certain depth level. The input is the renormalization data from the files {$\sim$\path{/Data/psi_D}}, {$\sim$\path{/Data/phi_D}}, {$\sim$\path{/Data/data_D}} and a pair of left and right endpoints from {$\sim$\path{/Data/leftpoints_zeta_D_(L+1)_0}} and  {$\sim$\path{/Data/rightpoints_zeta_D_(L+1)_0}}.

The parameters supplied to this executable have been described under {\bf Script {\path{run_preimages_zeta_next}}}

This executable outputs left and right endpoints of the preimages of the $i$-th interval from the deepest level of the output of {\path{preimages_zeta}}  into {$\sim$\path{/Data/leftpoints_zeta_D_(L+1)_i}} and  {$\sim$\path{/Data/rightpoints_zeta_D_(L+1)_i}}.

\vspace{2mm}

\noindent {\bf Executable {\path{reduce_intervals}}.} \newline
The executable  {\path{reduce_intervals}} finds the union of all intervals with left and right endpoints from   {$\sim$\path{/Data/leftpoints_zeta_d_(l+1)_i}} and  {$\sim$\path{/Data/rightpoints_zeta_d_(l+1)_i}}. The input is the renormalization data from the files {$\sim$\path{/Data/psi_d}}, {$\sim$\path{/Data/phi_d}}, {$\sim$\path{/Data/data_d}} and pairs of left and right endpoints from {$\sim$\path{/Data/leftpoints_zeta_d_(l+1)_i}} and  {$\sim$\path{/Data/rightpoints_zeta_d_(l+1)_i}}. 
The parameters supplied to the executable {\path{reduce_intervals}} are
\begin{itemize}
\item[$\bullet$] $d$, the degree of the power map;
\item[$\bullet$] $l$, $l+1$ is the renormalization level $n$ in $J_n \cup T_n$;
\item[$\bullet$] $o$, number of files {$\sim$\path{/Data/leftpoints_zeta_d_(l+1)_i}}, or, equivalently, {$\sim$\path{/Data/rightpoints_zeta_d_(l+1)_i}}.
\end{itemize}

This executable outputs left and right endpoints of the intervals after taking into {$\sim$\path{/Data/lleftpoints_zeta_d_(l+1)_i}} and  {$\sim$\path{/Data/rrightpoints_zeta_d_(l+1)_i}}. 

\vspace{2mm}

\noindent {\bf Script {\path{run_ren_preimages_zeta_next}}.} \newline
The shell script {\path{run_ren_preimages_zeta_next}} reads the file {\path{hosts}}, checks if each host from this list is ssh-accesible, and writes the accesible ones into the file {\path{work_hosts}}. After that it acceses every hosts from {\path{work_hosts}} via ssh and starts there, in the background, $M+1$ copies of the executable {\path{preimages_ren_zeta_next}}, closing the ssh connection after that. $i$-th copy of {\path{preimages_ren_zeta_next}}  outputs into {\path{output_ren_preimages_zeta_next.D.M.(L+1).i}}. $M$ is the number of files {$\sim$\path{/Data/lleftpoints_zeta_D_(L+1)_i}}, or, equivalently,  {$\sim$\path{/Data/rrightpoints_zeta_D_(L+1)_i}}.

Every copy of  {\path{preimages_ren_zeta_next}} computes preimages under $F_{n-1}$ of every preimage of  $T_n$ under $F$.

The parameters supplied to the executable {\path{preimages_zeta_next}} are
\begin{itemize}
\item[$\bullet$] $D$, the degree of the power map;
\item[$\bullet$] $L$, $L+1$ is the renormalization level $n$ in $J_n \cup T_n$;
\item[$\bullet$] $M$, number of the intervals in the deepest level in the output of {\path{preimages_zeta}};
\item[$\bullet$] $K$, deepest level of preimages under $F_{n-1}$  in {\path{preimages_ren_zeta_next}}.
\end{itemize}

\vspace{2mm}

\noindent {\bf Executable {\path{preimages_ren_zeta_next}}.} \newline
The executable  {\path{preimages_ren_zeta_next}} computes preimages under $F_{n-1}$  of all intervals with left and right endpoints from   {$\sim$\path{/Data/lleftpoints_zeta_D_(L+1)_i}} and  {$\sim$\path{/Data/rrightpoints_zeta_D_(L+1)_i}}, upto certain depth level. The input is the renormalization data from the files {$\sim$\path{/Data/psi_D}}, {$\sim$\path{/Data/phi_D}}, {$\sim$\path{/Data/data_D}} and pairs of left and right endpoints from {$\sim$\path{/Data/lleftpoints_zeta_D_(L+1)_i}} and  {$\sim$\path{/Data/rrightpoints_zeta_D_(L+1)_i}}.

The parameters supplied to this executable have been described under {\bf Script {\path{run_ren_preimages_zeta_next}}}.

This executable outputs left and right endpoints of the intersections preimages under $F_{n-1}$ with $T_n$  into {$\sim$\path{/Data/renorm_leftpoints_zeta_D_(L+1)_i}} and  {$\sim$\path{/Data/renorm_rightpoints_zeta_D_(L+1)_i}}.

\vspace{2mm}

\noindent {\bf Executable {\path{compute_zeta}}.} \newline
The executable  {\path{compute_zeta}} finds the union of all intervals whose end points are saved in {$\sim$\path{/Data/renorm_leftpoints_zeta_d_(l+1)_i}} and  {$\sim$\path{/Data/renorm_rightpoints_zeta_d_(l+1)_i}}, and computes $\zeta_n$.

The parameters supplied to the executable {\path{preimages_zeta_next}} are
\begin{itemize}
\item[$\bullet$] $d$, the degree of the power map;
\item[$\bullet$] $l$, $l+1$ is the renormalization level $n$ in $J_n \cup T_n$;
\item[$\bullet$] $o$, number of files {$\sim$\path{/Data/renorm_leftpoints_zeta_d_(l+1)_i}}, or equivalently,  {$\sim$\path{/Data/renorm_rightpoints_zeta_d_(l+1)_i}}, and computes $\zeta_n$;
\item[$\bullet$] $m$, deepest level of preimages under $F_{n-1}$  in {\path{preimages_ren_zeta_next}}.
\end{itemize}

\section{Open problems}
Since Conjecture \ref{conj_existence} remains open, it is not known yet whether any part of Theorem \ref{thm_Aliva-Lyubich} holds true unconditionally. In particular, the following questions remain open.
\begin{problem} Does there exist a Fibonacci map $f$ such that $\mathcal B(\overline{\mathcal P(f)})$ has $a)$ Hausdorff dimension less than 1, $b)$ Hausdorff dimension 1 but zero Lebesgue measure?
\end{problem}
\noindent An approach to show a positive answer to the previous problem is to clarify the relation between the behavior of $\eta_n,\zeta_n$ and Hausdorff dimension of the basin of attraction $B(\mathcal C)$ of periodic point of Fibonacci renormalization.
\begin{problem} Is it true that conditions of part $(1)$ of Theorem A imply that $B(\mathcal C)$ has Hausdorff dimension less than $1$? Do conditions of part $(2)$ imply that $B(\mathcal C)$ has Hausdorff dimension 1?
\end{problem} 
\bibliography{biblio}
\bibliographystyle{amsalpha}

\end{document}